\newtheorem{theorem}{Theorem}
\newtheorem{proposition}{Proposition}
\newtheorem{definition}{Definition}
\newtheorem{lemma}[proposition]{Lemma}
\newtheorem{example}{Example}
\newtheorem{corollary}[proposition]{Corollary}
\def\shf{\mathcal}     
\def\cshf{\mathfrak}   
\def\setsys{\mathfrak} 
\def\id{\textrm{id}}   
\newcommand{\pwr}{{\mathcal{P}}}  
\def\cat{\mathbf}      
\DeclareMathOperator{\vspan}{span} 
\newcommand{\blockpar}{\parindent 0pt \parskip 5pt}
\newcommand{\caj}[1]{{\color{red}{CAJ: #1}}}
\newcommand{\gal}[2]{#1 \hspace{-.1em} \mathop{:} \hspace{-.1em} #2}
\newcommand{\rawson}[1]{{\color{purple}{Michael Rawson: #1}}}
\newcommand{\tup}[1]{\left< #1 \right>}			
\newcommand{\func}[3]{#1 \colon #2 \rightarrow #3}	
\newcommand{\fig}[1]{Fig.~\ref{#1}}
\newcommand{\equ}[1]{(\ref{#1})}
\renewcommand{\sec}[1]{Sec.\ \ref{#1}}
\newcommand{\rem}[1]{}
\newcommand{\h}[1]{\mbox{#1}}
\newcommand{\cl}{\setsys{B}}
\newcommand{\st}{ \mathrel{\colon} }	
\newcommand{\sub}{\subseteq}		
\renewcommand{\int}{\cap}
\renewcommand{\(}[1]{\begin{equation} \label{#1}}
\renewcommand{\)}{\end{equation}}
\newcommand{\halffunc}[1]{ 
	\left\{ \begin{array}{ll} #1 \end{array} \right. 
}
\newcommand{\mypng}[3]{
	\begin{figure}[htbp]
	\begin{center}
	\includegraphics[scale=#1]{#2.png}
	\caption{#3}
	\label{#2}
	\end{center}
	\end{figure}
}
\title{The Topological Structures of the Orders of Hypergraphs}
\author[1,2]{Robert E.\ Green}
\author[1,3,*]{Cliff A.\ Joslyn}
\author[1]{Audun Myers} 
\author[4]{Michael G.\ Rawson}
\author[5]{Michael Robinson}
\affil[1]{Pacific Northwest National Laboratory, Complex Data Models}
\affil[2]{University of Pennsylvania, Mathematics}
\affil[3]{Binghamton University (SUNY), Systems Science and Industrial Engineering}
\affil[4]{Microsoft Corp.}
\affil[5]{American University, Math and Statistics}
\affil[*]{Corresponding author: cliff.joslyn@pnnl.gov, cajoslyn@binghamton.edu}
\date{April 2025}
\begin{document}

\maketitle


\begin{abstract}

We provide first a categorical exploration of, and then completion of the mapping of the relationships among, three fundamental perspectives on binary relations: as the incidence matrices of hypergraphs, as the formal contexts of concept lattices, and as specifying topological cosheaves of simplicial (Dowker) complexes on simplicial (Dowker) complexes. We provide an integrative, functorial framework combining previously known with  three new results: 1) given a binary relation, there are order isomorphisms among the bounded edge order of the intersection complexes of its dual hypergraphs and its concept lattice; 2) the concept lattice of a context is an isomorphism invariant of the Dowker cosheaf (of abstract simplicial complexes) of that context; and 3) a novel Dowker cosheaf (of chain complexes) of a relation  is an isomorphism invariant of the concept lattice of the context that generalizes Dowker's original homological result. We illustrate these concepts throughout with a running example, and demonstrate relationships to past results.

\end{abstract}

\tableofcontents


\section{Introduction}

In this paper we explore and demonstrate some novel and compelling functorial mappings among and interpretations of a collection of discrete mathematical structures. Stemming from the most simple and fundamental object of a binary relation, some critical relationships to high order graph and network structures, topological complexes, and order- and lattice-theoretical are well known. But we have discovered that that story is far from complete. 


To begin, some of the mathematical structures most significant for relational data science include graphs (taken as networks) and set systems (sets, or indexed families, or subsets, sometimes taken as lists of lists of items).
And recent years have seen an increasing emphasis on the value of generalizing graphs and networks to their high-order forms as  mathematical hypergraphs. In turn, hypergraphs provide not only a generalization of the connectivity of graphs from pairwise  to multi-way relations (every graph $G$ being a 2-uniform hypergraph), but also operate elegantly in a more general relational context, as follows. 

First, consider a (finite) undirected multi-graph $G = \tup{V,E}$, together with its incidence matrix $I(G)$, as a relational structure mapping rows as vertices $v \in V$ to columns as their incident undirected edges $e \in E$, each such edge $e \in {V \choose 2}$ being an unordered pair of vertices. Then $I(G)$ is highly constrained to have exactly two elements in each column, corresponding to the two vertices in each graph edge. But the incidence matrix $I(H)$ of a (finite) multi-hypergraph $H = \tup{V,E}$, where now for all $e \in E, e \in 2^V$, is an arbitrary (finite) Boolean matrix. As such, it is also the Boolean characteristic matrix of an arbitrary finite binary relation $R \sub V \times E$. In this way, hypergraphs provide a compelling interpretation of the most foundational discrete math structure, a general binary relation.

But the next part of the story is that hypergraphs $H$ also have natural {\em topological} interpretations, usually expressed in terms of abstract simplicial complexes (ASCs, also called Dowker complexes). ASCs are constructed from hypergraphs (below we will denote these as $DH(H)$) by closing them by subset inclusion, and their simplicial homologies have proven valuable for multiple purposes \cite{BeAAbR18}. Although this is the most natural way to turn a hypergraph into an ASC, there are a wide variety of approaches which have yielded useful homological invariants  \cite{purvine_Hgraph_homology}. 

Yet another connection between hypergraphs and topological spaces can be seen by considering the hyperedges themselves as a subbasis for a finite Alexandrov topology $T(H)$ on the vertices. Then open sets of $T(H)$ can be constructed by calculating all the unions of all the intersections of the hyperedges (everything here finite). This construction is natural on objects, but in a categorical context requires careful consideration of morphisms, making this issue a ripe target  for future work. 




But after admiring these various interpretations of objects determined by finite binary relations in terms of hypernetworks and finite topologies and complexes, we'd note that a {\em third} perspective on them has not been as heavily researched, and that is their interpretation in the context of ordered structures and lattices. Of course, every set system $E \sub 2^V$ is representable as a sublattice of the Boolean lattice $\tup{ 2^V, \sub }$ of the power set $2^V$, where hyperedges $e \in E$ are simply ordered by inclusion.
Also order complexes are important topological complexes that are built from the chains in this lattice \cite{priestley_1970}.
    It is also well known that the finite Alexandrov topologies $T(H)$ of hypergraphs (and thus of their incidence matrices $I(H)$) are bijective with both finite orders \cite{Barmak2011} and finite distributive lattices \cite{birkhoff}. 


But in this paper we are interested in a different, and, we think, more significant order theoretical interpretation and results for hypergraphs. Consider two hyperedges $e,e' \in E$ of a multi-hypergraph $H=\tup{V,E}$. Considering them within the edge order $\tup{E,\sub}$ can clearly only reveal information about their inclusion, whether $e \sub e', e' \sub e$, or they are non-comparable. But there are {\em two} ways that a pair of hyperedges can be noncomparable: they could be disjoint, with $e \cap e' = \emptyset$; or they could be ``properly intersecting'' with $e \cap e' \neq \emptyset$, but also both $e \setminus e' \neq \emptyset$ and $e' \setminus e \neq \emptyset$. 

In other words, to recover complete information about any pair of edges, in addition to noting their comparability by inclusion, we need to also calculate their intersection $e \cap e'$. It follows that to recover complete information about the entire hypergraph, we need to close it not by subset to form the Dowker complex $DH(H)$, and not by intersection {\em and} union to form the Alexandrov topology $T(H)$, but by intersections $e \cap e'$ {\em alone} to recover what we call the {\bf intersection complex} $H^\cap$. 

It turns out that this intersection complex object $H^\cap$ is well understood: it is a {\bf concept lattice} $\mathfrak{B}(H)$, which have been studied for quite a while in the sub-field of {\bf formal concept analysis (FCA)}. Concept lattices have been used in data science for decades in problems ranging from categorization to visualization. In recording the full intersection structure of a hypergraph, they also capture the Galois correspondence inherent in the  binary relation underlying the incidence matrix $I(H)$ of the hypergraph $H$, which is itself described as the lattice's {\bf formal context}.%
    \footnote{We note with regret some historical issues with notation: whereas we'd notate a multihypergraph $H=\tup{V,E,I}$, the tradition in FCA is to describe a formal context $C=\tup{G,M,I}$ for the identical structure. We will use each as appropriate, hopefully without confusion.}
    As will be detailed below, this Galois correspondence on the incidence matrix $I(H)$ relates certain pairs $K = \tup{A,B}$, called {\bf formal concepts}, of a set $A \sub V$ of vertices (rows) and a set $B \sub E$ of hyperedges (columns) in an adjoint order relation:  two such  pairs $K_1=\tup{A_1,B_1}$ and $K_2=\tup{A_2,B_2}$ are  ordered $K_1 \le K_2$ when $A_1 \sub A_2$ if and only if $B_1 \supseteq B_2$. In this way, we can make a concrete interpretation of hypergraphs as concept lattices, and {\it vice versa}. 

Where the edge order $\tup{E,\sub}$ relates hyperedges of a hypergraph $H$ order theoretically, the concept lattice $\mathfrak{B}(H)$ relates their intersections $e \cap e'$ in the intersection complex $H^\cap$ in terms of what we thereby call {\bf Galois pairs}, that is, pairs of sets we denote $\gal{A}{B}$ with $A \sub V, B \sub E$. The formal concepts $\tup{A,B}$ are thereby Galois pairs, but remarkably, such Galois structures also show up in other structures related to the hypergraph $H$, in particular its Dowker complex $DH(H)$. A functorial interpretation of the topological properties of Dowker complexes reveals their Galois structures via their sheaf representations. In particular, the cosheaf of a Dowker complex, which we thereby call a {\bf Dowker cosheaf}, codes the Galois relation of its corresponding concept lattice.



Cosheaves ascribe (usually algebraic) data to the open sets of a topological space, and thus we must ask about their origins from a simple hypergraph $H$. The answer is that
both the topology and the data arise as subsets, the open sets of the topology from sets of vertices $V$, and the data from sets of hyperedges $E$.  In other words, the Dowker complex $DH(H)$ builds a topology on $V$ while using the transpose of the incidence matrix $I$ builds a topology on $E$.  The Galois correspondence is then a correspondence between these two topologies, Galois pairs $\gal{A}{B}$ of these distinct open sets.  Fortunately, this correspondence is functorial, depending crucially upon the difference between formal contexts and relations, which is to say that any transformation between incidence matrices $I(H)$ that respects the presence of incidences yields continuous maps where appropriate.  

Adhering to a categorical perspective throughout our consideration of these varied perspectives of the same data also makes clear the importance of the morphisms in each category. The categories for binary relations, formal contexts, and multi-hypergraphs all have the same objects but different requirements on what it means to be a morphism. Morphisms, after all, are meant to preserve structure. The structures traditionally emphasized when thinking about these objects from different perspectives are different. For relations, a morphism requires that if two elements are related in the domain then they are also related in the codomain, for multi-hypergraphs a morphism requires hyperedges to be sent to hyperedges (just like edges to edges in graphs), and for contexts a morphism requires that concepts are sent to concepts.


We conclude by explicating the functorial connection between an abstract simplicial complex and its simplicial chain complex, which lifts chain complexes neatly into the costalks of the Dowker cosheaf.  This yields a new algebraic invariant for a hypergraph $H$ simultaneously for its concept lattice $\mathfrak{B}(H)$, its Dowker complex $DH(H)$, and its topology $T(H)$. 



This paper identifies as original contributions three new and related correspondences between concepts, concept lattices, the Dowker construction, hypergraphs, and cosheaves:

\begin{enumerate}

\item (Theorem \ref{thm:concept_iso_intclose} in Section \ref{sec:hypergraph_complexes}) For a context $C=(G,M,I)$,
  there are order isomorphisms among both of the bounded lattices of the intersection complexes of its dual hypergraphs (taking $C$ as an incidence matrix) and its concept lattice.

\item (Theorem \ref{thm:concept_dowker} in Section \ref{sec:concept_cosheaves}) The concept lattice of a context is an isomorphism invariant of the Dowker cosheaf (of abstract simplicial complexes) of that context.
  That is, if Dowker cosheaves for two contexts are isomorphic, then they correspond to isomorphic concept lattices.  Moreover, if the concepts in the concept lattice are labeled with Galois notation, then these data can recover the Dowker cosheaf. 
  
\item (Theorem \ref{thm:concept_iso_cosheaf_of} in Section \ref{sec:dowker_homology}) We define a novel Dowker cosheaf (of chain complexes) of a context, and show that it is an isomorphism invariant of the concept lattice of the context that generalizes Dowker's original homological result.
  
\end{enumerate}


\rem{

\begin{figure}
  \begin{center}
\includegraphics[width = 4in]{figures/overview}
\caption{A summary of the relationships discussed in this paper.}
\label{fig:overview}
  \end{center}
\end{figure}

}

Figure \ref{figures/categories} shows a functorial diagram representing the relationships between different categories discussed in the paper, which are introduced in \sec{categories} below. And \fig{figures/objects3} shows relationships between the different mathematical objects.




\begin{figure}
    \centering

\[\begin{tikzcd}
	&&& \begin{array}{c} \mathbf{Rel} \\ \hbox{Binary relations} \end{array} \\
	\begin{array}{c} \mathbf{MHyp} \\ \hbox{Multihypergraphs} \end{array} \\
	&&& \begin{array}{c} \mathbf{Ctx} \\ \hbox{Formal Contexts} \end{array} &&& \begin{array}{c} \mathbf{ConLat} \\ \hbox{Concept Lattices} \end{array} \\
	\\
	\begin{array}{c} \mathbf{Hyp} \\ \hbox{Hypergraphs} \end{array} &&& \begin{array}{c} \mathbf{CoShvASC} \\ \hbox{Dowker cosheaves} \end{array} \\
	\\
	&& \begin{array}{c} \mathbf{ASC} \\ \hbox{Abstract} \\ \hbox{simplicial} \\ \hbox{complexes} \end{array} & \begin{array}{c} \mathbf{CoShvKom} \\ \hbox{Cosheaf of} \\ \hbox{chain complexes} \end{array} \\
	\\
	&&& \begin{array}{c} \mathbf{CompLat} \\ \hbox{Complete lattices} \end{array}
	\arrow["{\hbox{Lem 1}}"{description}, hook, from=2-1, to=1-4]
	\arrow["\begin{array}{c} \hbox{Collapse} \\ \hbox{Lem 3} \end{array}"{description}, shift right=5, curve={height=6pt}, from=2-1, to=5-1]
	\arrow["{\hbox{Prop 11}}"{description}, hook, from=3-4, to=1-4]
	\arrow["{\hbox{Cor 9}}"{description}, hook', from=3-4, to=2-1]
	\arrow["\begin{array}{c} \frak{B} \\ \hbox{Cor 17} \end{array}"{description}, tail reversed, from=3-4, to=3-7]
	\arrow["{\hbox{Prop 13}}"{description}, from=3-4, to=5-1]
	\arrow["\begin{array}{c} CoShvRep \\ \hbox{Def 29} \end{array}"{description}, from=3-4, to=5-4]
	\arrow["{\hbox{Thm 2}}"{description}, hook', from=3-7, to=5-4]
	\arrow["{\hbox{Thm 3}}"{description}, from=3-7, to=7-4]
	\arrow["\begin{array}{c} \hbox{Underlying} \\ \hbox{complete lattice} \\ \hbox{Prop 18} \end{array}"{description, pos=0.4}, shift left=5, curve={height=-30pt}, from=3-7, to=9-4]
	\arrow["{\hbox{Lem 3}}"{description}, shift right=5, curve={height=6pt}, hook', from=5-1, to=2-1]
	\arrow["\begin{array}{c} DH \\ \hbox{(Dowker} \\ \hbox{complex)} \\ \hbox{Lem 5} \end{array}"{description}, shift left=3, curve={height=-24pt}, from=5-1, to=7-3]
	\arrow["{\hbox{Thm 1}}"{description}, shift right=4, curve={height=30pt}, from=5-1, to=9-4]
	\arrow["\begin{array}{c} \hbox{Base} \\ \hbox{Cor 25} \end{array}"{description}, from=5-4, to=7-3]
	\arrow["\begin{array}{c} C^\Delta \\ \hbox{Lem 39} \end{array}"{description}, from=5-4, to=7-4]
\end{tikzcd}\]

\caption{Functorial diagram of the categories used in this paper: nodes are categories, and arrows indicate functors. Note that this may not be commutative.}
    \label{figures/categories}
\end{figure}

\mypng{.25}{figures/objects3}{Diagram relating the mathematical structures developed in this paper, that is, the objects of the categories from Figure \ref{figures/categories}. Nodes are types of objects, and edges indicate functional relationships, that is, the ability to construct one object uniquely from another. 
Labeled edges indicate the specific nature of a relationship and/or propositional support in the text.}


For historical context,
in the 1950s Dowker gave a way to approach binary relations through a homological lens by forming the Dowker complex of a relation \cite{Dowker1952}. 
Around 1980, FCA introduced lattice theoretical interpretations of binary relations \cite{davey_priestley_2002, GaBWiR99}.
The third perspective for taking  binary relations as incidence matrices of hypergraphs is also canonical \cite{BeC89,BrA13}. 
Recent years have seen attempts to connect these separate perspectives, for example  FCA and Dowker complexes \cite{Freund2015,ayzenberg2019topology}. More specifically, the concept lattice of a relation is sufficient to recover the homotopy invariant properties of the Dowker complex of the same relation \cite{Freund2015}.  Formal concept analysis is closely related to modern topics from topological data analysis, some of which come by way of the Dowker complex \cite{ayzenberg2019topology}.
The formal concept lattice can be obtained from the hypergraph of a relation \cite{Cattaneo2016}.  There is also a morphological relationship between hypergraphs and formal concepts  \cite{Stell2014,Bloch2017}.

This paper is organized as follows:

\begin{itemize}
    \item \sec{prelims} provides the necessary background in binary relations, hypergraphs, formal contexts, order theory, and Galois connections, all from a categorical perspective, and including a running example.
    \item \sec{sec:hypergraph_complexes}  introduces concept lattices explicitly and categorically, and proves Theorem \ref{thm:concept_iso_intclose}.
    \item \sec{sec:concept_cosheaves} establishes in Theorem \ref{thm:concept_dowker} that concept lattices are a complete invariant for the Dowker cosheaf constructed in \cite{Robinson_relations}.  Briefly, a Dowker cosheaf can be recovered up to cosheaf isomorphism from a concept lattice.
    \item \sec{sec:dowker_homology} constructs a 
    generalization of Dowker's original homological result,
    resulting in a cosheaf of chain complexes in Theorem \ref{thm:concept_iso_cosheaf_of}, whose zeroth simplicial homology is precisely the homology of the Dowker complex (Corollary \ref{cor:cosheaf_of_generalizes}).
\end{itemize}



\section{Preliminaries} \label{prelims}

We now introduce preliminary notation and ideas. \sec{sec:first} introduces the rudiments of the three perspectives on binary relations, hypergraphs, and formal concepts. This is an initial set of definitions not always  fully  explicated, but sufficient to quickly introduce  a running example in \sec{running}. \sec{categories} introduces the basics of category theory, along with a list of the categories used in the paper, while subsequent subsections of \sec{prelims} will introduce ideas from order and lattice theory, topological Dowker complexes, and Galois connections.

\subsection{Binary Relations, Hypergraphs, and Formal Contexts} \label{sec:first}

Let $R \sub X \times Y$ be a binary relation over two finite, non-empty sets $X,Y$ with $n=|X|,m=|Y|$. In general we will denote $A \sub X$ and $B \sub Y$, and we introduce an object we call a {\bf Galois pair} as an ordered pair $\rho = (A,B) \in 2^X \times 2^Y$ consisting of a subset of $A \sub X$ paired to a subset of $B \sub Y$. We will also frequently denote a Galois pair $\rho = (A,B)$ as $\gal{A}{B}$, including using $\gal{}{B}, \gal{A}{}$ when $A = \emptyset$ or $B = \emptyset$ respectively.

There are many ways to characterize a binary relation, and our primary method will be as the Boolean characteristic matrix $I_{n \times m}$ of $R$, where for $x \in X, y \in Y$,
\begin{equation*}
  I[x,y] = \halffunc{ 
		1,	& (x,y) \in R	\\
		0,	& (x,y) \not\in R	\\
  }.
\end{equation*}

This paper compares and contrasts the relationships between four different representations of a binary relation $R$ and other objects derived from them, specifically: 

\begin{itemize}
\item {\it Order theoretically} as a Galois correspondence between $X$ and $Y$, 
  
\item {\it Set theoretically} as an incidence matrix of a hypergraph over a set of vertices $X$, 
  
\item {\it Topologically} via local correspondences between $X$ and $Y$ encoded as (co)sheaves, and

\item {\it Algebraically} via local correspondences between $X$ and $Y$ encoded as chain complexes.
\end{itemize}

Towards the first end, we adapt the notation of a binary relation to that typically used in the concept lattice literature \cite{GaBWiR99}. Given a Boolean matrix $I$, derived from $R$, we let  $M=X$  now be thought of as a set of {\bf objects}, and let $G = Y$ be thought of as a set of {\bf attributes}, and call $C = (G,M,I)$ a {\bf formal context} \cite{GaBWiR99}.%
    \footnote{As also noted above, we again regret that historically different notation is used by the FCA community: whereas we'd notate a multihypergraph $H=\tup{V,E,I}$, the tradition in FCA is to describe a formal context $C=\tup{G,M,I}$ for the identical structure. Also the use of ``objects'' here is not in the sense of category theory. We will use each as appropriate, hopefully without confusion.}
Given such a context $C=(G,M,I)$, for $(g,m) \in I$ we write $gIm$ and say the object $g$ has attribute $m$. Then, for $A \subseteq G$ and $B \subseteq M$ we define:
  \begin{equation}
  \label{eq:prime_def}
  A' = \{m \in M :  gIm \text{ for all }g\in A\},	\text{ and }
    B' = \{g \in G :  gIm \text{ for all }m\in B\}.
    \end{equation}
  A \textbf{concept} of the context $(G,M,I)$ is then defined as a Galois pair $K=(A,B) = \gal{A}{B}$, where now $A\subseteq G, B \subseteq M$, and $A'=B,B'=A$.
We will call the set of all concepts of a context $\setsys{B}(G,M,I)$, or just $\cl$ when clear from context. For any concept $K=\gal{A}{B} \in \cl$,
$A$ is called its \textbf{extent} and $B$ its  \textbf{intent}.
Note that $A$ is the extent of a concept $\gal{A}{A'}$ if and only if $A'' = A$. The concepts $K \in \cl$ form a lattice $(\cl;\leq)$, 
where for $K_1=(A_1,B_1), K_2=(A_2,B_2) \in \setsys{B}(G,M,I)$, there is the order
    \({conorder} K_1 \le K_2 \h{ when } A_1 \subseteq A_2.    \)
    It then follows that $B_1 \supseteq B_2$. Such ideas from lattice theory and concept lattices in particular are  more fully explicated in \sec{sec:cl} below.

Towards the second end, we adapt the notation of binary relations to that typically used in hypergraphs. Now we let $V=X$ be thought of as a set of {\bf vertices}, and let $E = Y$ be thought of as a set of {\bf hyperedges}, and given $R$, reuse $I$ to be an {\bf incidence function} $\func{I}{V \times E}{\{0,1\}}$ of
a {\bf multi-hypergraph} $H=(V,E,I)$, where $I(v,e)=1$ iff $(v,e) \in R$. $I$ establishes $E$ as a family of hyperedges $e \in E$, where $e \sub V$, and $\forall v \in V, e \in E, v \in e$ iff $I(v,e)=1$.

Note that $H$ as described is a multi-hypergraph in that there could be duplicate hyperedges, i.e.\ $E$ is conceived of as a multiset or bag, so that it's possible for two distinct edges to be related to the exact same set of vertices. We call {\bf collapsing} the process of identifying the underlying set from the multiset $E$, that is, to remove any duplicate elements. This process renders $E$ as a set of hyperedges $E \sub 2^V$, and $H$ becomes just a {\bf hypergraph}. While not all binary relations yield incidence matrices (or formal contexts) of collapsed hypergraphs, the incidence matrix of every collapsed hypergraph is a binary relation yielding a formal context. 


\rem{
$E \subseteq \pwr(V)$,
a subset of the power set of $V$.
The {\bf dual hypergraph} $H^T = (E,V,I^T)$ is defined dually  on the transposed incidence matrix $I^T$, or the reflected relation $R^T \sub Y \times X$.
We note that a number of different hypergraph definitions are available in the literature (see e.g.\ \cite{AkSJoC20}). In our case, $H$ is not a multi-hypergraph). This construction means that edges are not duplicated in a hypergraph,
which differs sharply from the general setting of the Boolean matrix $I$ which can have duplicated rows and columns. 
}

Finally, by focusing our attention on certain appropriate subsets of $X$ or $Y$,
we will obtain the topological, sheaf-theoretical perspective. This will be fully introduced  below in Sections \ref{sec:background_setsystems} and \ref{sec:background_cosheaf}.


\subsection{Running Example} \label{running}
  \label{eg:running}

\rem{

  We will use a running example over $X = \{ a,b,c,d\}, Y = \{ 1,2,3,4,5,6 \}$, as shown as an incidence matrix in Table~\ref{tab:running}. 
  \rawson{This is not a hypergraph per our definition!}
  The pair of dual hypergraphs are shown in \fig{fig:hyper_ex}. We can also list the formal concepts, using compact set notation we will suggestively call ``Galois notation'' (for example, the concept $K=(\{a,b\},\{1,4\})$ is simplified to $ab:14$), as
  \begin{equation*}
    \setsys{B}(G,M,I) = \{ abc:1, ab:14, bd:3, cd:2, b:134, c:12, d:2356 \}.
  \end{equation*}
  \rawson{Add $\emptyset:123456$, $abcd:\emptyset$, ...}

}

We will use a running example over $X = \{ a,b,c,d\}, Y = \{ 0,1,2,3,4,5 \}$, as shown as an incidence matrix in Table~\ref{tab:running}. 
  The multihypergraph is shown on the left side of  \fig{fig:hyper_ex} as an {\bf Euler diagram}, where a closed curve represents the hyperedge surrounding a collection of points as its vertices. Noting that columns 0 and 2 are equal, yielding $e_0=e_2=\{a,d\}$, we show the collapsed hypergraph on the right, and imagine the collapsed matrix missing column 0. 

\begin{table}
  \begin{center}
    \caption{Running example binary relation.}
    \begin{tabular}{ c||c|c|c|c|c|c }
       & 0 & 1 & 2& 3& 4& 5\\
     \hline
      a   & x & x    & x  &    &   & x\\
      b   &&     &   &   x&   x&x \\
      c   &&  x   &  &    & x   &x \\
      d   &x&    x  & x &   &    & \\
    \end{tabular}
    \label{tab:running}
  \end{center}
\end{table}

\begin{figure}
  \begin{center}
    \begin{tabular}{ccc}
      \subfloat{\includegraphics[width = 3in]
      {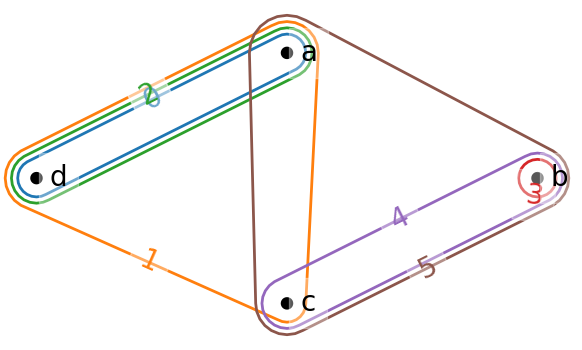} } &
      \subfloat{\includegraphics[width = 3in]{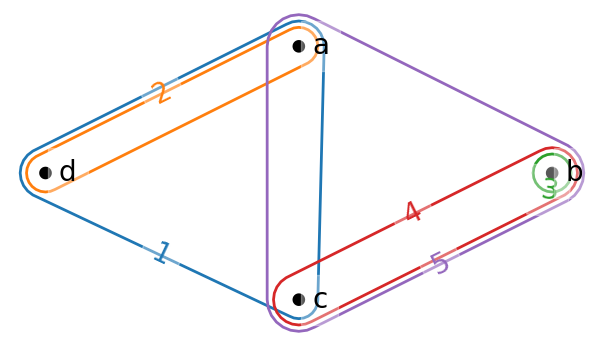}} 
    \end{tabular}
    \caption{(Left) Euler diagram of a multihypergraph $H$ in Example \ref{eg:running}. (Right) Its collapsed hypergraph.}
    \label{fig:hyper_ex}
  \end{center}
\end{figure}

Recalling the notational convention that $G=X,M=Y$, then the 10 formal concepts are:
  \begin{equation*}
    \setsys{B}(G,M,I) = \{ \gal{}{012345}, \gal{b}{345}, \gal{c}{145}, \gal{a}{0125}, \gal{bc}{45}, \gal{ac}{15},  \gal{ad}{012}, \gal{abc}{5}, \gal{acd}{1}, \gal{abcd}{}  \}.
\end{equation*}
    Note how our Galois pair notation introduced earlier is efficient when identifers are single characters used in compact set notation. For example, the concept $K=(\{a,c\},\{1,5\})$ is simplified to $K = \gal{ac}{15}$.

The Hasse diagram of the concept lattice for our example is then shown in \fig{figures/H_final_CL2}. On the left side the ``full context lattice'' is shown, with for each concept $K=(A,B) = \gal{A}{B} \in \cl$ with intent $A \sub G$ and extent $B \sub M$ shown as a Galois pair. Note that starting here, and through the sequel, for visual clarity we show the left (extent) member of the Galois pair in black, and the right (intent) in red. A reduced version is shown on the right-hand side, a term which will be fully introduced in \sec{sec:cl} below.

\mypng{.5}{figures/H_final_CL2}{(Left) The Hasse diagram of the full concept lattice for our example. Concepts are nodes, Galois pairs of  extents (black, left) and intents (red, right). (Right) Its reduced form (see \sec{sec:cl}).
\label{fig:concept_lattice}
}

\subsection{Categories} \label{categories}

This paper uses the mathematics of category theory to provide the organizing structure.
The main insight of category theory is that in addition to mathematical objects, such as sets, hypergraphs, and formal contexts,
one should study classes of structure preserving transformations of these objects.
While familiarity with category theory is not required to grasp the main ideas of this paper,
a few preliminaries will help.

\begin{definition}
  A \textbf{category} $\cat{C}$ consists of:
  \begin{itemize}
  \item A class $Ob(\cat{C})$ of \textbf{objects},
  \item A class $Mor(\cat{C})$ of \textbf{morphisms}, and
  \item A binary operation $\circ$ called \textbf{composition} on certain pairs of morphisms, as below.
  \end{itemize}
  Each morphism $m \in Mor(\cat{C})$ can be expressed as an arrow $m : A \to B$ where $A, B \in Ob(\cat{C})$.
  Moreover, if $m_1 : A_1 \to A_2$ and $m_2 : A_2 \to A_3$ are morphisms, their composition uniquely defines $(m_2 \circ m_1) : A_1 \to A_3$.

  Additionally, the following axioms must hold:
  \begin{itemize}
  \item Each object $A$ has a unique $\id_A : A \to A$ which is an identity when composed with other morphisms on the left $m \circ \id_A = m$ or right $\id_A \circ m = m$, and
  \item The composition is an associative operation.
  \end{itemize}

  Two objects $A$ and $B$ are said to be \textbf{isomorphic} in $\cat{C}$ if there exist two morphisms $p: A \to B$ and $q: B \to A$ such that $q \circ p = \id_A$ and $p \circ q = \id_B$.
  In this case, $p$ and $q$ are called \textbf{isomorphisms}.
\end{definition}

While the abstraction of the definition of a category may seem excessive, categories abound.
The precision afforded to the use of categories allows one to control the data types and their transformations quite explicitly.
This becomes important in a number of places throughout our study. 
The relationships between concept lattices and their various topological instantiations are subtle, 
and some of the apparently ``obvious'' relationships are not quite true.  
Categories help elucidate the correct relationships in these cases.
The reader is therefore cautioned that some of the categories have straightforward definitions that are essentially the only option possible, whereas others require specific choices in their definition.

Mathematical analogies are best captured by the notion of a functor, a kind of structure-preserving transformation of categories themselves.

\begin{definition}
  A \textbf{covariant functor} $F$ from one category $\cat{C}$ to another $\cat{D}$ is written $F: \cat{C} \to \cat{D}$ and consists of the following data:
  \begin{itemize}
  \item A function on objects, usually written as $F : Ob(\cat{C}) \to Ob(\cat{D})$, and
  \item A function on morphisms $Mor(\cat{C})$ to $Mor(\cat{D})$, usually also written with the symbol $F$, since no confusion can arise as objects and morphisms are of different types, $F : Mor(\cat{C}) \to Mor(\cat{D})$,
  \end{itemize}
  subject to two constraints:
  \begin{enumerate}
  \item If $m: A \to B$ is a morphism in $\cat{C}$, then $(Fm) : F(A) \to F(B)$ is a morphism in $\cat{D}$, and
  \item $F$ respects composition: $F(m_2 \circ m_1) = F(m_2) \circ F(m_1)$.
  \end{enumerate}

  A \textbf{contravariant functor} consists of the same data with the change that it reverses the direction of morphisms.
  Namely, $(Fm) : F(B) \to F(A)$ and $F(m_2 \circ m_1) = F(m_1) \circ F(m_2)$ for a contravariant functor.

  Without further qualification, ``functor'' will be taken to mean a covariant functor.
\end{definition}

For easy reference, this paper considers the following categories, which are defined in the subsequent sections, and among whom the proposed functors are shown in \fig{figures/categories}:

\begin{description}

\item[$\cat{CompLat}$] (Definition \ref{def:complat}) The category of complete lattices.

\item[$\cat{Rel}$] (Definition \ref{def:rel} ) The category of binary relations.

\item[$\cat{MHyp}$] (Definition \ref{def:mhyp}) The category of multi-hypergraphs derived from an incidence relation.

\item[$\cat{Hyp}$] (also Definition \ref{def:hyp}) The category of (collapsed) hypergraphs, which is a full subcategory of $\cat{MHyp}$.

\item[$\cat{Asc}$] (Definition \ref{def:asc}) The category of abstract simplicial complexes.

\item[$\cat{Ctx}$] (Definition \ref{def:ctx}) The category of formal contexts which captures the constraint between particular elements of a binary relation so as to preserve the formal concepts.

\item[$\cat{ConLat}$] (Definition \ref{def:conlat}) The category of concept lattices, which we also introduce here.

\item[$\cat{CoShvAsc}$] (Definition \ref{def:coshvasc}) We introduce the category of cosheaves of abstract simplicial complexes.


\item[$\cat{CoShvKom}$] (Definition \ref{def:coshvkom}) We introduce the category of cosheaves of chain complexes.

\end{description}

\subsection{Order Theory} \label{order-sec}

See \cite{davey_priestley_2002} for more details.

\begin{definition}
  Let $P$ be a set. A \textbf{partial order} on $P$ is a binary relation $\leq$ on $P$ such that for all $x,y,z \in P$, 
  \begin{enumerate}
  \item $x \leq x$,
  \item $x \leq y$ and $y \leq x$ implies $x = y$, 
  \item $x \leq y$ and $y \leq z$ implies $x \leq z$.
  \end{enumerate}
  The pair $(P,\leq)$ is called a \textbf{partially ordered set} or a \textbf{poset}.
  When $\leq$ is clear from context, we will usually say ``$P$ is a poset'' without ambiguity. When $x \le y$ we also denote $y \ge x$, and if additionally $x \neq y$ then we can say $x < y$ and $y > x$. For $S \sub P$, then $(S,\le_S)$ is a poset where $\le_S$ is $\le$ restricted to $S$.
\end{definition}

Every poset $P$ also has a natural dual poset $P^{op}$ where $y\leq x$ in $P^{op}$ if and only if $x\leq y$ in $P$.
Given a poset $P$ it is common to display it visually as a {\bf Hasse diagram}, which shows only its reflexive and transitive reduction. Nodes in the diagram are elements of $P$, with $x$ vertically above $y$, and a line segment descending from $x$ to $y$, when $x$ ``covers'' $y$ in that $x > y$ and there is no $z$ with $x > z > y$. We will use Hasse diagrams routinely, starting with the example below.

\begin{definition}[Edge Poset]
  Given a hypergraph $H=(V,E)$, construct its \textbf{edge poset} as $(E,\subseteq)$.
  \end{definition}

Figure \ref{figures/hg_hasse} shows the Hasse diagram for the edge poset of the collapsed hypergraph $H$ in Example \ref{eg:running}. Note that we only have two disjoint chains, with $\{a,d\} \sub \{a,c,d\}$ and $\{b\} \sub \{b,c\} \sub \{a,b,c\}$, where the figure shows compact set notation.

\mypng{.5}{figures/hg_hasse}{The Hasse diagram of the poset $(E,\sub)$ for our example, using compact set notation.}

\rem{

\begin{figure}
  \begin{center}
    \begin{equation*}
      \begin{tikzcd}
    & {[a,b,c]} \\
	        {[a,b] } & {[a,c]} & {[b,c]} & {[b,d]} & {[c,d] } \\
	        & {[a]} & {[b]} & {[c] } & {[d]}
	        \arrow[from=3-2, to=2-1]
	        \arrow[from=3-2, to=2-2]
	        \arrow[from=3-3, to=2-1]
	        \arrow[from=3-3, to=2-3]
	        \arrow[from=3-3, to=2-4]
	        \arrow[from=3-5, to=2-4]
	        \arrow[from=3-4, to=2-3]
	        \arrow[from=3-4, to=2-5]
	        \arrow[from=3-5, to=2-5]
	        \arrow[from=3-4, to=2-2]
	        \arrow[from=2-2, to=1-2]
	        \arrow[from=2-3, to=1-2]
	        \arrow[from=2-1, to=1-2]
      \end{tikzcd}
    \end{equation*}
    \caption{Hasse diagram of the face poset of $D(G,M,I)$ for the context defined in Example \ref{eg:running}.}  
    \label{fig:running_face_poset}
  \end{center}
\end{figure}

}

\begin{definition}
  Given two partially ordered sets $(P, \leq_P)$ and $(Q,\le_Q)$,
  a function $f: P \to Q$ is \textbf{order preserving} if for all $x,y \in P$,
  $x \leq_P y$ implies that $f(x) \le_Q f(y)$.
  If this condition holds more strongly, namely if for all $x,y \in P$,
  it follows that $x \leq_P y$ if and only if $f(x) \le_Q f(y)$,
  then we say that $f$ is \textbf{order embedding}.
  
  If $f$ is also surjective then it is an \textbf{order isomorphism}.
\end{definition}

\begin{definition}
  The category $\cat{Pos}$ has every partially ordered set $(P,\leq)$ as an object.
  Each morphism $g : (P,\leq_P) \to (Q,\leq_Q)$ in $\cat{Pos}$ consists of an order preserving function $g : P \to Q$ such that if $x$ and $y$ are two elements of $P$ satisfying $x \leq_P y$,
  then $g(x) \leq_Q g(y)$ in $Q$.
  Morphisms compose as functions on their respective sets.
\end{definition}

\begin{definition}
  Let $P$ be a nonempty poset. An element $x \in P$ is \textbf{minimal} if $\not \exists y \in P, y < x$, and let $\min(P) \sub P$ be the set of its minimal elements. An element $x \in P$ is \textbf{maximal} if $\not \exists y \in P, y > x$, and let $\max(P) \sub P$ be the set of its maximal elements. For $x \neq y \in P$, let 
  \begin{equation*}
    J(x,y) = \min(\{ z \in P \st z \ge x, z \ge y \}) \sub P,	\quad
    M(x,y) = \max(\{ z \in P \st z \le x, z \le y \}) \sub P
  \end{equation*}
  be their \textbf{join set} and \textbf{meet set} respectively. By  notational extension, let $J(S),M(S)$ be the join and meet sets over a non-empty collection of nodes $S \sub P$. When $|J(x,y)|=1$ (resp.\ $|M(x,y)=1|$) then let $x \vee y \in J(x,y)$ ($x \wedge y \in M(x,y)$) be their unique \textbf{join} and \textbf{meet}, and similarly for $\bigvee S, \bigwedge S$.
  \begin{enumerate}
  \item If $x \vee y$ and $x \wedge y$ exist for all $x$ and $y$ in $P$ then $P$ is a \textbf{lattice}. 
  \item If additionally $\bigvee S$ and $\bigwedge S$ exist for all $S \subseteq P$ then $P$ is called a \textbf{complete lattice}. 
  \end{enumerate}
\end{definition}

\begin{definition}
  Let $L$ be a lattice and $\emptyset \neq M \subseteq L$ then $M$ is a \textbf{sublattice} of $L$ if $a,b \in M$ implies $a \vee b \in M$ and $a \wedge b \in M$.
\end{definition}

\begin{definition}
  If $L$ and $K$ are lattices then an order preserving $f : L \to K$ is a \textbf{lattice homomorphism} if for all $a,b\in L$,
  \begin{equation*}
    f(a\vee b) = f(a) \vee f(b), \quad 
    f(a\wedge b) = f(a)\wedge f(b).
  \end{equation*}
  If $f$ is a bijective homomorphism then it is a \textbf{lattice isomorphism}.
  If $f: L \to K$ is a one to one homomorphism the sublattice $f(L)$ of $K$ is isomorphic to $L$ and we refer to it as a \textbf{lattice embedding}. 
\end{definition}

It is a straightforward exercise to demonstrate that the composition of two lattices homomorphisms $f_1: L_1 \to L_2$ and $f_2: L_2 \to L_3$ yields a function $(f_2 \circ f_1): L_1 \to L_3$ that is itself a lattice homomorphism.  This means that the collection of all lattices and lattice homomorphisms forms a mathematical category.

\begin{definition} \cite{nlab:lat}
  \label{def:complat}
  The category $\cat{CompLat}$ consists of all complete lattices, with all associated lattice homomorphisms as morphisms.
\end{definition}

\begin{definition}
  Let $P$ be an ordered set and let $S \subset P$.
  Then $S$ is called \textbf{join-dense} in $P$ if for every element $x \in P$ there is a subset $A$ of $S$ such that $x = \bigvee_{P}A$.
  The dual of join-dense is \textbf{meet-dense}.
\end{definition}


\subsection{Hypergraphs and Dowker Complexes}
\label{sec:background_setsystems}

The categorical perspective on hypergraphs can be traced back to \cite{Drfler1980} and has more recently been furthered, and in some cases corrected, in \cite{Grilliette2018IncidenceHT} and \cite{grilliette2024simplificationincidenceincidencefocused}. The authors will use a slightly different definition of the category of multi-hypergraphs which we prove yields an isomorphic category. This is done to make more transparent the connection to the other categories at play in this paper. We will first start with the category of relations as it is a natural starting point. 

\begin{definition}[The category $\cat{Rel}$]  (see \cite{brun2019sparse} or \cite[Def. 6]{Robinson_relations})
    \label{def:rel} 
$\cat{Rel}$ has triples $(X,Y,I)$ for objects, where $X$ and $Y$ are sets, and $I \subseteq X \times Y$.  A morphism $(X_1,Y_1,I_1) \to (X_2,Y_2,I_2)$ in $\cat{Rel}$ is defined by a pair of functions $f:X_1\to X_2$, $g:Y_1\to Y_2$ such that $I_2(f(x),g(y))=1$ whenever $I_1(x,y)=1$.  Composition of morphisms is simply the composition of the corresponding pairs of functions, which means that $\cat{Rel}$ satisfies the axioms for a category.
\end{definition}

The appropriate category $\cat{MHyp}$ for multi-hypergraphs is derived from $\cat{Rel}$ by restricting the class of morphisms in the following way. (Note when using $e'$ below we are using the same $'$ defined in section 2.1 but on the singleton set $\{e\}$.)

\begin{definition}[The category $\cat{MHyp}$]
\label{def:mhyp}
The objects of $\cat{MHyp}$ are the same as $\cat{Rel}$. 
A morphism $f: H_1 \to H_2$ from $H_1 = (V_1,E_1, I_1)$ to $H_2 = (V_2,E_2,I_2)$ in $\cat{MHyp}$ is a pair of functions $f_V : V_1 \to V_2$ and $f_E: E_1 \to E_2$ such that for each $e \in E_1$, 
\begin{equation*}
    f_V(e') = (f_E(e))',
\end{equation*} 
where we are using the abbreviated notation 
\begin{equation*}
  f_V(e') = \{ f_V( v ) : v \in e' \}
\end{equation*}

In other words $f_V$ maps the vertices of an edge $e$ surjectively to the vertices of its image hyperedge $f_E(e)$.
\end{definition}

While the objects of $\cat{Rel}$ and $\cat{MHyp}$ and $\cat{Ctx}$ (defined below) are the same, we suggestively use $(X,Y,I)$ for relations, $(V,E,I)$ for multihypergraphs and $(G,M,I)$ for contexts since this is common in their respective literature.

  \begin{definition}
    \label{def:hyp}
  Suppose that $H_1 = (V_1,E_1)$ and $H_2 = (V_2,E_2)$ are two hypergraphs.
  A {\bf hypergraph morphism} $m : H_1 \to H_2$ is a function on vertices $m : V_1 \to V_2$ such that for every $e_1 \in E_1$ there is an $e_2 \in E_2$ such that $m(e_1)= e_2$.
  The {\bf composition} $m_2 \circ m_1$ of two hypergraph morphisms $m_1 : H_1 \to H_2$ and $m_2 : H_2 \to H_3$ is defined by the composition of their vertex functions.  
\end{definition}

\begin{lemma}
  Composition of hypergraph morphisms is well-defined.
  If $m_1 : H_1 \to H_2$ and $m_2 : H_2 \to H_3$ are hypergraph morphisms, then $(m_2 \circ m_1) : H_1 \to H_3$ is itself a hypergraph morphism.
\end{lemma}
\begin{proof}
  If $e_1 \in E_1$, then there is an $e_2 \in E_2$ such that $m_1(e_1) = e_2$.
  Continuing this idea, there is also an $e_3 \in E_3$ such that $m_2(e_2) = e_3$.
  Therefore, $m_2(m_1(e)) =  m_2(e_2) \subseteq e_3$ as desired.
\end{proof}

Since the objects of $\cat{Rel}$ and $\cat{MHyp}$ are the same and the morphisms of $\cat{MHyp}$ are just a more restricted subclass of those in $\cat{Rel}$ it is obvious that $\cat{MHyp}$ is in fact a subcategory of $\cat{Rel}$ 

The authors will note that our category of multi hypergraphs is not presented in the typical way; however, it fits more cleanly in our language and is isomorphic to a standard definition. In \cite{Grilliette2018IncidenceHT} the main category of set system hypergraphs, denoted $\mathfrak{H}$ and comparable to our category $\cat{MHyp}$, is defined as follows.
\begin{definition} \cite{Grilliette2018IncidenceHT} 
    The category $\mathfrak{H}$ is given by the following:
    \begin{itemize}
        \item The objects are triples $H=(V,E,\epsilon)$ where $V,E$ are sets and $\epsilon: E \to \mathcal{P}(V)$, whre $\mathcal{P}$ is the power set. The sets $V$ and $E$ are referred to as the vertex and edge set respectively. 
        \item A morphism $(V,E,\epsilon) \to (V',E',\epsilon')$ is a pair $(f_V,f_E)$ where $g: V \to V'$ and $f: E \to E'$ are morphisms in set such that the diagram in Figure \ref{fig:hypergraph_morphism_def} commutes.
    \end{itemize}
\begin{figure}
    \begin{center}
\begin{tikzcd}
E \arrow[r, "f_E"] \arrow[d, "\epsilon"'] & E' \arrow[d, "\epsilon'"] \\
\mathcal{P}(V) \arrow[r, "\mathcal{P}(f_V)"'] & \mathcal{P}(V')
\end{tikzcd}
\end{center}
    \caption{The defining diagram for a morphism in $\mathfrak{H}$.}
    \label{fig:hypergraph_morphism_def}
\end{figure}

\end{definition}

\begin{lemma}
    $\cat{MHyp}$ is isomorphic to $\mathfrak{H}$
\end{lemma}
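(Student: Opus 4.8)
The plan is to exhibit a pair of mutually inverse functors $F : \cat{MHyp} \to \mathfrak{H}$ and $\Phi : \mathfrak{H} \to \cat{MHyp}$ and to check that $\Phi \circ F = \id_{\cat{MHyp}}$ and $F \circ \Phi = \id_{\mathfrak{H}}$ \emph{on the nose}, which is exactly what an isomorphism (as opposed to a mere equivalence) of categories demands. The entire construction rests on the classical currying bijection between relations $I \subseteq V \times E$ and power-set-valued functions $\epsilon : E \to \mathcal{P}(V)$.

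On objects, I would send $(V,E,I) \in \cat{MHyp}$ to $(V,E,\epsilon_I)$, where $\epsilon_I(e) := e' = \{v \in V : (v,e) \in I\}$ is the vertex set of the edge $e$ in the notation of \sec{sec:first}; conversely I would send $(V,E,\epsilon) \in \mathfrak{H}$ to $(V,E,I_\epsilon)$, where $(v,e) \in I_\epsilon$ exactly when $v \in \epsilon(e)$. Then I would verify that these assignments are inverse bijections: starting from $I$, the round trip gives $(v,e) \in I_{\epsilon_I} \iff v \in \epsilon_I(e) \iff (v,e) \in I$, so $I_{\epsilon_I} = I$, and symmetrically $\epsilon_{I_\epsilon} = \epsilon$.

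On morphisms, both functors would act as the identity on the underlying pair $(f_V, f_E)$, so the only thing to check is that a legal morphism of one category is a legal morphism of the other under the object correspondence. Here I would unwind the commuting square of Figure \ref{fig:hypergraph_morphism_def}: since $\mathcal{P}(f_V)$ is the covariant direct-image map, evaluating $\mathcal{P}(f_V)\circ\epsilon = \epsilon'\circ f_E$ at an edge $e$ reads $f_V(\epsilon(e)) = \epsilon'(f_E(e))$, and after substituting $\epsilon = \epsilon_{I_1}$, $\epsilon' = \epsilon_{I_2}$ this becomes $f_V(e') = (f_E(e))'$. This is verbatim the defining condition of a morphism in $\cat{MHyp}$ (Definition \ref{def:mhyp}), so the two morphism conditions coincide pointwise and each functor carries morphisms to morphisms. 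Functoriality is then immediate, since in both categories composition is componentwise composition of the set-functions $f_V$ and $f_E$ and the identity is the pair of identity maps; thus $F$ and $\Phi$ preserve identities and composition trivially, and $\Phi\circ F$, $F\circ\Phi$ are the respective identities by the object-level bijection.

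The main (and essentially only) obstacle is bookkeeping rather than mathematical difficulty: one must be careful that the prime $e'$ is computed in the correct context (in $V_1$ versus $V_2$) and that $\mathcal{P}(f_V)$ denotes the \emph{covariant} direct-image functor, so that the commuting square of $\mathfrak{H}$ aligns precisely with the surjective-image condition of $\cat{MHyp}$ and not with some preimage condition. Once this identification is pinned down, nothing further is required.
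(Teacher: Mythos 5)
Your proposal is correct and matches the paper's argument: both rest on the currying bijection between incidence relations $I \subseteq V \times E$ and maps $\epsilon : E \to \mathcal{P}(V)$, together with the observation that the $\cat{MHyp}$ condition $f_V(e') = (f_E(e))'$ is precisely the commuting square defining morphisms in $\mathfrak{H}$. The only cosmetic difference is that you exhibit the inverse functor and check both round trips explicitly, whereas the paper defines $F$ and argues it is bijective on objects and morphisms --- the same verification packaged differently.
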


\begin{proof}
    Define a functor $F : \cat{MHyp}\to \mathfrak{H}$ by the following recipe:
    \begin{description}
        \item[Objects]: The object $(V,E,I)$ in $\cat{MHyp}$ is taken to
        \begin{equation*}
            F(V,E,I) := (V,E,\epsilon),
        \end{equation*}
        where
        \begin{equation*}
            \epsilon(e) := \{v\in V| I(v,e)=1\}.
        \end{equation*}
        \item[Morphisms]: Given a morphism $f$ from $H \to G$ in $\cat{Mhyp}$, this consists of a pair of functions $f_V$ and $f_E$ on the vertex and edge sets, respectively.  Let $F$ simply preserve these two functions, which means that the morphism in $\mathfrak{H}$ is also defined by $f_V$ and $f_E$.  To see that this is well-defined morphism in $\mathfrak{H}$, note that the condition that $f_V(e) = V(f_E(e))$ for all $e\in E_G$ is exactly the condition that $\mathcal{P}(f_v)(\epsilon_H(e)) = \epsilon_G((f_E(e))$.
    \end{description}

    To establish that $F$ is an isomorphism of categories, we establish that it is a bijection both on objects and morphisms.

    $F$ gives a bijection of objects as the sets for the vertices and the edges stay the same and all that changes is the $I$ or $\epsilon$ respectively. 
    
    $F$ is also bijective on morphism sets.  The well-definedness condition noted above is exactly the same condition as the diagram in Figure \ref{fig:hypergraph_morphism_def} commuting. 
    As a result, $f$ is a morphism in $\cat{MHyp}$ if and only if it is a morphism in $\mathfrak{H}$.
\end{proof}

This isomorphism also allows us to use the following result which has been rephrased from \cite{grilliette2024simplificationincidenceincidencefocused} into our language.

\begin{lemma}
    There exists a functor which we will call $Collapse$ from $\cat{MHyp}$ to $\cat{Hyp}$ defined as follows. If $(V,E,I)$ is an object in $\cat{MHyp}$ then $Collapse((V,E,I)) = (V,E_c)$ where $E_c\subseteq \mathcal{P}(V)$ and $h\in E_c$ if there exists some $e\in E$ such that $e' = h$. If $(f_v,e_v)$ is a morphism in $\cat{MHyp}$ then $Collapse(f_v,f_e) = f_v$.
    Moreover, this functor is actually right adjoint to the natural inclusion of categories going the other way.
    
\end{lemma}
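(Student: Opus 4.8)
The statement has two halves: that $Collapse$ is a well-defined functor $\cat{MHyp}\to\cat{Hyp}$, and that it is right adjoint to the inclusion $\iota\colon\cat{Hyp}\to\cat{MHyp}$. I would dispatch the functoriality first, since it is routine. On objects, $Collapse(V,E,I)=(V,E_c)$ with $E_c=\{e' : e\in E\}\sub\pwr(V)$ is manifestly a collapsed hypergraph. On morphisms, given $(f_V,f_E)\colon H_1\to H_2$ in $\cat{MHyp}$, I must check that the bare vertex map $f_V$ is a $\cat{Hyp}$-morphism $(V_1,E_{c,1})\to(V_2,E_{c,2})$, i.e.\ that $f_V(h)\in E_{c,2}$ for each $h\in E_{c,1}$. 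Writing $h=e'$ for some $e\in E_1$, the defining condition $f_V(e')=(f_E(e))'$ yields $f_V(h)=(f_E(e))'\in E_{c,2}$, as required. Preservation of identities and composition is immediate, since $Collapse$ simply discards the edge-component $f_E$ and composes vertex functions.

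For the adjunction $\iota\dashv Collapse$, the cleanest route to the right-adjoint claim is to produce the unit and counit and verify the triangle identities. The unit is easy: collapsing an already-collapsed hypergraph changes nothing, so $Collapse\circ\iota=\id_{\cat{Hyp}}$ and $\eta$ may be taken to be the identity. The content lies in the counit $\epsilon_H\colon \iota(Collapse\,H)\to H$ at each $H=(V,E,I)$. Its vertex component should be $\id_V$, and its edge component must be a map $s\colon E_c\to E$ with $(s(h))'=h$ for every $h\in E_c$ — that is, a section of the collapse quotient $q\colon E\to E_c$, $q(e)=e'$, selecting for each realized vertex-set a single representative edge of $H$. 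Granting such a counit, the adjunction is equivalent to a natural hom-bijection $\mathrm{Hom}_{\cat{MHyp}}(\iota K,H)\cong\mathrm{Hom}_{\cat{Hyp}}(K,Collapse\,H)$: the forward map sends $(f_V,f_E)$ to its vertex part $f_V$ (which lands in $Collapse\,H$ by the computation above), and the backward map reconstitutes the edge component from a vertex map $g$ by composing with the chosen section, $g\mapsto(g,\,s\circ g)$. That the forward map is surjective and natural is straightforward; the substantive point is that it is a \emph{bijection}, which I would phrase via a couniversal arrow: showing $(Collapse\,H,\epsilon_H)$ is terminal in the comma category $(\iota\downarrow H)$, so that every $\phi\colon\iota K\to H$ factors through $\epsilon_H$ by a \emph{unique} $\cat{Hyp}$-morphism $K\to Collapse\,H$.

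The main obstacle — and the step deserving real care — is exactly this construction of the counit and the uniqueness of the factorization, precisely because $H$ may carry duplicate edges. The quotient $q$ need not admit a \emph{canonical} section, so choosing $s(h)$ among the edges with $e'=h$ is the delicate point on which the whole adjunction turns; my plan is to pin down $s$ using whatever canonical structure is available (for instance, a fixed well-ordering of each edge set, taking the least representative) and then to prove this choice is compatible with $\cat{MHyp}$-morphisms, so that $\epsilon$ is genuinely natural. Correspondingly, the crux of the uniqueness claim is that for a collapsed domain $K$ the edge-component $f_E$ of a morphism $\iota K\to H$ is forced by $f_V$; I expect the full weight of the hypothesis that $K$ is collapsed to be needed here, and this is where any careful proof must do its real work and where I would concentrate attention.
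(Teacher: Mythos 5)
Your functoriality argument is fine, and your instinct about where the difficulty lies is exactly right --- but that difficulty is fatal, not merely delicate: with the morphisms as defined in this paper, $Collapse$ is \emph{not} right adjoint to the inclusion $\iota\colon \cat{Hyp}\to\cat{MHyp}$, so the counit and the hom-bijection you plan to construct do not exist. Take $K$ to be the one-vertex hypergraph $(\{v\},\{\{v\}\})$ and $H=(\{w\},\{e_1,e_2\},I)$ with two parallel edges $e_1'=e_2'=\{w\}$. Then $\mathrm{Hom}_{\cat{MHyp}}(\iota K,H)$ has two elements (the edge of $K$ may be sent to $e_1$ or to $e_2$), while $\mathrm{Hom}_{\cat{Hyp}}(K,Collapse\,H)$ has one, so no natural bijection of hom-sets can exist, and $(Collapse\,H,\epsilon_H)$ is never terminal in $(\iota\downarrow H)$: your forward map $(f_V,f_E)\mapsto f_V$ is surjective but $2$-to-$1$ here. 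Your proposed repair --- fixing a section $s$ of the quotient $q\colon E\to E_c$, $q(e)=e'$, by a well-ordering --- cannot be made natural either: the automorphism of $H$ swapping $e_1$ and $e_2$ (with $f_V=\id$) is sent by $Collapse$ to the identity, so naturality of $\epsilon$ would force $\mathrm{swap}\circ\epsilon_H=\epsilon_H$, i.e.\ $\mathrm{swap}(s(\{w\}))=s(\{w\})$, a contradiction. So the step you flagged as ``where any careful proof must do its real work'' is in fact where every proof in this direction must fail.

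What is true, and provable in a few lines, is the opposite adjunction: $Collapse$ is \emph{left} adjoint to $\iota$, i.e.\ $\cat{Hyp}$ is a reflective subcategory of $\cat{MHyp}$. The canonical datum is a unit, not a counit: $\eta_H=(\id_V,\,e\mapsto e')\colon H\to\iota(Collapse\,H)$, which involves no choices. Given any morphism $(f_V,f_E)\colon H\to\iota K$ with $K$ collapsed, the defining condition $f_V(e')=(f_E(e))'=f_E(e)$ \emph{forces} $f_E$, so $(f_V,f_E)$ factors through $\eta_H$ via the unique $\cat{Hyp}$-morphism $f_V\colon Collapse\,H\to K$. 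This is exactly the observation you made at the end (the edge component is forced when the \emph{codomain} is collapsed), but notice it yields uniqueness of factorizations \emph{out of} $H$ through $\eta_H$ --- a left-adjoint universal property --- while duplicate edges in a codomain $H$ are precisely what kill the right-adjoint direction, rather than an obstacle to be finessed by choosing representatives. For comparison with the paper: the paper gives no proof of this lemma at all, importing it from the cited Grilliette reference, and its ``right adjoint'' wording is at odds with the categories as defined here; the adjunction you can actually establish from these definitions is $Collapse\dashv\iota$, and you should restructure your argument around the unit $\eta$ accordingly.
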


\begin{definition}
  A hypergraph $H = (V,E)$ is called {\bf topped} if $V \in E$, and {\bf bottomed} if $\emptyset \in E$.
\end{definition}

Topped hypergraphs correspond to $Collapse$ images of formal contexts $(V,E,I)$ with a ``full'' column in $I$, while bottomed will have an empty column.

\begin{definition}[Abstract Simplicial Complex (ASC)] 
  Given a hypergraph $H=(V,E)$ we say that an edge $e \in E$ is a \textbf{abstract simplex} if for every nonempty $e' \subseteq e$ we have that $e' \in E$.
  An \textbf{abstract simplicial complex (ASC)} is a hypergraph $H^\sub = (V,E^\sub)$ where every edge in $E$ is an abstract simplex. 
  Explicitly  for every $e \in E$ and $e' \subseteq e$ then $e' \in E$.
\end{definition}

    \[ E^\sub = \{ f \sub e \}_{e \in E} \]
 We will use square brackets, like $[a,b,c,d]$, to mean the minimal ASC which has $\{a,b,c,d\}$ as a face which is maximal in the subset order,
so that $[a,b,c,d]=\pwr(\{a,b,c,d\})$. And if $e=\{a,b,\ldots,z\} \sub V$ is a set, then use $[e] = [a,b,\ldots,z]$. From here out we will use symbols like $\Sigma$ and $\Xi$ to distinguish ASCs from other hypergraphs.

\begin{definition}[Simplicial Map] \cite{spanier,Hatcher_2002}
  \label{def:asc}
   Suppose $\Sigma$ and $\Xi$ are simplicial complexes with vertex sets $V_\Sigma$ and $V_\Xi$ respectively.
  A function $f : V_\Sigma \to V_\Xi $ on vertices is called a \textbf{simplicial map} $f : \Sigma \to \Xi$ if it transforms each simplex $[v_0,..,v_k]$ of $\Sigma$ into a simplex $[f(v_0),...,f(v_k)]$ of $\Xi$, 
  after removing any duplicate vertices.
  The category $\cat{Asc}$ has ASCs as its objects and simplicial maps as it morphisms. 
\end{definition}

\begin{lemma}
    $\cat{Asc}$ is a full subcategory of $\cat{Hyp}$  
\end{lemma}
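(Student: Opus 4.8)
The plan is to verify the two defining requirements of a full subcategory: first, that every object of $\cat{Asc}$ is an object of $\cat{Hyp}$; and second, the fullness condition, namely that for any two ASCs $\Sigma$ and $\Xi$ the hom-sets agree, $\mathrm{Hom}_{\cat{Asc}}(\Sigma,\Xi)=\mathrm{Hom}_{\cat{Hyp}}(\Sigma,\Xi)$. Since morphisms in both categories are given by functions on vertices that compose as ordinary functions, identities and composition will match automatically once the two notions of morphism are shown to coincide, so no separate functoriality check is needed.

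The object inclusion is immediate from the definitions: an ASC is by definition a hypergraph $\Sigma=(V,E^\sub)$ whose edge set $E^\sub\sub\pwr(V)$ is closed under nonempty subsets. In particular its edges are genuine distinct subsets of $V$, so it is already collapsed and hence is an object of $\cat{Hyp}$.

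The substance is the hom-set equality, which I would obtain by unwinding both morphism conditions for a fixed vertex function $f:V_\Sigma\to V_\Xi$ and observing that they are literally identical. By Definition \ref{def:hyp}, $f$ is a hypergraph morphism exactly when, for every edge $e$ of $\Sigma$, the image set $f(e)=\{f(v):v\in e\}$ is an edge of $\Xi$. By Definition \ref{def:asc}, $f$ is a simplicial map exactly when, for every simplex $[v_0,\dots,v_k]$ of $\Sigma$, the list $[f(v_0),\dots,f(v_k)]$, after deleting duplicate vertices, is a simplex of $\Xi$. Here the key observation is that a simplex $[v_0,\dots,v_k]$ of $\Sigma$ is precisely an edge $e=\{v_0,\dots,v_k\}\in E^\sub_\Sigma$, that deleting duplicates from $[f(v_0),\dots,f(v_k)]$ yields exactly the image set $f(e)$, and that ``being a simplex of $\Xi$'' means ``lying in $E^\sub_\Xi$''. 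Thus both conditions reduce to the single statement that $f(e)\in E^\sub_\Xi$ for every $e\in E^\sub_\Sigma$, so the two classes of morphisms coincide and fullness follows.

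The only point requiring care — and the nearest thing to an obstacle — is the bookkeeping around the bracket notation and the ``removing duplicate vertices'' clause: one must confirm that this deletion operation produces precisely the set-image $f(e)$, so that the set-theoretic hypergraph condition and the list-based simplicial condition describe the same requirement. I expect nothing deeper than this notational reconciliation, after which both the object inclusion and the hom-set equality make $\cat{Asc}$ a full subcategory of $\cat{Hyp}$.
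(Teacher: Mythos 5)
Your proof is correct and follows essentially the same route as the paper: both arguments reduce to the observation that in an ASC the hyperedges and the simplices are the same sets, so the hypergraph-morphism condition and the simplicial-map condition on a vertex function coincide. Your version is in fact slightly more careful than the paper's, since you explicitly reconcile the bracket/duplicate-removal notation with the set-image $f(e)$, a point the paper's proof passes over silently.
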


\begin{proof}
    To prove this we need to show that every simplicial map between two ASCs inside of $\cat{Hyp}$ is a morphism in $\cat{Hyp}$, and also that every morphism in $\cat{Hyp}$ between ASCs is a simplicial map. Given two ASCs $\Sigma$ and $\Xi$ it is obvious that every simplicial map is a morphism in $\cat{Hyp}$ as a simplicial map is a stronger criteria. To show that every morphism from $\Sigma$ to $\Xi$ is a simplicial map we just need to see that every hyperedge in both is by definition a simplex and thus every simplex in $\Sigma$ must get sent to a simplex in $\Xi$ if every hyperedge is sent to a hyperedge.
\end{proof}

\begin{definition}
\label{def:hyprep}
Suppose that $(G,M,I)$ is a formal context.
This can be used to construct a hypergraph $HypRep(G,M,I)=(V,E)$ with $V=G$, whose edges $E$ are indexed by the elements $m\in M$.
Specifically, each hyperedge $e_m = \{m\}'$ is the extent of some $m \in M$.
Duplicates, if they occur, only count once.
\end{definition}

We will show later (in Proposition \ref{prop:ctx_to_hyp} that $HypRep$ is functorial.  Nevertheless, with Definition \ref{def:hyprep} in hand, we can convert formal contexts into ASCs.

\begin{definition}[Dowker Complex]
  \label{def:dowker_complex}
  Given a hypergraph 
  $H = (V,E)$, then its \textbf{Dowker complex} 
  $DH(V,E)$ is the ASC: 
  \begin{equation*}
    E^D = \bigcup_{e \in E} [e].
  \end{equation*}

  We will usually construct Dowker complexes from a context instead of a hypergraph.
  so we will use the notation
  \begin{equation*}
    D(G,M,I) := (DH \circ HypRep)(G,M,I).
  \end{equation*}
\end{definition}

\begin{lemma} (see also \cite{purvine_Hgraph_homology})
$DH : \cat{Hyp} \to \cat{Asc}$ is a covariant functor.
\end{lemma}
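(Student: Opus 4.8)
The plan is to verify the four defining conditions of a covariant functor in turn: that $DH$ is well-defined on objects, well-defined on morphisms, preserves identities, and preserves composition. The key structural observation that organizes the whole argument is that $DH$ acts as the identity on the underlying vertex data of a morphism: a hypergraph morphism $m : H_1 \to H_2$ is carried to the \emph{same} vertex function $m : V_1 \to V_2$, now reinterpreted as a candidate simplicial map $DH(H_1) \to DH(H_2)$. Once well-definedness on morphisms is secured, preservation of identities and composition becomes formal, since in both $\cat{Hyp}$ and $\cat{Asc}$ composition is literally composition of vertex functions and the identity morphism is the identity vertex function.

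First I would check that $DH(V,E) = (V, E^D)$, with $E^D = \bigcup_{e \in E}[e]$, is genuinely an object of $\cat{Asc}$, i.e.\ that $E^D$ is an ASC. This reduces to showing $E^D$ is closed under taking nonempty subsets: if $\sigma \in E^D$ then $\sigma \sub e$ for some $e \in E$, and any $\tau \sub \sigma$ also satisfies $\tau \sub e$, whence $\tau \in E^D$. Thus every element of $E^D$ is an abstract simplex, as required.

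The core step is to show that the vertex function of a hypergraph morphism is a simplicial map. Let $m : H_1 \to H_2$ be a hypergraph morphism and let $\sigma = [v_0,\ldots,v_k]$ be a simplex of $DH(H_1)$, so $\sigma \sub e_1$ for some $e_1 \in E_1$. By the defining property of a hypergraph morphism there is an $e_2 \in E_2$ with $m(e_1) = e_2$; applying $m$ to $\sigma$ gives $m(\sigma) \sub m(e_1) = e_2 \in E_2$, so $m(\sigma)$ is a subset of an edge of $H_2$ and hence $m(\sigma) \in E_2^D$. Since we work with sets, repeated vertices collapse automatically, so $[m(v_0),\ldots,m(v_k)]$ is a simplex of $DH(H_2)$, which is exactly the simplicial-map condition.

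Finally, $DH$ sends $\id_H$ to the identity vertex map, which is $\id_{DH(H)}$, and for composable morphisms $DH(m_2 \circ m_1) = DH(m_2) \circ DH(m_1)$ because all three are the same composite of vertex functions. The only point needing care — and the main thing to get right in the core step — is the downward-closure bookkeeping: the image $m(\sigma)$ need only be \emph{contained} in some edge $e_2$ rather than equal to it, but this is harmless precisely because $E_2^D$ is the subset-closure, so containment in an edge already certifies simplex membership. I expect no genuine obstacle beyond this observation.
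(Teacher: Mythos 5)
Your proposal is correct and follows essentially the same route as the paper's proof: objects are handled by noting that $E^D$ is closed under taking subsets by construction, morphisms by observing that the image $m(\sigma)$ of a simplex $\sigma \sub e_1$ lies in $m(e_1) \in E_2$ and hence is a simplex of the subset-closure, and composition is preserved because in both categories it is literally composition of vertex functions. Your additional explicit check of identity preservation is harmless extra care (the paper's stated functor axioms do not even demand it), and your closing remark about containment versus equality is exactly the point the paper's argument also relies on.
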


\begin{proof}
First of all, given a hypergraph $(V,E)$, which is an object in $\cat{Hyp}$, the Dowker complex $DH(V,E)$ is closed under subsetting by construction.  Thus, $DH(V,E)$ is an object of $\cat{Asc}$. 

Now suppose that $m: (V_1,E_1) \to (V_2,E_2)$ is a hypergraph morphism.  By definition, this means that $m$ corresponds to a function on vertices $m: V_1\to V_2$.  On the other hand, a simplicial map is also a function on vertices.  This defines the action of $DH$ on morphisms.  We show that this is well-defined, and then that it preserves composition covariantly.

Suppose that $[v_0, \dotsc, v_k]$ is a simplex of $DH(V_1,E_1)$.  That means that $\{v_0,\dotsc,v_k\}$ is a subset of some $e_1 \in E_1$.  Since $m$ is a hypergraph morphism,  $m(e_1) \in E_2$.  On the other hand, $m(\{v_0, \dotsc, v_k\}) = \{m(v_0),\dotsc,m(v_k)\}$ is a subset of $m(e_1)$, so $[m(v_0),\dotsc,m(v_k)]$ is a simplex of $DH(V_2,E_2)$.

Finally, notice that composition of hypergraph morphisms is achieved by composing the corresponding vertex functions.  This is exactly the same operation, in the same order, for simplicial maps.  Thus $DH$ preserves the composition fo morphisms.
\end{proof}

\rem{

\begin{definition}
    Let $E^\sub := \bigcup_{f : f \sub e \in E} f$.
\end{definition}

}

   \fig{asc} shows the Dowker complex $DH(V,E)$ for our example, on the left as an Euler diagram, and on the right as its face (edge) order, using compact set notation. 
    
\begin{figure}
  \begin{center}
    \begin{tabular}{cc}
       \subfloat{\includegraphics[width = 3in]      {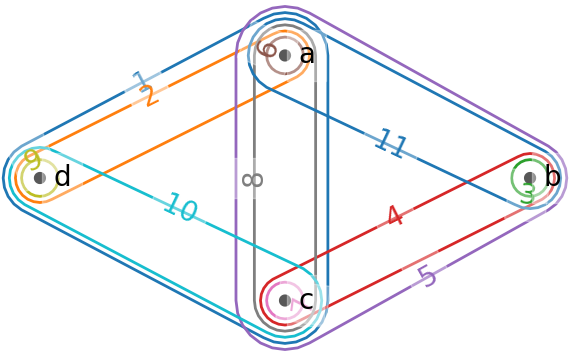} } &
      \subfloat{\includegraphics[width = 3in]{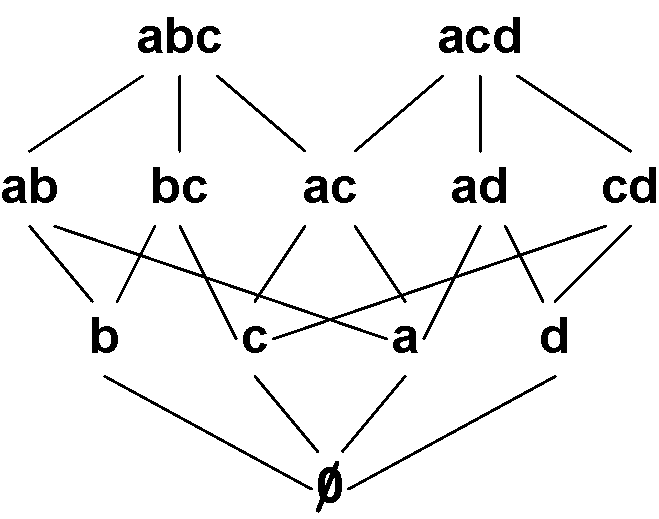}} 
    \end{tabular}
    \caption{(Left) Euler diagram of the Dowker complex $DH(V,E)$ of our example hypergraph. (Right) Its face (edge) order. 
 }
    \label{asc}
  \end{center}
\end{figure}

\rem{

\mypng{.375}{figures/closures}{(Left) The intersection complex $H^\cap$ for our example. (Right) The ASC $H^\Delta$.}

}

\subsection{Galois Connections: Polar Maps and Closure Operators}




Return now to FCA notation and concepts, where we have a formal context $(G,M,I)$ of objects $G$ and attributes $M$. 
Recall then that an object set $A \sub G$ can be the extent of a formal concept if and only if $A'' = A$, where here we use the prime notation from \equ{eq:prime_def}. This type of requirement actually is a special kind of more general closure operator.

\begin{definition}
  Let $P$ be a poset.
  A map $c: P \to P$ is called a \textbf{closure operator on $P$} if, for all $x,y\in P$,
  \begin{enumerate}
    \item $x \leq c(x)$,
    \item $x \leq y$ implies $c(x) \leq c(y)$, and
    \item $c(c(x)) = c(x)$.
  \end{enumerate}
  We also say that $c$ is a \textbf{closure operator on a set $X$} if it is a closure operator on its power set $\pwr(X)$ taken as a poset with the subset order.
\end{definition}

To identify the closure operator provided by the concept lattice operator $\bullet'$, we first  observe that  definition  (\ref{eq:prime_def}) presents $\bullet'$ as  a pair of maps between the power sets $\mathcal{P}(G)$ and $\mathcal{P}(M)$ of objects and attributes taken as posets. 
And additionally,  (\ref{conorder}) defines the order on concepts with subsets for extents and supersets for intents. This implies these are actually order preserving maps from $\mathcal{P}(G)$ to $\mathcal{P}(M)^{op}$.

\begin{definition}
  \label{def:polar}
  Let $(G,M,I)$ be a context.  The \textbf{polar maps} $I^* : \pwr(G) \to \pwr(M)^{op}$ and $I_* :\pwr(M)^{op} \to \pwr(G)$ are given by

  \[
    I^*(A) := A' = \{m \in M : gIm \text{ for all }g\in A\} \]
\[    I_*(B) := B' = \{g \in G : gIm \text{ for all }m\in B\}.
  \]
\end{definition}

These maps are a special type of map between two posets known as a Galois connection.

\begin{definition}
  Let $P$ and $Q$ be ordered sets.
  A pair $(f^*, f_*)$ of maps $f^* : P \to Q$ and $f_* : Q \to P$ (called \textbf{right} and \textbf{left} respectively) is
  a \textbf{Galois connection} between $P$ and $Q$ if, for all $p\in P$ and $q\in Q$, $f^*(p) \leq q$ if and only if $p \leq f_*(q)$.
\end{definition}

It is evident then from (\ref{conorder}) that our pair of polar maps $(I^*,I_*)$ are a Galois connection, and our use of ``Galois pair'' notation $\gal{A}{B}$ for pairs $A \sub G, B \sub M$ evokes this sense. 

Categorically, Galois connections arise when considering posets as categories and looking for adjoint functors between them\cite{Bradley2024,FoBSpD18}. 
For our purposes we will not need this formality and instead just cite the following lemma for properties of such maps.

\begin{lemma} \cite[Lemma 7.26]{davey_priestley_2002}
  Assume $(f^*,f_*)$ is a Galois connection between ordered sets $P$ and $Q$.
  Let $p,p_1,p_2 \in P$ and $q,q_1,q_2 \in Q$.
  Then 
  \begin{itemize}
    \item[(Gal1)] $p \leq f_*(f^*(p))$ and $f_*(f^*(q)) \leq q$,
    \item[(Gal2)] $p_1 \leq p_2 \implies f^*(p_1) \leq f^*(p_2)$ and $q_1 \leq q_2 \implies f_*(q_1) \leq f_*(q_2)$, and
    \item[(Gal3)] $f^*(p) = f^*(f_*(f^*(p)))$ and $f_*(q) = f_*(f^*(f_*(q)))$.
  \end{itemize}
\end{lemma}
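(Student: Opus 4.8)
The plan is to derive all three claims from the single defining biconditional of a Galois connection, namely that $f^*(p) \le q$ if and only if $p \le f_*(q)$, together with reflexivity, transitivity, and antisymmetry of the two posets $P$ and $Q$. I would prove the claims in the order (Gal1), (Gal2), (Gal3), since each stage is consumed by the next: the unit/counit inequalities bootstrap monotonicity, and both together bootstrap the triple-composition identities.

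For (Gal1) I would instantiate the biconditional at a cleverly chosen argument so that one side collapses to a reflexivity statement. To get $p \le f_*(f^*(p))$, set $q = f^*(p)$: the inequality $f^*(p) \le f^*(p)$ holds trivially, and the ``if'' direction of the biconditional then yields $p \le f_*(f^*(p))$. Dually, to get the correctly typed counit clause $f^*(f_*(q)) \le q$, set $p = f_*(q)$: now $f_*(q) \le f_*(q)$ holds, and the ``only if'' direction produces $f^*(f_*(q)) \le q$.

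For (Gal2) I would chain (Gal1) with the biconditional. Given $p_1 \le p_2$, the unit inequality $p_2 \le f_*(f^*(p_2))$ from (Gal1) together with transitivity gives $p_1 \le f_*(f^*(p_2))$; applying the ``only if'' direction with $q = f^*(p_2)$ then yields $f^*(p_1) \le f^*(p_2)$, the monotonicity of $f^*$. The monotonicity of $f_*$ follows by the symmetric argument built on the counit inequality. For (Gal3) I would prove a two-sided inequality and invoke antisymmetry: writing $u = f^*(p)$, the counit clause gives $f^*(f_*(f^*(p))) \le f^*(p)$, while the unit clause $p \le f_*(f^*(p))$ combined with the now-available monotonicity of $f^*$ gives $f^*(p) \le f^*(f_*(f^*(p)))$; antisymmetry forces equality, and $f_* = f_* \circ f^* \circ f_*$ is obtained by the dual computation.

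There is no deep obstacle here — this is a standard adjunction argument — so the ``hard part'' is really bookkeeping. I would keep careful track of which map has which domain and codomain, and in particular read the second clause of (Gal1) as $f^*(f_*(q)) \le q$ so the composite is well typed. Because the polar maps of a formal context are genuinely order-reversing and have been rendered monotone only by passing to $\pwr(M)^{\mathrm{op}}$, I would double-check the direction of each inequality against the $\mathrm{op}$ convention before assembling the chains, to be sure no inequality is silently flipped.
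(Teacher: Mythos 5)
Your proof is correct. The paper does not actually prove this lemma---it is imported verbatim from Davey and Priestley \cite[Lemma 7.26]{davey_priestley_2002}---so there is no in-paper argument to compare against; your derivation (unit and counit from reflexivity via the defining biconditional, monotonicity from the unit plus transitivity, and the triple-composition identities from a two-sided inequality plus antisymmetry) is exactly the standard textbook proof of this fact. You were also right to flag the typing issue in (Gal1): as printed, $f_*(f^*(q)) \le q$ is ill-typed since $f^*$ takes arguments in $P$, and the intended (and cited) statement is $f^*(f_*(q)) \le q$, which is what you prove. One cosmetic slip: your labels ``if'' and ``only if'' for the two directions of the biconditional are swapped relative to the usual parsing of ``$f^*(p) \le q$ if and only if $p \le f_*(q)$'' (for instance, concluding $p \le f_*(f^*(p))$ from the trivially true $f^*(p) \le f^*(p)$ uses the ``only if'' direction), but the instantiations and implications you actually invoke are the correct ones, so the mathematics is unaffected.
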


Viewing the polar maps as a Galois connection allows us to utilize the following result to see that iterating them does indeed form a closure operator on $\pwr(G)$ and $\pwr(M)$ respectively.

\begin{proposition} \cite[Remark 7.27]{davey_priestley_2002}
  \label{prop:gc_closure}
  Let $(f^*,f_*)$ be a Galois connection between ordered sets $P$ and $Q^{op}$.
  Then 
  \begin{enumerate}
    \item $c :=  f_*\circ f^* : P \to P$ and $k :=  f^*\circ f_* : Q \to Q$ are closure operators.
    \item Let
$$
        P_c := \{ p \in P : f_*(f^*(p)) = p\}
      \text{ and }
        Q_k := \{ q \in Q : f^*(f_*(q)) = q \}.
$$
      Then $f^* : P_c \to Q_k^{op}$ and $f_* : Q_k^{op} \to P_c$ are mutually inverse order isomorphisms.
  \end{enumerate}
\end{proposition}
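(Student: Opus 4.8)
The plan is to derive everything mechanically from the three properties (Gal1)--(Gal3) of the preceding lemma, applied to $(f^*,f_*)$ regarded as a Galois connection between $P$ and $Q^{op}$. The only thing requiring care is that every inequality the lemma asserts in its second coordinate is an inequality in $Q^{op}$, so it must be reversed before it is read as a statement about $\le$ in $Q$. Keeping this variance straight is what makes both $c$ and $k$ come out as closure operators rather than one closure and one interior operator.

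For part (1), to show $c=f_*\circ f^*$ is a closure operator on $P$ I would check the three axioms in turn. Monotonicity is immediate because (Gal2) makes $f^*:P\to Q^{op}$ and $f_*:Q^{op}\to P$ order preserving, so their composite $P\to P$ is order preserving. Inflationarity $p\le c(p)$ is precisely the first inequality of (Gal1). Idempotence $c(c(p))=c(p)$ follows by substituting $q=f^*(p)$ into the identity $f_*(q)=f_*(f^*(f_*(q)))$ of (Gal3). For $k=f^*\circ f_*:Q\to Q$ the argument is symmetric, and this is exactly where the $Q^{op}$ convention does its work: the second (counit) inequality of (Gal1), namely $f^*(f_*(q))\le q$ as read in $Q^{op}$, becomes $q\le_Q f^*(f_*(q))=k(q)$ once transported back to $Q$, which is the inflationary inequality needed for a closure operator. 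Monotonicity and idempotence of $k$ come from (Gal2) and (Gal3) as before.

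For part (2), first I would show the maps land in the claimed fixed-point sets. Applying (Gal3) gives $k(f^*(p))=f^*(f_*(f^*(p)))=f^*(p)$ for every $p\in P$, so $f^*$ sends all of $P$, and in particular $P_c$, into $Q_k$; dually $f_*$ sends $Q_k$ into $P_c$. Next, mutual inverseness is definitional on the fixed-point sets: for $p\in P_c$ we have $f_*(f^*(p))=c(p)=p$, and for $q\in Q_k$ we have $f^*(f_*(q))=k(q)=q$, so $f_*\circ f^*=\id_{P_c}$ and $f^*\circ f_*=\id_{Q_k}$. Finally, since the restricted maps $f^*:P_c\to Q_k^{op}$ and $f_*:Q_k^{op}\to P_c$ are mutually inverse bijections that are each order preserving (again by (Gal2) read into $Q^{op}$), they are order isomorphisms.

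The work here is entirely bookkeeping, and there is no genuine obstacle beyond keeping the variance consistent. The single point I would flag explicitly, since it is the whole reason the proposition can claim that both $c$ and $k$ are closure operators, is the systematic use of $Q^{op}$ when reading (Gal1): this is precisely what converts the counit inequality into the inflationary inequality $q\le_Q k(q)$.
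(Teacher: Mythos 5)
Your proof is correct and takes essentially the same route as the source the paper relies on: the paper gives no proof of this proposition (it is quoted from Davey--Priestley, Remark 7.27), and the standard derivation there is exactly your mechanical verification of the closure axioms and the fixed-point correspondence from (Gal1)--(Gal3), with the $Q^{op}$ variance handled as you describe. One incidental merit: your reading of the counit inequality as $f^*(f_*(q)) \le q$ silently corrects a typo in the paper's statement of (Gal1), which prints it as $f_*(f^*(q)) \le q$ (a composite that does not even typecheck).
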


The pairs of isomorphic closed sets under the Galois correspondence formed from the polar maps of a context are the formal concepts. Specifically, for every set of objects $A \sub G$, we know that $c(A)$ is the extent of a concept denoted 
    \[ \mathfrak{B}^*(A) = \gal{c(A)}{I^*(A)} = \gal{A''}{A'} \in \mathfrak{B}(G,M,I)   \] 
    as a Galois pair; and dually for every set of attributes $B \sub M$, $k(B)$ is the intent of the concept
\[ \mathfrak{B}_*(B) = \gal{I_*(B)}{k(B)} = \gal{B'}{B''} \in \mathfrak{B}(G,M,I),    \]
where we introduce the functions $\func{\mathfrak{B}^*}{2^G}{\mathfrak{B}(G,M,I})$ and $\func{\mathfrak{B}_*}{2^M}{\mathfrak{B}(G,M,I})$ taking object and attribute sets respectively to their formal concepts.

These closure operations are also evident in our example. Referring to \sec{eg:running}, Table \ref{tab:running}, and \fig{fig:concept_lattice},  Table \ref{tab:cona} shows some examples for object sets, and Table \ref{tab:conb} shows some examples for attribute sets (using compact set notation).

\begin{table}[]
    \centering
    \begin{tabular}{c||c|c|c}
         $A \sub G$ & $I^*(A) = A'$ & $I_*(I^*(A)) = c(A) = A''$ &  $\mathfrak{B}^*(A) = \gal{c(A)}{I^*(A)} = \gal{A''}{A'}$ \\
         \hline
$d$ & $012$ & $ad$ & $\gal{ad}{012}$  \\
$ad$ & $012$ & $ad$ & $\gal{ad}{012}$  \\
$ab$ & $5$ & $abc$ & $\gal{abc}{5}$   \\
    \end{tabular}
    \caption{Polar maps and concepts for some object sets, using compact set notation.}
    \label{tab:cona}
\end{table}

\begin{table}[]
    \centering
    \begin{tabular}{c||c|c|c}
         $B \sub M$ & $I_*(B) = B'$ & $I^*(I_*(B)) = k(B) = B''$ &  $\mathfrak{B}_*(B) = \gal{I_*(B)}{k(B)} = \gal{B'}{B''}$ \\
         \hline
$2$ & $ad$ & $012$ & $\gal{ad}{012}$  \\
$34$ & $b$ & $345$ & $\gal{b}{345}$  \\
$1$ & $acd$ & $1$ & $\gal{acd}{1}$   \\
    \end{tabular}
    \caption{Polar maps and concepts for some attribute sets, using compact set notation.}
    \label{tab:conb}
\end{table}



\section{Concept Lattices and Hypergraphs}
\label{sec:hypergraph_complexes}

\subsection{The Category of Formal Contexts}

Let us begin with the definition of $\cat{Ctx}$, 
which has the same objects as $\cat{Rel}$ and thus also the same as $\cat{MHyp}$. What is different is what we ask of the morphisms. We ask that they preserve concepts instead of just the incidence relation or sending hyperedges to hyperedges.

\begin{definition}
  \label{def:ctx}
  The objects of the category $\cat{Ctx}$ consist of formal contexts.
  Each object is a triple $(G,M,I)$ of two sets and a Boolean characteristic matrix, respectively.

  A morphism $m: (G_1,M_1,I_1) \to (G_2,M_2,I_2)$ consists of two functions $f: G_1 \to G_2$ and $g: M_1 \to M_2$ such that
  \begin{equation*}
    f(A)' = g(A') \text{ and } g(B)' = f(B'),
  \end{equation*}
  for all $A \subseteq G_1$ and all $B \subseteq M_1$,
  recalling that by $f(A)$ (etc.), we are referring to the image of the set $A$ under $f$.
  
  Composition of morphisms is defined by composing the respective pairs of functions on objects and attributes.
  Namely, if $m_1 : (G_1,M_1,I_1) \to (G_2,M_2,I_2)$ is given by $f_1: G_1 \to G_2$ and $g_1 : M_1 \to M_2$ and
  $m_2 : (G_2,M_2,I_2) \to (G_3,M_3,I_3)$ is given by $f_2: G_2 \to G_3$ and $g_2 : M_2 \to M_3$,
  then $(m_2 \circ m_1)$ is given by $(f_2 \circ f_1) : G_1 \to G_3$ and $(g_2 \circ g_1) : M_1 \to M_3$.
\end{definition}

\begin{corollary}
\label{cor:ctx_subcat_mhyp}
    $\cat{Ctx}$ is a non-full subcategory of $\cat{MHyp}$, since the objects are the same, and the morphisms of $\cat{Ctx}$ merely require a stricter condition to be satisfied.
\end{corollary}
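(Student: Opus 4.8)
The plan is to verify the three ingredients a subcategory claim requires — coincidence of objects, containment of the morphism classes, and agreement of composition and identities — and then separately exhibit a single morphism witnessing non-fullness. Throughout I identify a context $(G,M,I)$ with the multi-hypergraph $(V,E,I)$ via $V=G$ and $E=M$, under which each attribute $m \in M$ names the hyperedge whose vertex set is $\{m\}'$; this is exactly the bookkeeping that lets the two definitions be compared. The coincidence of objects is immediate, since both $\cat{Ctx}$ and $\cat{MHyp}$ inherit the objects of $\cat{Rel}$ verbatim.

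The crux of the subcategory half is to show every $\cat{Ctx}$ morphism is a $\cat{MHyp}$ morphism. A $\cat{Ctx}$ morphism $(f,g)$ satisfies $g(B)' = f(B')$ for all $B \sub M_1$. Specializing this to a singleton $B = \{m\}$ and using $g(\{m\}) = \{g(m)\}$ yields $\{g(m)\}' = f(\{m\}')$, which under the identification above is precisely the defining equation $f_V(e') = (f_E(e))'$ of a $\cat{MHyp}$ morphism for the edge $e$ named by $m$. Hence the $\cat{Ctx}$ condition, being imposed on all subsets rather than only singletons, is strictly stronger, and its morphisms form a subclass of those of $\cat{MHyp}$. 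Since composition and identities in both categories are defined componentwise on the underlying pair of functions, they agree on these common morphisms, completing the verification that $\cat{Ctx}$ is a subcategory.

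For non-fullness I would exhibit one $\cat{MHyp}$ morphism between contexts that fails the context condition. A minimal candidate takes $C_1 = (G_1,M_1,I_1)$ with $G_1 = \{a,b\}$, $M_1 = \{1\}$, and $I_1 = \{(a,1)\}$ — so $b$ is an isolated vertex and the single edge is $\{1\}' = \{a\}$ — and $C_2 = (G_2,M_2,I_2)$ with $G_2 = \{c\}$, $M_2 = \{1\}$, $I_2 = \{(c,1)\}$. With $f : a,b \mapsto c$ and $g = \id$, the sole edge $\{a\}$ maps onto $\{c\} = \{g(1)\}'$, so $(f,g)$ is a valid $\cat{MHyp}$ morphism; but choosing $A = \{b\}$ gives $A' = \emptyset$ and thus $g(A') = \emptyset$, whereas $f(A) = \{c\}$ has $f(A)' = \{1\} \neq \emptyset$, so the equation $f(A)' = g(A')$ fails and $(f,g)$ is not a $\cat{Ctx}$ morphism.

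I expect no serious obstacle, as the content is a careful translation between the hypergraph notation (edges and their vertex sets $e'$) and the context notation (the derivation operators $\bullet'$ on $G$ and $M$). The two points demanding care are ensuring the singleton specialization genuinely recovers the hypergraph condition, and confirming that the non-fullness example satisfies the edge-surjectivity requirement of $\cat{MHyp}$ while breaking the all-subsets requirement of $\cat{Ctx}$; the isolated vertex $b$ is precisely what drives that failure.
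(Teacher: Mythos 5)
Your proposal is correct and follows the same route the paper takes: objects coincide because both categories inherit them from $\cat{Rel}$, and specializing the $\cat{Ctx}$ condition $g(B)' = f(B')$ to singletons $B = \{m\}$ recovers exactly the $\cat{MHyp}$ condition $f_V(e') = (f_E(e))'$, which is precisely the ``stricter condition'' assertion the paper leaves implicit. In fact you go one step beyond the paper, which never exhibits a witness for non-fullness: your example with the isolated vertex $b$ (where $f(A)' = \{1\}$ but $g(A') = \emptyset$ for $A = \{b\}$) is a valid $\cat{MHyp}$ morphism that fails the $\cat{Ctx}$ condition, so it cleanly closes the gap.
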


In many situations, one is concerned with the objects \emph{only},
and in this case, formal contexts, binary relations, Boolean matrices, and multi hypergraphs are indistinguishable.
However, the appropriate transformations---the morphisms---in each of these settings are quite different.
For instance, if one considers Boolean matrices, one might wish to treat two matrices under similarity transformations as isomorphic. 
This isomorphism may be inappropriate for the pair of binary relations generated by these two matrices.
In our case, the morphisms of $\cat{Ctx}$ are chosen to connect formal contexts with their concepts. The authors will also note that the morphisms used in our category of contexts differ from those found in other literature \cite{Krtzsch2005}. 

\begin{lemma}
  \label{lem:ctx_morphisms}
  Morphisms of $\cat{Ctx}$ preserve concepts.
  Explicitly, if $(f,g) : (G_1,M_1,I_1) \to (G_2,M_2,I_2)$ is a morphism of $\cat{Ctx}$, then $f(A'') = f(A)''$ and $g(B'') = g(B)''$
  for all $A \subseteq G_1$ and all $B \subseteq M_1$.
\end{lemma}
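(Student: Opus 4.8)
The plan is to derive both identities directly from the two defining conditions of a $\cat{Ctx}$-morphism, namely $f(A)' = g(A')$ and $g(B)' = f(B')$, by chaining them through a single application of the prime operator. The essential observation is that the double-prime closure $A''$ is nothing but the prime applied twice, so I can interleave the two morphism conditions---the one governing object sets and the one governing attribute sets---to push $f$ and $g$ through this closure.

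First I would establish $f(A'') = f(A)''$ for an arbitrary $A \subseteq G_1$. Starting from the first defining condition applied to $A$, namely $f(A)' = g(A')$, I apply the prime operator to both sides to obtain $f(A)'' = \bigl(g(A')\bigr)'$. Since $A' \subseteq M_1$, I may now invoke the second defining condition on the attribute set $B = A'$, which gives $\bigl(g(A')\bigr)' = f\bigl((A')'\bigr) = f(A'')$. Combining the two equalities yields $f(A)'' = f(A'')$, as desired.

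The companion identity $g(B'') = g(B)''$ follows by the symmetric argument with the roles of the two conditions exchanged: apply the second condition to $B$ to get $g(B)' = f(B')$, take primes to reach $g(B)'' = \bigl(f(B')\bigr)'$, and then apply the first condition to the object set $A = B'$ to rewrite $\bigl(f(B')\bigr)' = g\bigl((B')'\bigr) = g(B'')$.

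I expect no genuine obstacle here beyond the bookkeeping of which condition to invoke and on which set. The only real subtlety is recognizing that the two morphism conditions must be used \emph{in tandem}, with the second applied to the intermediate attribute set $A'$ (respectively, the first applied to the intermediate object set $B'$), rather than attempting to force a single condition to carry the whole argument. Once that pairing is spotted, both identities collapse to a two-line calculation.
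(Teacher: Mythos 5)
Your proposal is correct and is essentially the paper's own argument: both proofs chain the two defining conditions of a $\cat{Ctx}$-morphism through the intermediate set $g(A')'$ (applying the first condition to $A$ and the second to the attribute set $A'$), and both dispose of the companion identity by the symmetric computation. The only difference is the order in which the two equalities are stated, which is immaterial.
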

\begin{proof}
  This is simply calculation.
  If $A'=B$, the definition of a $\cat{Ctx}$ means that $f(B') = g(B)'$, which implies $f(A'') = g(A')'$.
  The other condition on a morphism asserts that $f(A)' = g(A')$, which implies $f(A)'' = g(A')'$.
  Therefore $f(A'') = f(A)''$.
  The other statement follows \emph{mutatis mutandis}.
\end{proof}

We remind the reader that there is an inclusion of categories $\cat{Ctx} \to \cat{Rel}$.

\begin{proposition}
\label{prop:ctx_sub_rel}
  Suppose that $(f,g) : (G_1,M_1,I_1) \to (G_2,M_2,I_2)$ is a morphism of $\cat{Ctx}$.
  If $a \in G_1$ and $b \in M_1$ such that $a I_1 b$, then $f(a) I_2 g(b)$.
\end{proposition}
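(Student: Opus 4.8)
The plan is to reduce the claim to the defining equations of a $\cat{Ctx}$-morphism evaluated on a singleton, exploiting the fact that an incidence is nothing but a membership in the polar of a one-element set. First I would record the translation: unwinding the prime notation of \equ{eq:prime_def} on the singleton $\{a\}$ gives $\{a\}' = \{m \in M_1 : a I_1 m\}$, so the hypothesis $a I_1 b$ is equivalent to the membership $b \in \{a\}'$. This is the one genuinely conceptual move---rewriting a pointwise incidence fact in the language of the polar maps---after which the morphism axioms do all the work.

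Next I would apply the first defining condition of Definition \ref{def:ctx}, namely $f(A)' = g(A')$, specialized to $A = \{a\}$. Since $f$ is an ordinary function the image of the singleton is again a singleton, $f(\{a\}) = \{f(a)\}$, so the condition reads $\{f(a)\}' = g(\{a\}')$. Pushing the membership $b \in \{a\}'$ forward through $g$ then yields $g(b) \in g(\{a\}') = \{f(a)\}'$.

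Finally I would unwind the prime on the right-hand singleton, now taken in the codomain context $(G_2,M_2,I_2)$: here $\{f(a)\}' = \{m \in M_2 : f(a) I_2 m\}$, so the membership $g(b) \in \{f(a)\}'$ says exactly $f(a) I_2 g(b)$, which is the desired conclusion. Equivalently one could run the mirror-image argument from the second defining condition $g(B)' = f(B')$ with $B = \{b\}$ and the equivalence $a I_1 b \iff a \in \{b\}'$; both routes terminate at the same statement.

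I do not expect a substantive obstacle: the content is simply that the prime-preservation conditions of $\cat{Ctx}$, read on singletons, are strictly stronger than the incidence-preservation condition that defines a morphism of $\cat{Rel}$, so this proposition is the concrete witness for the inclusion $\cat{Ctx} \hookrightarrow \cat{Rel}$. The only care required is bookkeeping: keeping the prime in the domain context distinct from the prime in the codomain context, and remembering that a function sends a singleton to a singleton.
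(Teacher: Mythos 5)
Your proof is correct and follows essentially the same route as the paper's: both apply the defining condition $f(A)' = g(A')$ of a $\cat{Ctx}$-morphism to the singleton $A = \{a\}$, observe that $b \in \{a\}'$ forces $g(b) \in g(\{a\}') = \{f(a)\}'$, and read off $f(a)\, I_2\, g(b)$. Your bookkeeping about which context's prime is in play, and the noted mirror-image argument via $B = \{b\}$, are fine additions but do not change the substance.
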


\begin{proof}
  Suppose $aI_1b$.  The intent of $a$ is $\{a\}' = \{m \in M_1 : a I_1 m\}$.
  We assumed that $\{a\}'$ is nonempty by hypothesis, since $b \in \{a\}'$.
  Hence $g(\{a\}')$ is nonempty, and contains at least $g(b)$.
  On the other hand, assuming that we have a $\cat{Ctx}$ morphism means that
  $f(a)' = \{ m \in M_2 : f(a) I_2 m \} = g(\{a\}').$
  Hence $f(a)$ is related to $g(b)$, or in other words $f(a) I_2 g(b)$.
\end{proof}

Recalling Definition \ref{def:hyprep}, the following establishes the relationship between formal contexts and hypergraphs.

\begin{proposition}
\label{prop:ctx_to_hyp}
  Each morphism in $\cat{Ctx}$ given by $(f,g) : (G_1,M_1,I_1) \to (G_2,M_2,I_2)$ induces a hypergraph morphism $HypRep(G_1,M_1,I_1) \to HypRep(G_2,M_2,I_2)$ simply by acting on the vertices.
  Therefore, $HypRep : \cat{Ctx} \to \cat{Hyp}$ is a covariant functor.
\end{proposition}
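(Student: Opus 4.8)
The plan is to take the vertex map of the induced hypergraph morphism to be $f$ itself, which is legitimate because $HypRep$ leaves the vertex set unchanged: by Definition \ref{def:hyprep}, $HypRep(G_i,M_i,I_i)$ has vertex set $V_i = G_i$. So the candidate hypergraph morphism is just $f : G_1 \to G_2$ viewed as a function on vertices, and everything reduces to checking that this $f$ satisfies the edge condition of Definition \ref{def:hyp}.

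The crux is exactly that edge condition: every edge of $HypRep(G_1,M_1,I_1)$ must be carried by $f$ (acting pointwise) onto an edge of $HypRep(G_2,M_2,I_2)$. By construction every edge of the source is of the form $\{m\}' \in E_1$ for some attribute $m \in M_1$. I would apply the $\cat{Ctx}$ morphism axiom $g(B)' = f(B')$ with the singleton $B = \{m\}$. Since $g(\{m\}) = \{g(m)\}$, this reads $\{g(m)\}' = f(\{m\}')$, where the left-hand prime is computed in $I_2$ and the right-hand prime in $I_1$. The right-hand side is precisely the image of the edge $\{m\}'$ under $f$, and the left-hand side is the edge of $HypRep(G_2,M_2,I_2)$ indexed by the attribute $g(m) \in M_2$. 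Hence $f$ sends each edge onto an edge, so it is a genuine hypergraph morphism.

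Having set $HypRep(f,g) := f$ and shown it is well-defined, functoriality is immediate and routine. Since composition in $\cat{Ctx}$ is componentwise, $(f_2,g_2)\circ(f_1,g_1) = (f_2\circ f_1,\, g_2\circ g_1)$, and composition in $\cat{Hyp}$ is composition of vertex functions, I get $HypRep((f_2,g_2)\circ(f_1,g_1)) = f_2\circ f_1 = HypRep(f_2,g_2)\circ HypRep(f_1,g_1)$. Likewise $HypRep$ sends the identity $(\id_{G},\id_{M})$ to $\id_{G}$, which is the identity hypergraph morphism. This establishes that $HypRep$ is a covariant functor.

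I do not anticipate a serious obstacle; the single point requiring care is choosing the correct half of the $\cat{Ctx}$ morphism axiom, namely the condition on attribute sets $g(B)' = f(B')$ rather than the one on object sets, and recognizing that specializing it to singletons $B = \{m\}$ is exactly what matches the attribute-indexing of the edges in $HypRep$. A minor subtlety worth one remark is that $HypRep$ discards duplicate extents, but this causes no trouble: the identity $f(\{m\}') = \{g(m)\}'$ holds for whichever attribute $m$ realizes a given extent, so the image is always a bona fide edge of the target independent of how edges are indexed.
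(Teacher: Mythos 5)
Your proposal is correct and follows essentially the same route as the paper's proof: both identify each edge of $HypRep(G_1,M_1,I_1)$ as $\{m\}'$ for some $m \in M_1$, apply the $\cat{Ctx}$ morphism condition $f(B') = g(B)'$ to the singleton $B = \{m\}$ to conclude $f(\{m\}') = \{g(m)\}'$ is an edge of the target, and then observe that composition in both categories is composition of the underlying (vertex) functions in the same order. Your additional remarks on identity preservation and on duplicate extents are fine elaborations but do not change the argument.
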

\begin{proof}
  Suppose that $e$ is an edge of $HypRep(G_1,M_1,I_1)$.
  This means that there is an $m\in M_1$ such $e= \{m\}'$.
  Since $(f,g)$ is a $\cat{Ctx}$ morphism, we have that $f(\{m\}') = g(m)'$.
  Therefore, $f(\{m\}')$ is in fact that hyperedge defined by $g(m)$ in $HypRep(G_2,M_2,I_2)$.
  
  Composition of $\cat{Ctx}$ morphisms involves composition of functions, and so does composition hypergraph morphisms.
  Both compositions occur in the same order, so we therefore have a covariant functor.
\end{proof}

\subsection{Concept Lattices} \label{sec:cl}

Assume a binary relation $R$ taken as a formal context $C=(G,M,I)$. Then for each pair of concepts $K_1=(A_1,B_1), K_2=(A_2,B_2) \in \setsys{B}(G,M,I)$, we define an order $K_1 \le K_2$ as $A_1 \subseteq A_2$. It then follows that $B_1 \supseteq B_2$. 

\begin{proposition} \cite[Proposition 3.6]{davey_priestley_2002} \label{prop:context_complete_lattice}
  Let $(G,M,I)$ be a context with concepts $K_j = (A_j,B_j) \in \cl, 1 \le j \le |\cl|$.
  Then $(\cl;\leq)$ is a complete lattice in which join and meet are given by the following:
  \begin{equation*}
    \bigvee_{K_j \in \cl(G,M,I)}(A_j,B_j) = \left(\left(\bigcup_{K_j \in \cl(G,M,I)}A_j\right)'',\bigcap_{K_j \in \cl(G,M,I)}B_j \right),
  \end{equation*}
  and 
  \begin{equation*}
    \bigwedge_{K_j \in \cl(G,M,I)}(A_j,B_j) = \left(\bigcap_{K_j \in \cl(G,M,I)}A_j,\left(\bigcup_{K_j \in \cl(G,M,I)}B_j\right)'' \right).
  \end{equation*}
\end{proposition}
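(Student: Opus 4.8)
The plan is to verify directly that the two displayed pairs are concepts and that they satisfy the universal properties of meet and join in the order $(\cl;\leq)$; completeness then follows because the same arguments apply verbatim to an arbitrary subfamily $\{K_j\}_{j\in\mathcal{J}}\subseteq\cl$ (the displayed formulas being the case where $\mathcal{J}$ indexes all of $\cl$). The only computational input I need is the pair of ``de Morgan'' identities for the polar maps,
\begin{equation*}
  \left(\bigcup_j A_j\right)' = \bigcap_j A_j' \qquad\text{and}\qquad \left(\bigcup_j B_j\right)' = \bigcap_j B_j',
\end{equation*}
each of which is immediate from Definition \ref{def:polar}: an attribute $m$ lies in $(\bigcup_j A_j)'$ exactly when $gIm$ for every $g\in\bigcup_j A_j$, i.e.\ when $m\in A_j'$ for every $j$, and dually.

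First I would check the proposed meet is a concept. Writing $A=\bigcap_j A_j$ and using $A_j=B_j'$ with the first identity gives $A=\bigcap_j B_j'=(\bigcup_j B_j)'$, so $A$ has the form $C'$ for $C=\bigcup_j B_j$. Since the three-fold prime collapses to a single prime, $C'''=C'$ (a consequence of (Gal3)), we get $A''=(C')''=C'''=C'=A$, so $A$ is an extent, and its intent is $A'=C''=(\bigcup_j B_j)''$. Hence $\bigl(\bigcap_j A_j,\,(\bigcup_j B_j)''\bigr)$ is a genuine concept. The analogous computation using the second identity shows the proposed join equals $\mathfrak{B}^*\!\bigl(\bigcup_j A_j\bigr)=\bigl((\bigcup_j A_j)'',\,(\bigcup_j A_j)'\bigr)$ with $(\bigcup_j A_j)'=\bigcap_j B_j$, hence is likewise a concept.

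Next I would verify the universal properties, which reduce to the fact that the order on $\cl$ is inclusion of extents. For the meet, $\bigcap_j A_j\subseteq A_i$ for every $i$ shows the pair is a lower bound, and if a concept $(C,D)$ satisfies $C\subseteq A_i$ for all $i$ then $C\subseteq\bigcap_j A_j$, so it lies below the proposed meet; thus the pair is the greatest lower bound. Dually, extensivity of the closure $c=(\cdot)''$ gives $A_i\subseteq\bigcup_j A_j\subseteq(\bigcup_j A_j)''$, so the proposed join is an upper bound; and if $(C,D)$ satisfies $A_i\subseteq C$ for all $i$, then $\bigcup_j A_j\subseteq C$, whence the monotone idempotent closure yields $(\bigcup_j A_j)''\subseteq C''=C$ (using $C''=C$ because $C$ is an extent), so the pair lies below $(C,D)$ and is the least upper bound. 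Since arbitrary meets and joins exist, $(\cl;\leq)$ is a complete lattice.

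There is no serious obstacle here: the entire content is the de Morgan identity together with careful bookkeeping of primes. The one subtlety worth flagging is to keep straight which side of the Galois connection is being closed, and to invoke $C''=C$ precisely at the step where the closure of a union of extents must be shown to remain below an arbitrary extent $C$.
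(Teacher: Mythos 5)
The paper does not actually prove this proposition---it is quoted directly from Davey--Priestley (their Proposition 3.6) with no internal argument---so there is nothing to compare against except the standard reference, and your direct verification is correct and is essentially that standard argument: the de Morgan identity for the polar maps, the triple-prime collapse $C'''=C'$ (Gal3), and the closure-operator bookkeeping are exactly the needed ingredients, and your observation that the same computation applies to an arbitrary subfamily (which is what completeness actually requires, the displayed formulas being the special case of the full family) is the right reading of the statement.
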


We can hereby also call $\setsys{B}(G,M,I)$ the {\bf concept lattice} of the context $C$ with this order in mind. The concept lattice for our example has already been shown above in Figure \ref{fig:concept_lattice}, with extent $A \sub G$ and intent $B \sub M$  as a Galois pair. The ``full context lattice'' is on the left-hand side, and a reduced form on the right, which we now describe.


For every concept $\gal{A}{B} \in \cl$, its extent $A$ contains the unions of the extents below it, and its intent $B$ the unions of the intents above it:
    \[ A \supseteq \bigcup_{(A_*,B) < K} A_*, \qquad
         B \supseteq \bigcup_{(A,B^*) > K} B^*    \]
To cover the case when the above are not equality, for every concept $K=\gal{A}{B} \in \cl$ we can denote
    \[ A^- = A \setminus \left( \bigcup_{(A_*,B) < K} A_* \right) \sub G,   \qquad
        B^- = B \setminus \left( \bigcup_{(A,B^*) > K} B^* \right) \sub M,   \]
    and create a new {\bf reduced concept lattice} where each node $K \in \cl$ now shows the reduced pair $K^-=\gal{A^-}{B^-}$. 
    In turn, the full concept lattice can be reconstructed from the reduced as follows: for each node in the lattice, union the objects of the node with those seen below to form the extent, and union the attributes of the node and those seen above to form the intent.

The reduced concept lattice is shown on the right of \fig{fig:concept_lattice}. Note that it's common for many nodes in the reduced concept lattice to have empty intents, empty extents, or both. 
For nodes with empty extents (intents) in the reduced lattice, their extents (intents) in the full lattice can then be read off as the union of the extents (intents) of their children (parents). 
    In our example for $K=\gal{ad}{012}$, we have $K^-=\gal{d}{02}$, while for $K=\gal{abc}{5}$ we have $K^-=\gal{}{5}$, while  $K=\gal{ac}{15}$ is a ``fully blank'' node whose extent is an explicit union of its two children, and intent that of its two parents. 

\rem{

\begin{figure}
  \begin{center}
    \begin{tabular}{cc}
      \subfloat[The fully labeled concept lattice]{\includegraphics[width = 3in]{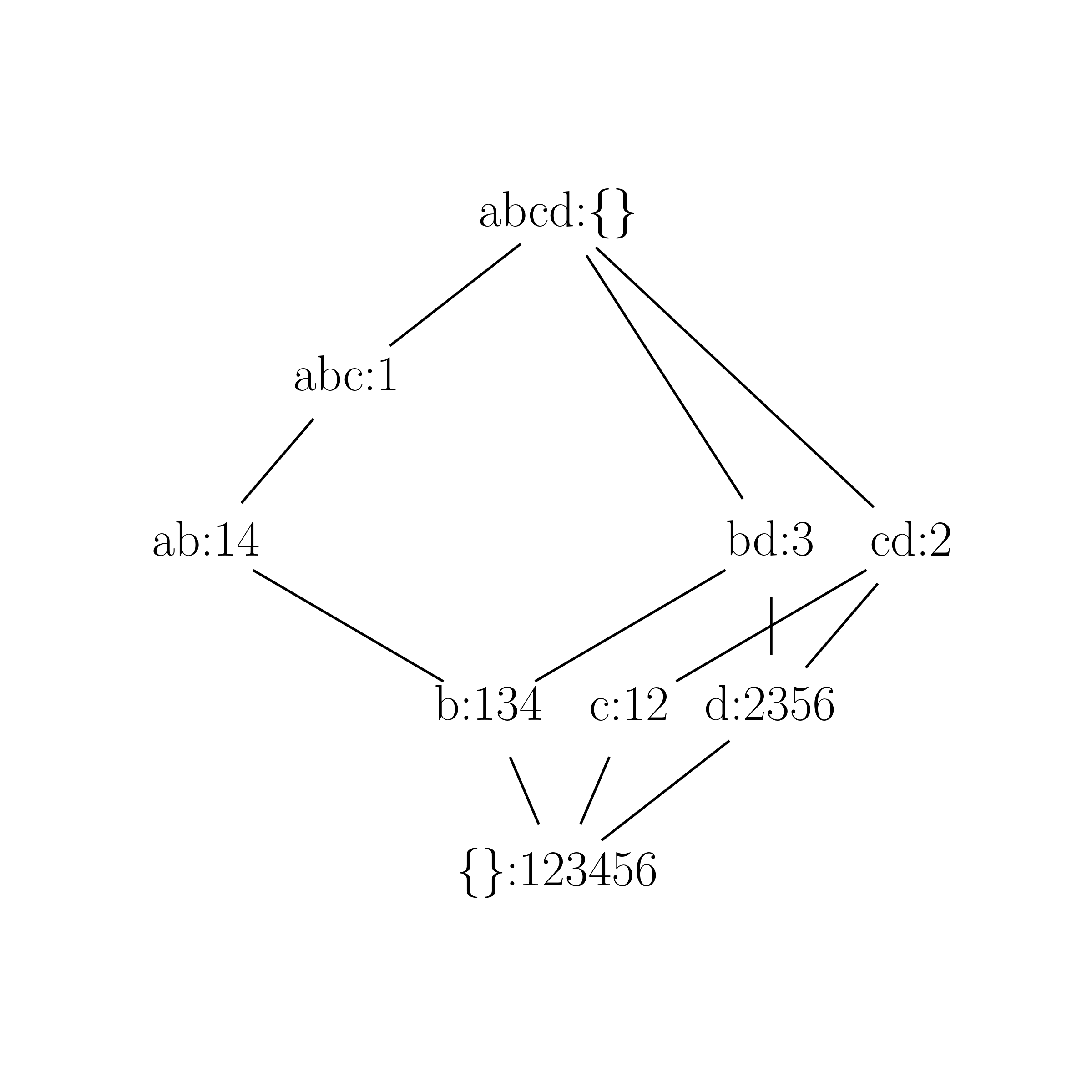}} &
      \subfloat[The reduced label concept lattice]{\includegraphics[width = 3in]{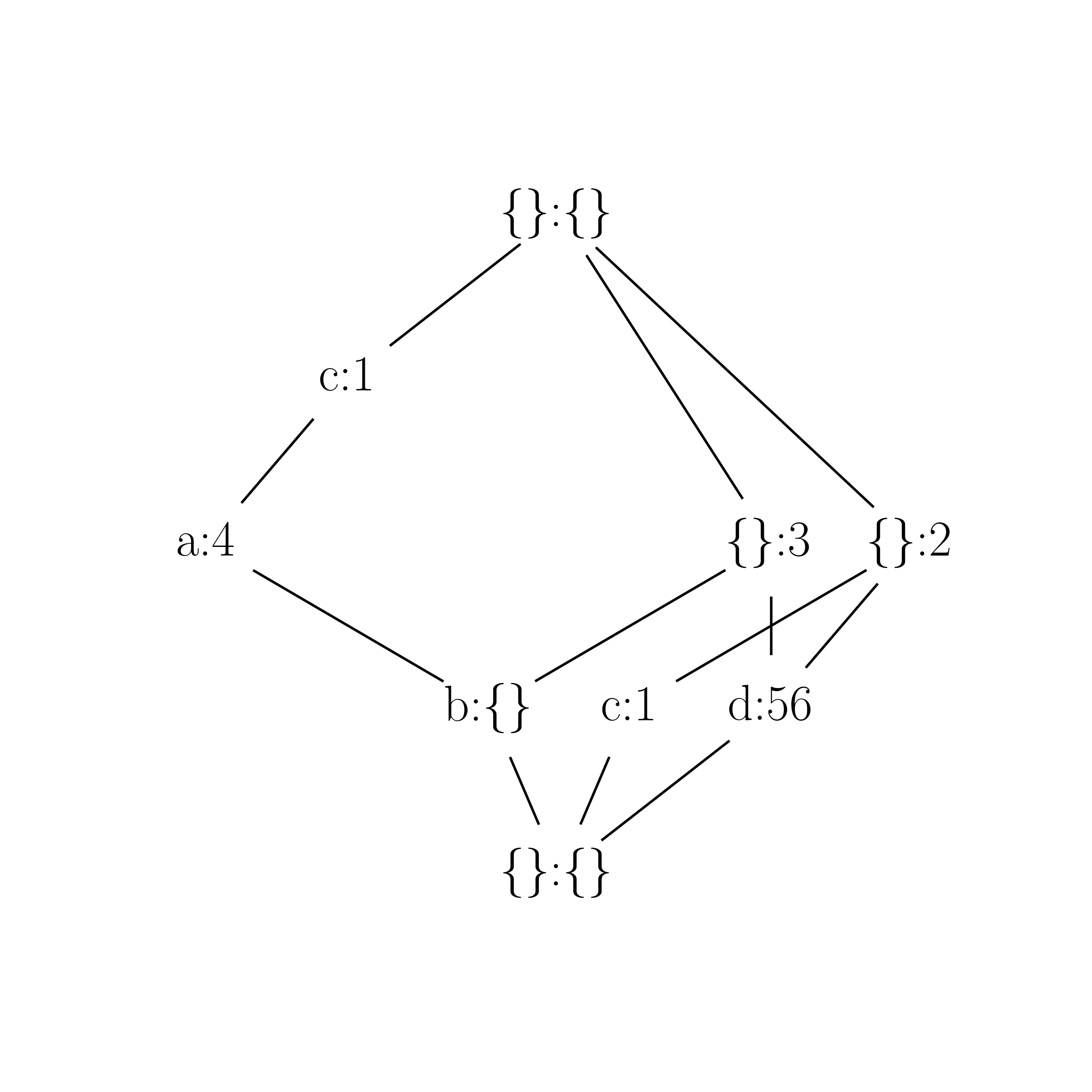}} 
    \end{tabular}
    \caption{Example concept lattices.}
    \label{fig:running_concept}
  \end{center}
\end{figure}

}

\begin{corollary}[To Proposition \ref{prop:context_complete_lattice}]
  For the context $(G,M,I)$, the set of extents $\setsys{B}_M$ and dually the set of intents $\setsys{B}_G$ are closed under intersection.
\end{corollary}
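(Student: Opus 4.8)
The plan is to read the result directly off the join and meet formulas of Proposition \ref{prop:context_complete_lattice}, since the corollary is explicitly flagged as a consequence of it. Recall that a subset $A \subseteq G$ is an extent precisely when it is the first coordinate of some concept, equivalently when $A'' = A$; dually, $B \subseteq M$ is an intent precisely when $B'' = B$. So the two claims amount to showing that intersections of such fixed sets are again fixed.

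First I would take two concepts $K_1 = (A_1, B_1)$ and $K_2 = (A_2, B_2)$ in $\cl$ and apply the meet formula, which yields
\[
  K_1 \wedge K_2 = \left( A_1 \cap A_2,\ (B_1 \cup B_2)'' \right).
\]
Since the proposition guarantees that $(\cl; \le)$ is a complete lattice, $K_1 \wedge K_2$ is itself a concept, so its extent $A_1 \cap A_2$ is by definition an extent. Hence the intersection of two extents is an extent, and because the proposition is stated for arbitrary families, the identical argument yields closure under arbitrary intersections. Dually, the join formula
\[
  K_1 \vee K_2 = \left( (A_1 \cup A_2)'',\ B_1 \cap B_2 \right)
\]
exhibits $B_1 \cap B_2$ as the intent of a concept, establishing that the set of intents is closed under intersection.

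The only point demanding care — and essentially the sole place a reader might stumble — is the bookkeeping of which coordinate is preserved under which operation: it is the \emph{meet} that produces the intersection of extents and the \emph{join} that produces the intersection of intents, a mild inversion that simply reflects the contravariance built into the Galois connection $(I^*, I_*)$.

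As a self-contained alternative that avoids the lattice machinery, I could argue purely from the closure operator $c = I_* \circ I^*$ of Proposition \ref{prop:gc_closure}. Given extents $A_1, A_2$ with $A_i'' = A_i$, extensivity gives $A_1 \cap A_2 \subseteq (A_1 \cap A_2)''$, while applying monotonicity of $c$ to the inclusions $A_1 \cap A_2 \subseteq A_i$ gives $(A_1 \cap A_2)'' \subseteq A_i'' = A_i$ for each $i$, hence $(A_1 \cap A_2)'' \subseteq A_1 \cap A_2$. The two inclusions force $(A_1 \cap A_2)'' = A_1 \cap A_2$, so $A_1 \cap A_2$ is an extent; the statement for intents then follows \emph{mutatis mutandis} using $k = I^* \circ I_*$.
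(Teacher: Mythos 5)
Your proposal is correct and takes the same route as the paper, which states this corollary without further argument precisely because it reads directly off the meet and join formulas of Proposition \ref{prop:context_complete_lattice}: the meet of a family of concepts exhibits the intersection of their extents as an extent, and the join exhibits the intersection of their intents as an intent. Your alternative closure-operator argument is also valid, but the lattice-formula reading is the intended one.
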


\begin{lemma} \cite[Theorem 3.8]{davey_priestley_2002}
  \label{lem:fund_1}
  Let $(G,M,I)$ be a context and $L = \setsys{B}(G,M,I)$ the associated complete lattice of concepts.
  Then the mappings $\gamma:G\to L$ where $\gamma(g) := ({g''},{g'})$ and $\mu : M \to L$ where $\mu(m) = ({m'},{m''})$ are such that the set $\gamma(G)$ is join-dense in $L$,
  the set $\mu(M)$ is meet-dense in $L$,
  and $gIm$ is equivalent to $\gamma(g) \leq \mu(m)$ for every $g \in G$ and $m \in M$.
\end{lemma}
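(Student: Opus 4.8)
The plan is to verify the three claims of Lemma~\ref{lem:fund_1} directly from the definitions of the polar maps and the lattice order, relying on the Galois-connection machinery (Gal1)--(Gal3) and Proposition~\ref{prop:gc_closure}. First I would confirm that $\gamma$ and $\mu$ are well-defined, i.e.\ that $\gamma(g)=(g'',g')$ and $\mu(m)=(m',m'')$ are genuine concepts; this is immediate from Proposition~\ref{prop:gc_closure}, since $g''$ and $m''$ are closed sets, so the pairs satisfy $A'=B$ and $B'=A$.

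Next I would establish the equivalence $gIm \iff \gamma(g)\le\mu(m)$, which I expect to be the technical heart of the argument. Unwinding the order $\le$ (Proposition~\ref{prop:context_complete_lattice} and the definition in~\eqref{conorder}), $\gamma(g)\le\mu(m)$ means $g''\subseteq m'$. Using the closure properties, $g''\subseteq m'$ is equivalent to $m\in g'$ (one direction via $g\subseteq g''\subseteq m'$ hence $m\in(m')'=\dots$ by antitonicity, the other by applying $\bullet'$ and (Gal3)), and $m\in g'$ is precisely the statement $gIm$. I would phrase this cleanly as a short chain of iff's: $gIm \iff m\in\{g\}' \iff \{g\}''\subseteq\{m\}' \iff \gamma(g)\le\mu(m)$, justifying the middle step by antitonicity of $\bullet'$ together with idempotence of the closure.

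For join-density of $\gamma(G)$, I would take an arbitrary concept $K=(A,B)$ and show $K=\bigvee_{g\in A}\gamma(g)$. Using the join formula from Proposition~\ref{prop:context_complete_lattice}, the extent of this join is $\left(\bigcup_{g\in A}g''\right)''$; since each $g''\subseteq A''=A$ and $\bigcup_{g\in A}\{g\}=A$, this reduces to $A''=A$, recovering $K$. The meet-density of $\mu(M)$ follows \emph{mutatis mutandis}, writing $K=\bigwedge_{m\in B}\mu(m)$ and computing the intent as $\left(\bigcup_{m\in B}m''\right)''=B''=B$.

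The main obstacle is purely bookkeeping with the primes: keeping straight which closure ($c=I_*\circ I^*$ on $\pwr(G)$ versus $k=I^*\circ I_*$ on $\pwr(M)$) is being applied, and correctly using the order-reversing nature of $\bullet'$ when translating $g''\subseteq m'$ into the incidence $gIm$. None of the steps is deep, but the direction-reversal in the Galois connection makes it easy to invert an inclusion by accident, so I would be careful to cite (Gal2) (antitonicity) and (Gal3) (the $A'=A'''$ identity) at each translation. Since the lemma is quoted from \cite[Theorem 3.8]{davey_priestley_2002}, the author may simply cite it, but the proof above is short enough to include in full.
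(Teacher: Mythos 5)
Your proof is correct, but note that the paper itself offers no proof of this lemma at all: it is quoted verbatim from \cite[Theorem 3.8]{davey_priestley_2002} (together with Lemma~\ref{lem:fund_2}, it constitutes the ``fundamental theorem of concept lattices''), and the authors simply cite it. What you have written is essentially the standard textbook argument that the citation points to: well-definedness of $\gamma$ and $\mu$ via (Gal3) (so that $(g'')'=g'''=g'$, etc.), the chain $gIm \iff m\in\{g\}' \iff \{g\}''\subseteq\{m\}' \iff \gamma(g)\le\mu(m)$, and density via the join/meet formulas of Proposition~\ref{prop:context_complete_lattice} together with the observation that $\bigcup_{g\in A}g''=A$ for a closed extent $A$ (each $g''\subseteq A''=A$ by monotonicity of the closure, and $A\subseteq\bigcup_{g\in A}g''$ since $g\in g''$). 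All of these steps check out; the only blemish is the garbled phrase ``hence $m\in(m')'=\dots$'' in your middle step, where the cleanest route is simply $g\in g''\subseteq m'$ giving $gIm$ directly for one direction, and $\{m\}\subseteq g'$ giving $g''\subseteq\{m\}'$ by order-reversal for the other. So your proposal is a valid self-contained replacement for the citation, and it buys the reader a proof the paper deliberately outsources; the paper's approach buys brevity and defers to the established literature.
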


\begin{lemma} \cite[Theorem 3.9]{davey_priestley_2002}
  \label{lem:fund_2}
  Let $L$ be a complete lattice and let $G$ and $M$ be sets for which there exist mappings $\gamma:G \rightarrow L$ and $\mu : M \rightarrow L$, such that $\gamma(G)$ is join-dense in $L$ and $\mu(M)$ is meet dense in $L$.
  Define $I$ by $gIm$ if and only if $\gamma(g) \leq \mu(m)$, for all $g \in G$ and $m \in M$.
  Then $L$ is isomorphic to $\setsys{B}(G,M,I)$.
  In particular, any complete lattice $L$ is isomorphic to the concept lattice $\setsys{B}(L,L,\leq)$.
\end{lemma}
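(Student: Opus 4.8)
The plan is to exhibit an explicit order isomorphism $\phi \colon L \to \setsys{B}(G,M,I)$ and then verify it is a bijection that both preserves and reflects order. For each $x \in L$ I would set
\[ A_x = \{ g \in G : \gamma(g) \leq x \}, \qquad B_x = \{ m \in M : x \leq \mu(m) \}, \]
and define $\phi(x) = (A_x,B_x)$. The first order of business is to record two density representations: since $\gamma(G)$ is join-dense, every $x$ is the join of some subset of $\gamma(G)$, and any such subset lies inside $\{\gamma(g) : \gamma(g) \leq x\}$, so $x = \bigvee_{g \in A_x}\gamma(g)$; dually, meet-density gives $x = \bigwedge_{m \in B_x}\mu(m)$. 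These two identities are the workhorses for everything that follows.

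The main obstacle, and the step where the density hypotheses are indispensable, is showing that $(A_x,B_x)$ is genuinely a concept, i.e.\ that $A_x' = B_x$ and $B_x' = A_x$ for the polar maps of $I$. The easy inclusions $B_x \subseteq A_x'$ and $A_x \subseteq B_x'$ are immediate from transitivity of $\leq$ together with the fact that $gIm$ holds precisely when $\gamma(g) \leq \mu(m)$. For the reverse inclusion $A_x' \subseteq B_x$, I take $m \in A_x'$, so $\gamma(g) \leq \mu(m)$ for every $g \in A_x$; then $\mu(m)$ is an upper bound of $\{\gamma(g) : g \in A_x\}$, and the join representation forces $x = \bigvee_{g \in A_x}\gamma(g) \leq \mu(m)$, giving $m \in B_x$. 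The inclusion $B_x' \subseteq A_x$ is dual, using the meet representation. This confirms that $\phi$ lands in $\setsys{B}(G,M,I)$.

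It then remains to check that $\phi$ is an order isomorphism. Order preservation is trivial, since $x \leq y$ forces $A_x \subseteq A_y$. Order reflection, and hence injectivity, follows again from the join representation: if $A_x \subseteq A_y$ then $x = \bigvee_{g \in A_x}\gamma(g) \leq \bigvee_{g \in A_y}\gamma(g) = y$. For surjectivity, given any concept $(A,B)$ I would set $x = \bigvee_{g \in A}\gamma(g)$ and verify $A_x = A$: the inclusion $A \subseteq A_x$ is clear, and for $A_x \subseteq A$ I use that each $g \in A = B'$ satisfies $\gamma(g) \leq \mu(m)$ for all $m \in B$, so $x \leq \mu(m)$ for all such $m$; then any $g \in A_x$ has $\gamma(g) \leq x \leq \mu(m)$ for every $m \in B$, i.e.\ $g \in B' = A$. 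Consequently $B_x = A_x' = A' = B$, so $\phi(x) = (A,B)$. Since an order isomorphism between complete lattices automatically preserves all joins and meets, $\phi$ is a lattice isomorphism and $L \cong \setsys{B}(G,M,I)$.

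Finally, the ``in particular'' clause drops out by specializing $G = M = L$ with $\gamma = \mu = \id_L$: then $\gamma(L) = L$ is vacuously join-dense and meet-dense, since each $x$ equals both $\bigvee\{x\}$ and $\bigwedge\{x\}$, and the induced relation is exactly $\ell_1\, I\, \ell_2$ iff $\ell_1 \leq \ell_2$. The general statement therefore yields $L \cong \setsys{B}(L,L,\leq)$.
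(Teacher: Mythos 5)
Your proof is correct: the well-definedness of $\phi(x)=(A_x,B_x)$ via the join/meet representations, the order-embedding argument, and the surjectivity check are all sound, and this is essentially the standard argument behind the cited result. The paper itself offers no proof of this lemma (it defers to Davey--Priestley, Theorem 3.9), and your construction reproduces that textbook proof, so there is nothing to reconcile.
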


When considered together, Lemmas \ref{lem:fund_1}--\ref{lem:fund_2} above are considered the ``fundamental theorem of concept lattices''.
They make explicit the relationship between contexts and concept lattices.

The reader is cautioned that Lemma \ref{lem:fund_2} \emph{does not establish} that this is an equivalence between categories $\cat{Ctx}$ and $\cat{CompLat}$.
Many contexts that are not isomorphic in $\cat{Ctx}$ may yield the same complete lattice.
Upon applying Lemma \ref{lem:fund_2} to render this complete lattice into a context will not recover all of these contexts!

For this reason, we will make a distinction between a concept lattice, as an object in $\cat{CompLat}$ for which any original context has been forgotten, versus a concept lattice itself which contains strictly more information.

\begin{definition}
\label{def:conlat}
A \textbf{concept lattice} consists of a tuple $(G,M,L,\gamma,\mu)$: a lattice $L \in \cat{CompLat}$, two sets $G$ and $M$, and two functions $\gamma$ and $\mu$ as defined in Lemma \ref{lem:fund_2}.  

A morphism between concept lattices $(G_1,M_1,L_1,\gamma_1,\mu_1) \to (G_2,M_2,L_2,\gamma_2,\mu_2)$ consists of a $\cat{Ctx}$ morphism between the contexts $(G_1,M_1,I_1) \to (G_2,M_2,I_2)$ defined by Lemma \ref{lem:fund_2}.
\end{definition}

Because the definition of $\cat{ConLat}$ effectively reuses the definition of morphisms from $\cat{Ctx}$, we have the following result.

\begin{corollary} 
\label{cor:context_concept}
  The operation of constructing a concept lattice $\setsys{B}(G,M,I)$ from its associated formal context $(G,M,I)$ is a covariant functor $\setsys{B} : \cat{Ctx} \to \cat{ConLat}$.
  Moreover, this is an isomorphism of categories.
\end{corollary}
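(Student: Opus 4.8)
The plan is to write down $\setsys{B}$ explicitly on objects and morphisms, check the two functor axioms, and then supply an inverse built from the fundamental theorem of concept lattices (Lemmas \ref{lem:fund_1} and \ref{lem:fund_2}). On objects I would set $\setsys{B}(G,M,I) = (G, M, \setsys{B}(G,M,I), \gamma, \mu)$, where the complete lattice of concepts is supplied by Proposition \ref{prop:context_complete_lattice} and the maps $\gamma(g) = (g'', g')$, $\mu(m) = (m', m'')$ are those of Lemma \ref{lem:fund_1}; that lemma guarantees $\gamma(G)$ is join-dense and $\mu(M)$ meet-dense, so the tuple is a legitimate object of $\cat{ConLat}$. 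On morphisms I would send a $\cat{Ctx}$ morphism $(f,g)$ to the very same pair $(f,g)$, now read as a $\cat{ConLat}$ morphism; this is well-typed precisely because Definition \ref{def:conlat} declares a morphism of concept lattices to be a $\cat{Ctx}$ morphism of the associated contexts.

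Functoriality is then immediate and is the easy part: $\setsys{B}$ fixes the identity pair $(\id_G, \id_M)$, and because composition in both categories is componentwise composition of the two underlying functions, $\setsys{B}(m_2 \circ m_1) = \setsys{B}(m_2) \circ \setsys{B}(m_1)$. For the isomorphism claim I would introduce the candidate inverse $\Psi : \cat{ConLat} \to \cat{Ctx}$ that sends $(G,M,L,\gamma,\mu)$ to the context $(G,M,I)$ whose relation is $gIm$ exactly when $\gamma(g) \le \mu(m)$ --- the relation produced in Lemma \ref{lem:fund_2} --- and that again fixes morphisms. One composite is clean: starting from $(G,M,I)$, applying $\setsys{B}$ and then $\Psi$ returns the relation $I'$ with $gI'm$ exactly when $\gamma(g) \le \mu(m)$, and Lemma \ref{lem:fund_1} asserts precisely that this recovers the original incidence, so $\Psi \circ \setsys{B} = \id_{\cat{Ctx}}$. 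Since both functors act as the identity on morphism data and the Hom-sets of $\cat{ConLat}$ are by fiat the Hom-sets of $\cat{Ctx}$, the pair is automatically a bijection on every Hom-set, i.e.\ $\setsys{B}$ is fully faithful.

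The hard part will be the other composite $\setsys{B} \circ \Psi$ at the level of objects. Here Lemma \ref{lem:fund_2} delivers only that $L$ is \emph{isomorphic} to the recomputed concept lattice $\setsys{B}(G,M,I)$, not literally equal to it, so a naive reading of Definition \ref{def:conlat}, which admits an arbitrary abstract complete lattice $L$, makes $\setsys{B}$ merely essentially surjective on objects rather than bijective. I would resolve this by noting that a $\cat{ConLat}$ object interacts with the rest of the category only through its derived context --- every morphism into or out of it is a $\cat{Ctx}$ morphism of that context --- so the only categorically visible invariant of the tuple is the relation $I$ it induces. Taking the objects of $\cat{ConLat}$ to be the canonical tuples in the image of $\setsys{B}$ (equivalently, identifying tuples that induce the same $I$) makes the object assignment a strict bijection and promotes the essential surjectivity to the asserted isomorphism of categories; I would flag explicitly that, read literally with arbitrary $L$, one obtains an equivalence rather than a strict isomorphism.
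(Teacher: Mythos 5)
Your proposal is correct, but it takes a genuinely different route from the paper's. The paper's proof spends essentially all of its effort at the morphism level: given a $\cat{Ctx}$ morphism $(m_G,m_M)$, it uses Lemma \ref{lem:fund_1} together with incidence preservation (cf.\ Proposition \ref{prop:ctx_sub_rel}) to construct an explicit induced map $m_L((A,A')) := (m_G(A)'',m_G(A)')$ on concepts and verifies that it lands on concepts; the isomorphism-of-categories claim is then dispatched by the remark preceding the corollary, namely that $\cat{ConLat}$ morphisms are by definition $\cat{Ctx}$ morphisms. You do the opposite: you take the morphism-level correspondence as definitional (which Definition \ref{def:conlat} indeed licenses), and invest your effort at the object level, building the explicit inverse $\Psi$ and checking $\Psi \circ \setsys{B} = \id$ on $\cat{Ctx}$ via the equivalence $gIm \iff \gamma(g) \leq \mu(m)$ of Lemma \ref{lem:fund_1}. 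What your route buys is precision about the isomorphism claim: you correctly observe that, read literally, Definition \ref{def:conlat} admits an arbitrary complete lattice $L$ with $\gamma(G)$ join-dense and $\mu(M)$ meet-dense, so that Lemma \ref{lem:fund_2} only makes $\setsys{B}$ essentially surjective on objects, yielding an equivalence rather than a strict isomorphism unless objects of $\cat{ConLat}$ are identified with the canonical tuples (or tuples inducing the same incidence relation are identified); the paper glosses over exactly this point, and your explicit flag and proposed repair are an improvement. What the paper's route buys is reuse: the induced lattice map $m_L$ constructed in its proof is precisely the input to Proposition \ref{prop:concept_lattice_morphism}, where it is shown to be a lattice homomorphism, so the paper's argument does double duty; your proof, while cleaner and more complete for the corollary itself, would leave that construction still to be carried out before the forgetful functor $\cat{ConLat} \to \cat{CompLat}$ can be established.
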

\begin{proof}
    We need to show that morphisms in $\cat{Ctx}$ induce lattice homomorphisms on their associated concept lattices.

      Suppose that $m: (G_1,M_1,I_1) \to (G_2,M_2,I_2)$ is a morphism in $\cat{Ctx}$ given by $m_G: G_1 \to G_2$ and $m_M: M_1 \to M_2$.
  Let us define $m_L : \setsys{B}(G_1,M_1,I_1) \to \setsys{B}(G_2,M_2,I_2)$ as follows.
  Suppose that $K$ is a concept of $\setsys{B}(G_1,M_1,I_1)$.
  Without loss of generality, Lemma \ref{lem:fund_1} implies that we can write this as $K=(A,A')$.
  This means that for each $a \in A$ and each $b \in A'$, we have that $I_1[a,b]=1$.
  Since $m$ is a morphism in $\cat{Ctx}$, this means that $I_2[m_G(a),m_M(b)]=1$ as well.
  Therefore, $(m_G(A),m_M(A'))$ is at least a subconcept of $\setsys{B}(G_2,M_2,I_2)$.
  According to Lemma \ref{lem:fund_1}, we therefore may define
  \begin{equation*}
    \begin{aligned}
      m_L((A,A')) &:= (\gamma_2 \circ m_G)(A) = (m_G(A)'',m_G(A)') \\&= (\mu_2 \circ m_M)(A') = (m_M(A')',m_M(A')''),
    \end{aligned}
  \end{equation*}
  which is guaranteed to be a concept of $\setsys{B}(G_2,M_2,I_2)$.
\end{proof}


\begin{proposition}
\label{prop:concept_lattice_morphism}
    The action of forgetting the Galois data from a concept lattice $(G,M,L,\gamma,\mu)$ in $\cat{ConLat}$ to produce its underlying complete lattice $L$ in $\cat{CompLat}$ is a functor $\cat{ConLat} \to \cat{CompLat}$.
\end{proposition}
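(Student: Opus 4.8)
The plan is to construct the claimed functor $U \colon \cat{ConLat} \to \cat{CompLat}$ explicitly — it is the ``forgetful'' functor that discards the data $(G,M,\gamma,\mu)$ and retains only the underlying lattice together with the lattice homomorphism induced on it — and then to verify the two functor axioms. On objects I set $U(G,M,L,\gamma,\mu) := L$, which is by definition an object of $\cat{CompLat}$, so there is nothing to check. All the content lies in the morphism assignment and in confirming it respects identities and composition.

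For morphisms, recall from Definition \ref{def:conlat} that a $\cat{ConLat}$ morphism $(G_1,M_1,L_1,\gamma_1,\mu_1) \to (G_2,M_2,L_2,\gamma_2,\mu_2)$ is precisely a $\cat{Ctx}$ morphism $(f,g) \colon (G_1,M_1,I_1) \to (G_2,M_2,I_2)$, where each $I_i$ is the incidence relation recovered from $\gamma_i,\mu_i$ via Lemma \ref{lem:fund_2}. Applying the functor $\setsys{B} \colon \cat{Ctx} \to \cat{ConLat}$ of Corollary \ref{cor:context_concept} produces a lattice homomorphism $m_L := \setsys{B}(f,g) \colon \setsys{B}(G_1,M_1,I_1) \to \setsys{B}(G_2,M_2,I_2)$. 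By Lemma \ref{lem:fund_2} each $L_i$ carries a canonical lattice isomorphism $\theta_i \colon L_i \to \setsys{B}(G_i,M_i,I_i)$, and I define $U(f,g) := \theta_2^{-1} \circ m_L \circ \theta_1 \colon L_1 \to L_2$. As a composite of lattice homomorphisms (isomorphisms being homomorphisms), $U(f,g)$ is itself a lattice homomorphism, hence a morphism of $\cat{CompLat}$.

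Functoriality then follows formally. Since $\setsys{B}$ is a functor it sends identities to identities, so $U(\id) = \theta^{-1} \circ \id \circ \theta = \id_L$. For composition, writing $m_L^{(1)}, m_L^{(2)}$ for the homomorphisms induced by two composable $\cat{ConLat}$ morphisms, the inner isomorphisms telescope via $\theta_2 \circ \theta_2^{-1} = \id$, and the fact that $\setsys{B}$ preserves composition gives $U(\text{second}) \circ U(\text{first}) = \theta_3^{-1} \circ m_L^{(2)} \circ m_L^{(1)} \circ \theta_1 = U(\text{second} \circ \text{first})$.

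I expect the only real subtlety — and hence the main obstacle — to be the gap between the abstract lattice $L$ recorded in a $\cat{ConLat}$ object and the concrete lattice of concept pairs $\setsys{B}(G,M,I)$ on which Corollary \ref{cor:context_concept} delivers its induced homomorphism. The two are only \emph{canonically isomorphic}, via Lemma \ref{lem:fund_2}, so $m_L$ must be transported across $\theta_1$ and $\theta_2$ before it can be read as a morphism $L_1 \to L_2$; the one point genuinely requiring care is that this transport is compatible with composition, which is exactly the telescoping computation above. Beyond this bookkeeping there is no creative step, as the entire statement is inherited from the functoriality of $\setsys{B}$ already established in Corollary \ref{cor:context_concept}.
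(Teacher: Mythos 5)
Your proposal has a genuine gap: it assumes precisely the fact that this proposition exists to prove. You write that ``applying the functor $\setsys{B} \colon \cat{Ctx} \to \cat{ConLat}$ of Corollary \ref{cor:context_concept} produces a lattice homomorphism $m_L := \setsys{B}(f,g)$,'' but nothing in that corollary establishes the homomorphism property. By Definition \ref{def:conlat}, a morphism of $\cat{ConLat}$ is \emph{by definition} just a $\cat{Ctx}$ morphism --- a pair of set-level functions $(f,g)$ on objects and attributes satisfying the polar-map conditions --- not a map of lattices at all. The proof of Corollary \ref{cor:context_concept} only defines the induced map $m_L$ on concepts and checks it is well defined (sends concepts to concepts); it never verifies that $m_L$ preserves joins and meets. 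So when the forgetful functor must output a morphism of $\cat{CompLat}$, i.e.\ an actual lattice homomorphism, there is substantive work to do, and your proof skips it by citing a result that does not contain it.

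The paper's proof consists exactly of that missing verification: for a family of concepts $(A_i,A_i')$ one computes
\begin{equation*}
m_L\Bigl(\bigvee_i (A_i,A_i')\Bigr) = \bigvee_i m_L\bigl((A_i,A_i')\bigr),
\end{equation*}
using the explicit join formula of Proposition \ref{prop:context_complete_lattice} (the join's extent is $\bigl(\bigcup_i A_i\bigr)''$) together with Lemma \ref{lem:ctx_morphisms} (a $\cat{Ctx}$ morphism satisfies $f(A'') = f(A)''$, i.e.\ commutes with the closure operator), and then dualizes for meets. Your transport-of-structure bookkeeping --- conjugating by the canonical isomorphisms $\theta_i$ of Lemma \ref{lem:fund_2} and telescoping to get functoriality --- is fine as far as it goes, and would be a legitimate complement to the paper's argument (which is silent on identities and composition). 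But without the join/meet-preservation computation, the core of the proposition is unproven; you would need to either carry out that calculation or point to a statement that actually contains it, and Corollary \ref{cor:context_concept} is not such a statement.
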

\begin{proof}
We must establish that $m_L$, defined in Corollary \ref{cor:context_concept} is a lattice homomorphism.
  Suppose that we have a collection of concepts $(A_i,A_i')$ of $\setsys{B}(G_1,M_1,I_1)$.
  We have that

  \begin{equation*}
    \begin{aligned}
      m_L\left(\bigvee_i (A_i, A_i') \right)
      &= m_L\left(\left(\bigcup_i A_i \right)'',\bigcap_i A_i'\right) \\ 
      = (\gamma_2 \circ m_G)\left(   \left(\bigcup_i A_i \right)''     \right) 
      &= \gamma_2 \left( m_G\left(   \left(\bigcup_i A_i \right)''     \right)\right)\\ 
      = \gamma_2 \left( m_G\left(   \left(\bigcup_i A_i \right)     \right)'' \right) 
      &= \gamma_2 \left(\bigcup_i m_G\left(   A_i      \right)'' \right)\\ 
      = \gamma_2 \left(\bigvee_i m_G\left(   A_i      \right) \right) 
      &= \bigvee_i \gamma_2 \left( m_G\left(   A_i      \right)\right)\\ 
      &= \bigvee_i m_L \left(   A_i      \right) 
    \end{aligned}
  \end{equation*}
  Notice the use of Lemma \ref{lem:ctx_morphisms} on the fourth line and Proposition \ref{prop:context_complete_lattice} on the sixth line.
  The argument for $\bigwedge_i (A_i,A_i')$ follows by the same reasoning, after changing $m_G$ into $m_M$ as appropriate.
\end{proof}

\rem{

\caj{I'm confused, how is the above related to the below?} \rawson{I think the definition could be moved, but we do use it later.}

\begin{definition}
  For a context $C=(G,M,I)$ define $C^T$ to be the context $(M,G,I^T)$ where $mI^Tg$ if and only if $gIm$.
\end{definition}

Let $\setsys{B}_G := \{A\subseteq G : A'' = A\}$ and $\setsys{B}^\delta_M := \{B\subseteq M : B'' = B\}$ where $\setsys{B}^{\delta}_{M}$ is the reverse inclusion order on $\setsys{B}_M$.
This forms an order isomorphism in the commutative diagram drawn in Figure \ref{fig:concept_lattice_duality}. \rawson{Proof?}
It is worth explicitly noting that the lattices formed over each of the three objects in this diagram are also isomorphic to each other. \rawson{Proof?}
In Figure \ref{fig:concept_lattice_duality} $\pi_1$ is the projection map $\pi_1 (A,B) \to A$ and $\pi_2$ is the projection map $(A,B) \to B$. \rawson{Proof?}
The functions $t$ and $t^{-1}$ establish correspondences between a concept $(A,B)$ and its image $(B,A)$ in the dual,
and are necessary to make the diagram commute. \rawson{This sentence doesn't make sense. The $t$ should be the $'$ function.}

\begin{figure}
  \begin{center}
    \begin{tikzcd}
      & \setsys{B}(G,M,I) \arrow[dl, "\pi_1"] \arrow[dr ,"\pi_2"] & \\
      \setsys{B}_G \arrow[shift right]{rr}[below]{t}
      & & \setsys{B}^{\delta}_{M} \arrow[shift right]{ll}[above]{t^{-1}}
    \end{tikzcd}
    \caption{The concept lattice and its dual are order isomorphic.}
    \label{fig:concept_lattice_duality}
  \end{center}
\end{figure}

}

\subsection{The Intersection Complexes of Hypergraphs}

\begin{definition}
  A set system $E \subseteq \mathcal{P}(V)$ is an \textbf{intersection complex} if $E$ is closed under multi-way intersection.
  Namely, for all $F \sub E$ then $\cap_{f \in F} f \in E$. It is said to be \textbf{topped} if $V \in E$.
  

\end{definition}

An intersection complex is also called an intersection structure, intersection closure, or $\cap$-structure. Given a hypergraph $H=(V,E)$, we can form its intersection complex $H^\cap = (V,E^\cap)$, by including all multi-way intersections: 
\begin{equation*}
  E^\cap = \left\{ \bigcap_{f\in F} f \right\}_{F \sub E, F \neq \emptyset}.
\end{equation*}

\rem{

Figure \ref{fig:original_hyper} shows the original hypergraph on the left, the collapsed hypergraph in the middle, and its intersection complex on the right. 

\begin{figure}
  \begin{center}
    \caption{Various hypergraphs obtained from the context in Example \ref{eg:running}. \rawson{Need to include the edge with no vertices in (c).}}
    \begin{tabular}{ccc}
      \subfloat[$H$]{\includegraphics[width = 2in]{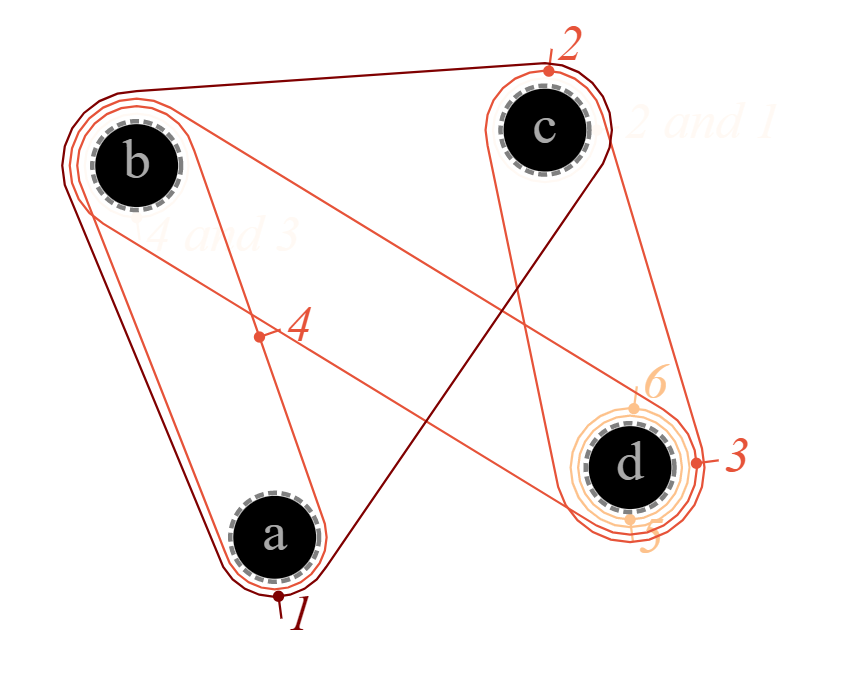}} &
      \subfloat[$H_c$]{\includegraphics[width = 2in]{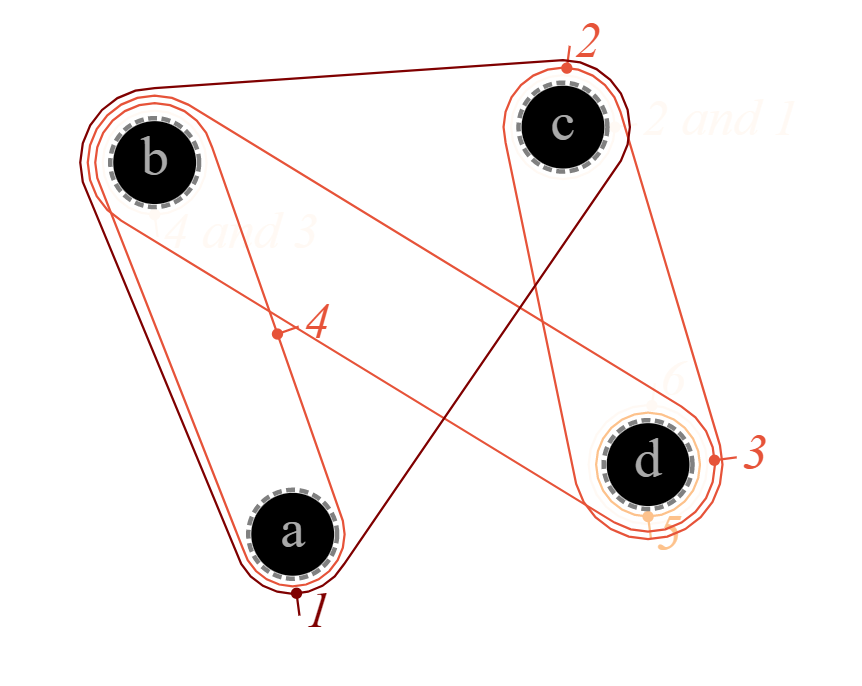}} &
      \subfloat[$H_c^\cap$]{\includegraphics[width = 2in]{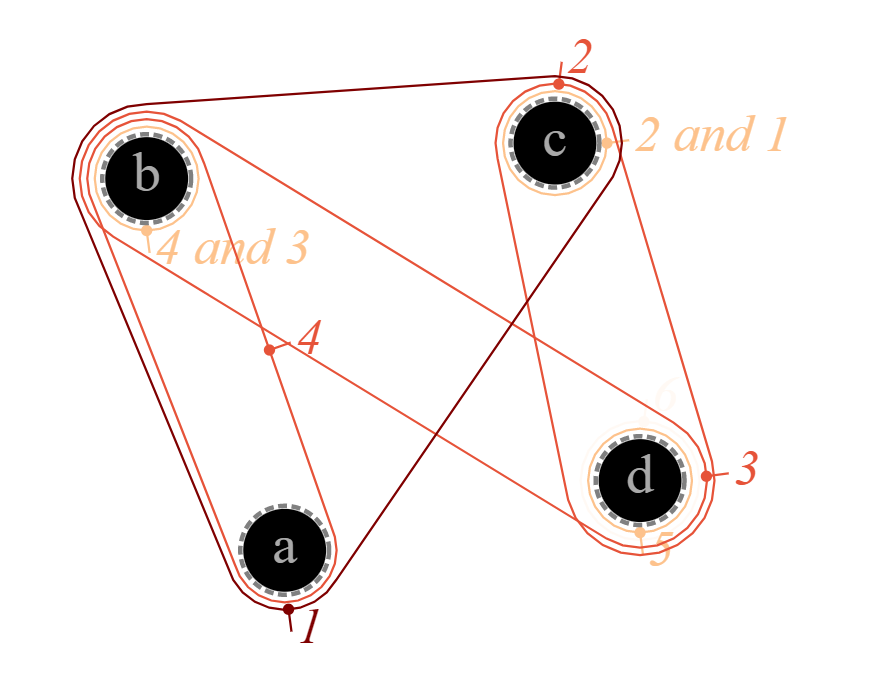}} 
    \end{tabular}
    \label{fig:original_hyper}
  \end{center}
\end{figure}

}

\fig{fig:int} shows the intersection complex $H^\cap$ for our example, with the Euler diagram on the left and the edge order on the right. It is created by adding the hyperedges (using compact set notation) $6=1 \cap 5 = a, 2 \cap 4 = c$, and $2 \cap 5 = ac$ to the original hypergraph $H$ as shown in Figures \ref{fig:hyper_ex} and \ref{figures/hg_hasse}. 
Compare this intersection complex $H^\cap$ to the ASC $H^\sub$ from \fig{asc}, which we can see is obtained simply by adding the further hyperedges $d,cd$, and $ab$. In fact, this relationship holds generally.

\begin{figure}[h]
  \begin{center}
    \begin{tabular}{cc}
      \subfloat{\includegraphics[width = 3.5in]{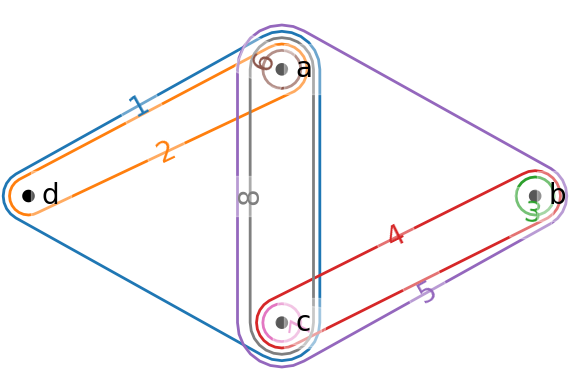}} & \qquad
      \subfloat{\includegraphics[width = 2in]{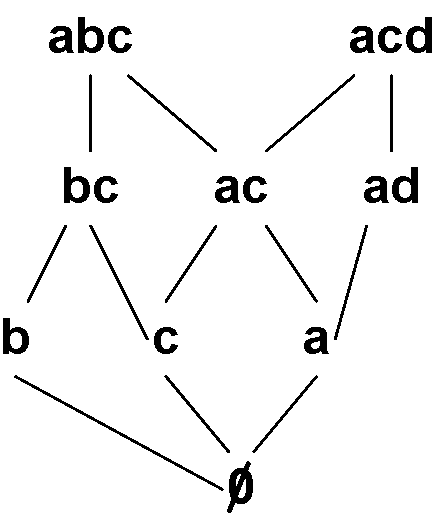}} 
      
    \end{tabular}
    \caption{(Left) Euler diagram of the intersection complex of our example. (Right) Its edge order $E^\cap$. }
    \label{fig:int}
  \end{center}
\end{figure}

\begin{proposition}
    
For  a hypergraph 
$H=(V,E)$, its edge order is included in its intersection complex, which is in turn included in its Dowker complex:
    \[ E \sub E^\cap \sub E^\sub. \]

\end{proposition}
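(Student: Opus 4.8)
The plan is to verify the two inclusions separately, each of which reduces to a single observation drawn directly from the definition of the respective closure, so the whole statement follows almost immediately once the definitions are unpacked.

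First I would establish $E \subseteq E^\cap$ by exhibiting each hyperedge as a one-fold intersection. Given $e \in E$, take the nonempty family $F = \{e\} \subseteq E$; then $\bigcap_{f \in F} f = e$, so $e \in E^\cap$ directly from the definition $E^\cap = \left\{ \bigcap_{f \in F} f \right\}_{F \subseteq E,\, F \neq \emptyset}$ of the intersection complex. Hence every edge already appears in $E^\cap$.

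Next I would establish $E^\cap \subseteq E^\sub$. Take an arbitrary $x \in E^\cap$, so that $x = \bigcap_{f \in F} f$ for some nonempty $F \subseteq E$. Because $F$ is nonempty, I can choose some $e_0 \in F$, and then the elementary fact that an intersection is contained in each of its members gives $x = \bigcap_{f \in F} f \subseteq e_0$. Since $e_0 \in E$ and $x \subseteq e_0$, the definition of the Dowker (subset) complex $E^\sub = \{ f \subseteq e \}_{e \in E}$ yields $x \in E^\sub$. Chaining the two inclusions then gives $E \subseteq E^\cap \subseteq E^\sub$.

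The argument is essentially definitional, so there is no substantial obstacle; the only point requiring genuine care is the nonemptiness of the index family $F$ in the definition of $E^\cap$, which is precisely what guarantees both that singletons are admissible (securing the first inclusion) and that a witnessing member $e_0$ exists (securing the second). I would flag this explicitly, since under the alternative convention that the empty intersection equals $V$, the element $V$ need not be a subset of any single edge, and the second inclusion could fail; the restriction $F \neq \emptyset$ is exactly what keeps the chain of inclusions valid.
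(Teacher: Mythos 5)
Your proof is correct: both inclusions follow exactly as you argue, taking the singleton family $F=\{e\}$ for $E \subseteq E^\cap$ and a witnessing member $e_0 \in F$ for $E^\cap \subseteq E^\sub$. The paper states this proposition without any proof at all, treating it as immediate from the definitions, so your argument---including the attention to the nonemptiness requirement $F \neq \emptyset$ in the definition of $E^\cap$, which is indeed what rules out the empty-intersection-equals-$V$ convention that would break the second inclusion---is precisely the routine verification the authors left implicit.
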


This is illustrated in \fig{figures/objects3}. There is also a natural order theoretical representation of an intersection complex.

\begin{proposition} \cite[Corollary 2.32]{davey_priestley_2002} 
  \label{prop:intersection_lattice}
  Let $E$ be a topped intersection complex. Then its edge order $(E,\sub)$ is a complete lattice in which for every non-empty subset $F \sub E$,
  \begin{equation*}
    \bigwedge_{f \in F} f = \bigcap_{f \in F} f,	\qquad
    \bigvee_{f \in F} f = \bigcap\left\{ e \in E : \left( \bigcup_{f \in F} f \right) \sub e \right\}.
  \end{equation*}
\end{proposition}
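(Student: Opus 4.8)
The plan is to invoke the standard order-theoretic principle that a poset in which every nonempty subset has a meet and which possesses a top element is automatically a complete lattice, with joins recovered as meets of upper-bound sets. First I would verify that meets in $(E,\subseteq)$ are computed by set intersection. Fix a nonempty $F \subseteq E$ and set $g = \bigcap_{f \in F} f$; because $E$ is an intersection complex, $g \in E$, so $g$ is a genuine element of the poset. Since $g \subseteq f$ for each $f \in F$, $g$ is a lower bound of $F$; and if $h \in E$ satisfies $h \subseteq f$ for all $f \in F$, then $h \subseteq \bigcap_{f \in F} f = g$, so $g$ is the greatest lower bound. Hence $\bigwedge_{f\in F} f = \bigcap_{f \in F} f$, establishing the meet formula.

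Next I would handle the top element: toppedness gives $V \in E$, and $e \subseteq V$ for every $e \in E$, so $V$ is the greatest element of $(E,\subseteq)$. Combined with the previous paragraph this already forces $(E,\subseteq)$ to be a complete lattice, since the meet of the empty family is $V$ while all nonempty meets exist.

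It then remains to derive the join formula. In any complete lattice the join of a family equals the meet of its set of upper bounds, so I would compute that set explicitly. The upper bounds of $F$ in $(E,\subseteq)$ are precisely the $e \in E$ with $f \subseteq e$ for all $f \in F$, equivalently those with $\bigcup_{f \in F} f \subseteq e$; call this set $U$. Toppedness makes $U$ nonempty (it contains $V$), so $\bigcap U \in E$ by intersection closure. Each $f \in F$ lies inside every member of $U$, hence $\bigcup_{f \in F} f \subseteq \bigcap U$, which shows $\bigcap U \in U$; thus $\bigcap U$ is the least element of $U$ and therefore the least upper bound of $F$. This yields exactly $\bigvee_{f \in F} f = \bigcap\{e \in E : (\bigcup_{f \in F} f) \subseteq e\}$.

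The one delicate point, which I would flag as the main obstacle, is the final verification that $\bigcap U$ is itself an upper bound of $F$ rather than merely a lower bound of the upper bounds; this is where both hypotheses are essential, with toppedness guaranteeing $U \neq \emptyset$ (so the displayed intersection is not a vacuous intersection over the empty family) and intersection closure guaranteeing that $\bigcap U$ lands back in $E$. Everything else is routine bookkeeping with the subset order.
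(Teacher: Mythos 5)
Your proof is correct: the paper does not prove this statement itself but cites it as Corollary 2.32 of Davey--Priestley, and your argument is precisely the standard one used there --- verify that nonempty meets are intersections (using intersection closure), note the top element makes the poset complete, and realize joins as meets of upper-bound sets, with the key check that $\bigcap U$ is itself an upper bound. Your flagging of where toppedness and intersection closure are each needed is exactly the right emphasis, and the treatment of the empty family (empty meet $= V$) correctly fills in the completeness claim.
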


We also get the following result which connects closure operators and topped intersection complexes.

\begin{proposition} \cite[Theorem 7.3]{davey_priestley_2002}
  \label{prop:intersection_closure_operator}
  Let $C$ be a closure operator on a set $X$.
  Then the family
  \begin{equation*}
    \setsys{L}_C := \{A \subseteq X : C(A) = A \}
  \end{equation*}
  of closed subsets of $X$ is a topped $\bigcap$-structure and so forms a complete lattice, when ordered by inclusion, in which
  \begin{equation*}
    \bigwedge_{i\in I} A_i = \bigcap_{i\in I} A_i
 \text{ and }
    \bigvee_{i\in I} A_i = C\left(\bigcup_{i\in I}A_i\right).
  \end{equation*}

  Conversely, given a topped $\bigcap$-structure $\setsys{L}$ on $X$, the formula
  \begin{equation*}
    C_\setsys{L}(A) := \bigcap\{B\in\setsys{L} : A\subseteq B \}
  \end{equation*}
  defines a closure operator $C_\setsys{L}$ on $X$.
\end{proposition}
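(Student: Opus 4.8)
The plan is to prove the two directions separately. For the forward direction I start from a closure operator $C$ on $X$ and verify that $\setsys{L}_C$ is a topped $\bigcap$-structure, then invoke Proposition \ref{prop:intersection_lattice} to obtain the complete-lattice structure together with the explicit meet and join formulas. For the converse I start from a topped $\bigcap$-structure $\setsys{L}$ and check the three defining axioms of a closure operator for $C_\setsys{L}$ directly.

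For the forward direction, I would first show $\setsys{L}_C$ is topped: extensivity (axiom 1) gives $X \sub C(X)$, while $C(X) \sub X$ holds trivially, so $C(X) = X$ and hence $X \in \setsys{L}_C$. Next, to see $\setsys{L}_C$ is a $\bigcap$-structure, take any nonempty family $\{A_i\}_{i\in I} \sub \setsys{L}_C$ and set $B = \bigcap_{i} A_i$. Extensivity gives $B \sub C(B)$; for the reverse inclusion, monotonicity (axiom 2) applied to $B \sub A_i$ yields $C(B) \sub C(A_i) = A_i$ for each $i$, whence $C(B) \sub \bigcap_i A_i = B$, so $B$ is closed. Proposition \ref{prop:intersection_lattice} then makes $(\setsys{L}_C,\sub)$ a complete lattice whose meet is intersection, giving the first formula. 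For the join, I would verify that $C(\bigcup_i A_i)$ is the least closed set above every $A_i$: it is closed by idempotency (axiom 3); it contains each $A_i$ since $A_i \sub \bigcup_i A_i \sub C(\bigcup_i A_i)$; and if $D$ is any closed upper bound, then $\bigcup_i A_i \sub D$ forces $C(\bigcup_i A_i) \sub C(D) = D$ by monotonicity.

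For the converse, given a topped $\bigcap$-structure $\setsys{L}$, I would verify the three closure axioms for $C_\setsys{L}$. The defining intersection ranges over a nonempty collection because toppedness supplies $X \in \setsys{L}$ with $A \sub X$. Extensivity is immediate, since every member of $\{B \in \setsys{L} : A \sub B\}$ contains $A$. Monotonicity follows because $A_1 \sub A_2$ shrinks the indexing family, and intersecting over a smaller family produces a larger set. For idempotency I would first note that $C_\setsys{L}(A) \in \setsys{L}$, as it is an intersection of members of a $\bigcap$-structure, and then observe that any $D \in \setsys{L}$ satisfies $C_\setsys{L}(D) = D$: indeed $D$ appears in its own indexing family, giving $C_\setsys{L}(D) \sub D$, while extensivity supplies the reverse. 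Applying this with $D = C_\setsys{L}(A)$ yields $C_\setsys{L}(C_\setsys{L}(A)) = C_\setsys{L}(A)$.

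I expect no single step to be a genuine obstacle, since the result is essentially bookkeeping in the closure axioms; the one place that demands care is the join formula, which is the only spot where idempotency (axiom 3) is genuinely used, and dually the idempotency check in the converse, where one must first establish that being fixed by $C_\setsys{L}$ coincides with membership in $\setsys{L}$.
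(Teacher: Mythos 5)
Your proof is correct. The paper itself does not prove this proposition—it is quoted directly from the cited reference (Davey--Priestley, Theorem 7.3)—and your argument is exactly the standard one: verifying toppedness and closure under nonempty intersections, leaning on Proposition \ref{prop:intersection_lattice} for completeness of the lattice with meet given by intersection, checking that $C\left(\bigcup_{i} A_i\right)$ is the least closed upper bound, and in the converse direction establishing the three closure axioms via the key observation that the sets fixed by $C_{\setsys{L}}$ are precisely the members of $\setsys{L}$.
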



\subsection{Intersection Complexes and Concept Lattices} \label{sec:complexes}

This section explores the relationship between the concept lattice and the lattice formed with the subset order on the intersection closure of the hypergraph formed from a context.
We show that the edge order of the intersection complex of hypergraph induced by a formal context is lattice isomorphic, after a trivial extension, to the concept lattice induced by the formal context.

\begin{definition}
    Let $(G,M,I)$ be a context.
    The hypergraph obtained from the context $HypRep(G,M,I)$ will be denoted $H_c(G,M,I) = (G,E_c)$. Here $H_c$ the same $H_c$ that one obtains from $collapse(G,M,I)$ when treating $(G,M,I)$ as an element of $\cat{MHyp}$.
    The topped hypergraph, $H^t_c(G,M,I)=(G,E_c^t)$, is the hypergraph with possibly an additional hyperedge $E_c^t = G \cup E_c$.
\end{definition}

By definition the edge set of the intersection closure of a hypergraph forms an intersection complex.
If the hyperedge that is the whole set is present then it is a topped intersection complex. 

There is an order embedding from the simple hypergraph obtained from a context into the set of extents. The lemma below establishes that each hyperedge in $HypRep(G,M,I)$ is in fact the extent of a concept.

\begin{lemma}
  \label{lem:double_subset}
  Suppose that $A \subset G$ for some context $(G,M,I)$.
  If $A$ is $B'$ for some $B$ then $A = A''$.
\end{lemma}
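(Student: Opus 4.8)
The plan is to show that $A''=A$ under the hypothesis $A=B'$ for some $B \subseteq M$, by exploiting the closure-operator structure of the composite $c = I_* \circ I^*$ established in Proposition~\ref{prop:gc_closure}. The statement is exactly the well-known fact that the closed sets of a Galois connection (here, the extents) are precisely the images of the opposite polar map.

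First I would recall that by Definition~\ref{def:polar} the primes are the polar maps $I^*(A)=A'$ and $I_*(B)=B'$ forming a Galois connection between $\pwr(G)$ and $\pwr(M)^{op}$. The hypothesis $A=B'=I_*(B)$ says that $A$ lies in the image of $I_*$. The conclusion $A''=A$, i.e.\ $I_*(I^*(A))=A$, says precisely that $A$ is a closed set for the closure operator $c=I_*\circ I^*$ of Proposition~\ref{prop:gc_closure}(1).

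The key step is a direct application of property (Gal3) from Lemma~\cite[Lemma 7.26]{davey_priestley_2002}, which states $f_*(q)=f_*(f^*(f_*(q)))$ for all $q$ in the codomain. Taking $f_*=I_*$, $f^*=I^*$, and $q=B$, this reads
\begin{equation*}
  I_*(B) = I_*\bigl(I^*(I_*(B))\bigr),
\end{equation*}
that is, $B'=B'''$. Substituting $A=B'$ into both sides immediately gives $A=A''$, which is the desired conclusion. Equivalently, one can argue by the two inclusions: (Gal1) gives $A\subseteq A''$ always, while applying $I_*$ to the general inequality $B\subseteq B''$ (from Gal1 applied on the $M$ side) together with the order-reversal of $I_*$ yields $A''=B'''\subseteq B'=A$; combining the two inclusions gives equality.

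I do not anticipate a genuine obstacle here, since the result is a formal consequence of the Galois-connection axioms already quoted; the only care needed is bookkeeping about which side of the connection each prime acts on, given that $I^*$ and $I_*$ are being iterated and the order on $\pwr(M)$ is reversed. I would therefore structure the proof as a single clean invocation of (Gal3), with the two-inclusion argument mentioned only as an alternative, to keep the presentation short and to make transparent that extents are exactly the $c$-closed subsets of $G$.
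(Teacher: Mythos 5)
Your proposal is correct and rests on the same underlying facts as the paper: the paper's own two-line proof is precisely your ``alternative'' two-inclusion argument ($A \subseteq A''$ always, and $B \subseteq A'$ giving $A'' \subseteq B' = A$ by order reversal), while your primary route merely packages this as a single invocation of (Gal3) from the quoted Lemma of \cite{davey_priestley_2002}. Either formulation is fine; no gap.
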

\begin{proof}
  If $A = B'$ for some $B$ then $B \subseteq A'$ then $A'' \subseteq B'$ since $B\subseteq A'$ so $A'' \subseteq A$ so $A = A''$.
\end{proof}

\rem{

\begin{figure}
  \begin{center}
    \caption{Intersection closure of the hypergraph derived from Example \ref{eg:running}.}
    \begin{tabular}{cc}
      \subfloat[Hasse diagram of the edge set of $H_c^{t\cap}$ with the inclusion order]{\includegraphics[height = 2.5in]{figures/hasa_diagram_of_topped_primal_intersection complex.PNG}} &
      \subfloat[Concept lattice of $(G,M,I)$]{\includegraphics[width = 3in]{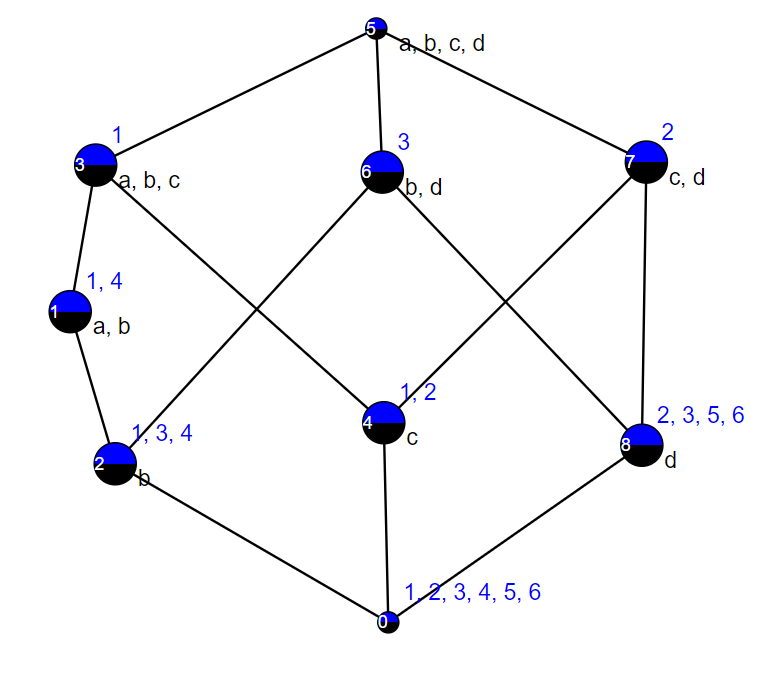}} 
    \end{tabular}
    \label{fig:hct_running}
  \end{center}
\end{figure}

\begin{example}
  Figure \ref{fig:hct_running} shows for the context in Example \ref{eg:running}, the poset associated to the intersection closure of the primal and the dual along with the concept lattice.
  It is arranged in such a way that the isomorphism between the two frames should be apparent.
\end{example}

}

\begin{lemma}
  We can interpret Proposition \ref{prop:intersection_lattice} as defining a functor $IntClose: \cat{Hyp} \to \cat{CompLat}$.
\end{lemma}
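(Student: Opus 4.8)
The plan is to unpack the two pieces of data making up a functor — its action on objects and on morphisms — and to verify the functor axioms, letting Proposition \ref{prop:intersection_lattice} and Proposition \ref{prop:intersection_closure_operator} carry the weight. On objects, given a hypergraph $H=(V,E)$ I would send it to the edge order of its \emph{topped} intersection complex, i.e.\ to $(E^{t\cap},\subseteq)$, where $E^{t\cap}$ is the closure of $E\cup\{V\}$ under arbitrary nonempty intersections. Proposition \ref{prop:intersection_lattice} says verbatim that this poset is a complete lattice, with meet given by intersection and join by the intersection of all edges above the union, so it is indeed an object of $\cat{CompLat}$; adjoining $V$ is precisely what supplies the empty meet (top). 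It is convenient to record, via Proposition \ref{prop:intersection_closure_operator}, the associated closure operator $\mathrm{cl}_H(A)=\bigcap\{e\in E^{t\cap}: A\subseteq e\}$, whose lattice of closed sets is exactly $E^{t\cap}$; I will phrase the morphism part in these terms.

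On morphisms, given a hypergraph morphism $m:(V_1,E_1)\to(V_2,E_2)$, which by Definition \ref{def:hyp} is a vertex map carrying each edge onto an edge, I would define $IntClose(m)(a):=\mathrm{cl}_{H_2}(m(a))$ for $a\in E_1^{t\cap}$: take the set image and then close it in the target. This lands in $E_2^{t\cap}$ because every value of $\mathrm{cl}_{H_2}$ is closed, and it is order preserving since both the direct image and $\mathrm{cl}_{H_2}$ are monotone. The substantive content is then that this map is a lattice homomorphism, i.e.\ that it preserves binary meets and joins.

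The main obstacle is exactly this preservation of the two lattice operations. It is useful to recognize $IntClose(m)$ as an \emph{image-and-close} map, which is a left adjoint in disguise and is therefore automatically join preserving provided $m$ is ``continuous'' in the sense that $m(\mathrm{cl}_{H_1}(A))\subseteq\mathrm{cl}_{H_2}(m(A))$ for all $A\subseteq V_1$; this same continuity is what makes the functor axioms formal below. I would prove continuity by writing a closed set as an intersection of edges $a=\bigcap_i e_i$ and using the morphism axiom $m(e_i)\in E_2$ to trap $m(a)$ inside the closed set $\bigcap_i m(e_i)$. The genuinely delicate direction is \emph{meet} preservation: the inclusion $\mathrm{cl}_{H_2}(m(a\cap b))\subseteq IntClose(m)(a)\cap IntClose(m)(b)$ is immediate from monotonicity, but the reverse inclusion is not, precisely because image-and-close maps need not preserve meets in general. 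This is the step I would budget the most care for, and the one where the edges-to-edges condition must be used in an essential way, and where one should check whether any additional hypothesis on $\cat{Hyp}$ or its morphisms is needed to force the symmetric situation (two vertices with the same image) to behave.

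Finally, functoriality is formal once continuity is in hand. The identity is preserved because $\mathrm{cl}_H$ fixes closed sets, so $IntClose(\mathrm{id})(a)=\mathrm{cl}_H(a)=a$. Composition is preserved because continuity of $m_2$ collapses the inner closure, giving $\mathrm{cl}_{H_3}(m_2(\mathrm{cl}_{H_2}(m_1(a))))=\mathrm{cl}_{H_3}(m_2(m_1(a)))$, which is exactly $IntClose(m_2\circ m_1)(a)$; both compositions occur in the same order, so the result is a covariant functor.
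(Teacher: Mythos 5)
There is a genuine gap, and it sits exactly where you said you would ``budget the most care'': meet preservation for your image-and-close map does not merely need care, it is false. Concretely, take $V_1=\{1,2,3\}$, $E_1=\{\{1,2\},\{2,3\}\}$, $V_2=\{w,s\}$, $E_2=\{\{w,s\},\{s\}\}$, and $m(1)=m(3)=w$, $m(2)=s$. This is a legitimate morphism in $\cat{Hyp}$ (Definition \ref{def:hyp}), since $m(\{1,2\})=m(\{2,3\})=\{w,s\}\in E_2$. The topped intersection complex of the source is the complete lattice $\{2\}<\{1,2\},\{2,3\}<\{1,2,3\}$, and that of the target is the chain $\{s\}<\{w,s\}$. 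Your map sends $\{1,2\}$ and $\{2,3\}$ to $\mathrm{cl}_{H_2}(\{w,s\})=\{w,s\}$, but it sends their meet $\{1,2\}\wedge\{2,3\}=\{2\}$ to $\mathrm{cl}_{H_2}(\{s\})=\{s\}$, whereas the meet of the images is $\{w,s\}\cap\{w,s\}=\{w,s\}\neq\{s\}$. So image-and-close preserves joins (your adjoint-style argument and the continuity lemma are fine) but genuinely fails to preserve meets, and no auxiliary continuity statement can repair this: it is precisely the ``two vertices with the same image'' scenario you flagged, and the category $\cat{Hyp}$ as defined imposes no hypothesis that excludes it. As written, your construction does not define a functor into $\cat{CompLat}$.

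This is also not the map the paper uses, so the proposal is a different route that happens not to work, rather than the paper's route with a step left unfinished. The paper's proof defines, for $e$ in the source lattice, $\mu(e):=\bigcap_{\phi\,:\,e\subseteq\phi\in E_1} m(\phi)$ --- the intersection of the images of all \emph{source} edges lying above $e$ --- instead of the closure of $m(e)$. On the example above, $\mu(\{2\})=m(\{1,2\})\cap m(\{2,3\})=\{w,s\}$, so $\mu$ is constant at $\{w,s\}$ and trivially a homomorphism; the point is that $\mu$ deliberately forgets how small $m(e)$ itself is and remembers only images of edges above $e$, which is what makes meets tractable (the work then shifts to joins, which the paper verifies by a separate computation). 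If you want to complete your write-up, you should replace image-and-close by this $\mu$ and redo both verifications; be aware, though, that even the paper's key identity $\bigcap_{\phi\supseteq\bigcap_{f\in F}f} m(\phi)=\bigcap_{f\in F}\bigcap_{\phi\supseteq f} m(\phi)$ deserves scrutiny when a small edge such as $\{2\}$ already belongs to $E_1$, since the same vertex-collapsing phenomenon puts pressure on that step as well.
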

  One starts with a hypergraph, forms its intersection complex, adds a top if necessary, and then renders it into a complete lattice by constructing its edge order. 
\begin{proof}
  Suppose that $m : (V_1,E_1) \to (V_2,E_2)$ is a hypergraph morphism.
  That means that $m$ consists of a function $m : V_1 \to V_2$ such that $m(e_1) \in E_2$ for each $e_1 \in E_1$.
  Therefore, $m$ induces a function $E_1 \to E_2$, which acts on the elements of the edge orders $(E_1,\sub) \to (E_2, \sub)$.
  This function is not, in general, a lattice homomorphism. 
  Instead, for each $e \in E_1$, define
  \begin{equation*}
    \mu(e) := \bigcap_{\phi : e \subseteq \phi \in E_1} m(\phi).
  \end{equation*}
  Notice that each $m(\phi)$ is well defined, as is the intersection, because $E_2$ is an intersection complex.

  For an arbitrary subset $F \subseteq E_1$, we now have
  \begin{equation*}
    \begin{aligned}
      \mu\left(\bigwedge_{f\in F} f \right) &= \mu\left( \bigcap_{f\in F} f \right) \\
      = \smashoperator{\bigcap_{\phi : \left(\bigcap_{f\in F} f\right)  \subseteq \phi \in E_1}} m(\phi)
      &= \bigcap_{f\in F} \smashoperator[r]{\bigcap_{\phi :  f  \subseteq \phi \in E_1}} m(\phi) \\
      &= \bigwedge_{f\in F} \mu(f).
    \end{aligned}
  \end{equation*}
  The third line follows because $E_1$ is closed under intersection.
  Additionally, we have
  \begin{equation*}
    \begin{aligned}
      \mu\left(\bigvee_{f \in F} f\right) &= \mu\left( \bigcap\left\{ e \in E_1 : \left( \bigcup_{f \in F} f \right) \sub e \right\} \right)\\
      =\smashoperator{\bigcap_{\phi : \left(\bigcup_{f \in F} f \right) \subseteq \phi \in E_1}} m(\phi)
      &=\bigcap\left\{ e \in E_2 : \smashoperator{\bigcap_{\phi : \left(\bigcup_{f \in F} f \right) \subseteq \phi \in E_1}} m(\phi) \sub e \right\}\\
      &=\bigcap\left\{ e \in E_2 : \mu \left(\bigcup_{f \in F} f \right) \sub e \right\}\\
      &=\bigcap\left\{ e \in E_2 : \left( \bigcup_{f \in F} \mu(f) \right) \sub e \right\}\\
      &=\bigvee_{f \in F} \mu(f).
    \end{aligned}
  \end{equation*}
\end{proof}

\begin{theorem} 
  \label{thm:concept_iso_intclose}
  For a context $(G,M,I)$, the intersection closure of the topped hypergraph obtained from a context is lattice isomorphic to $\mathfrak{B}_G$.
  Briefly, we have the following isomorphism in $\cat{CompLat}$
  \begin{equation*}
  (IntClose \circ HypRep)(G,M,I) \cong \setsys{B}(G,M,I).
  \end{equation*}
\end{theorem}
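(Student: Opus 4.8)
The plan is to show that the map sending each concept $(A,A') \in \setsys{B}(G,M,I)$ to its extent $A$ establishes the desired isomorphism, after recognizing that the extents are precisely the members of the topped intersection complex $(IntClose \circ HypRep)(G,M,I)$. Concretely, I would first unwind what $(IntClose \circ HypRep)(G,M,I)$ is as a set: $HypRep$ produces the hypergraph $H_c$ whose hyperedges are the attribute extents $\{m\}'$ for $m \in M$; then $IntClose$ forms the intersection complex $E_c^\cap$ of all multi-way intersections of these hyperedges, adjoins $G$ as a top if needed, and orders the result by inclusion to get a complete lattice via Proposition \ref{prop:intersection_lattice}.

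The crux is a set-level identity: the topped intersection complex of $H_c$ equals exactly the set of extents $\setsys{B}_G = \{A \subseteq G : A'' = A\}$. I would prove the two inclusions separately. For $\subseteq$, Lemma \ref{lem:double_subset} already tells us each generating hyperedge $\{m\}' = B'$ is an extent (satisfies $A'' = A$); then the Corollary to Proposition \ref{prop:context_complete_lattice} says the extents are closed under intersection, so every multi-way intersection of generators is again an extent, and the adjoined top $G = \emptyset'$ is an extent as well. For $\supseteq$, I would take an arbitrary extent $A = A''$ and show it arises as an intersection of attribute extents: since $A = A'' = (A')' = \bigcap_{m \in A'} \{m\}'$, it is exactly a multi-way intersection of generating hyperedges (with the empty intersection giving $G$, handled by the topping), hence lies in the topped intersection complex.

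Once the underlying sets are shown to coincide, I would verify that the inclusion order on the topped intersection complex matches the concept order on $\setsys{B}_G$, which is immediate since the concept order $K_1 \le K_2$ is \emph{defined} by $A_1 \subseteq A_2$ (Equation \ref{conorder}), i.e.\ by inclusion of extents. A bijective order isomorphism between lattices automatically preserves the meets and joins — the meets are intersections on both sides by Proposition \ref{prop:intersection_lattice} and Proposition \ref{prop:context_complete_lattice}, and the joins are determined by the order — so the map is a lattice isomorphism, giving the claimed isomorphism in $\cat{CompLat}$.

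I expect the main obstacle to be bookkeeping around the topping and the empty intersection. The subtlety is that $HypRep$ discards duplicates and may not literally contain $G$ as a hyperedge, so I must be careful that adjoining the top $G$ corresponds exactly to admitting the empty intent's extent $\emptyset' = G$ and does not introduce a spurious extra element; equally, I must confirm that when $A' = \emptyset$ the identity $A = \bigcap_{m \in A'}\{m\}'$ degenerates correctly to $A = G$, which is precisely why the \emph{topped} version is needed rather than the bare intersection complex. Handling these boundary cases cleanly, and checking that the $IntClose$ join formula of Proposition \ref{prop:intersection_lattice} genuinely agrees with the concept-lattice join of Proposition \ref{prop:context_complete_lattice} (both being the closure of the union), is where the argument demands the most care.
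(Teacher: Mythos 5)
Your proposal is correct and follows essentially the same route as the paper's own proof: both directions of the set-level identity are established the same way, using Lemma \ref{lem:double_subset} to show each hyperedge $\{m\}'$ is an extent, closure of extents under intersection for one inclusion, and the identity $B' = \bigcap_{m \in B} \{m\}'$ (the paper's Equation \eqref{eq:concept_iso_intclose}) for the reverse inclusion. Your treatment is if anything slightly more careful than the paper's, since you make explicit the handling of the top $G = \emptyset'$, the empty-intent boundary case, and the final step identifying $\setsys{B}_G$ with $\setsys{B}(G,M,I)$ via extent projection, all of which the paper leaves implicit.
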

The dual isomorphism also follows easily.
\begin{proof}

  By considering example 7.22 and remark 7.29 from \cite{davey_priestley_2002} we obtain that for the context $(G,M,I)$, the polar maps (Definition \ref{def:polar}) provide a closure operator on $\pwr(G)$. 
  We also know the set of closed sets under a closure operator form a topped intersection complex by Prop.\ \ref{prop:intersection_closure_operator}.

  For every $e\in E_c(G,M,I)$ there is, by definition, an $m\in M$ such that $e = {m}'$ so, by Lemma \ref{lem:double_subset}, $e$ is closed under $C$ in $\pwr(G)$.
  Each subset of $G$ which is closed is an extent by definition.
  Thus, since every edge is closed and the sets closed under a closure operator are an intersection complex, $H^{t\cap}_c(G,M,I)$ is a suborder of $\setsys{B}_G$ with the inclusion map.

  The only thing left to show is that $\setsys{B}_G \subseteq E^{t\cap}_c(G,M,I)$.
  If $A\in \setsys{B}_G$ then $A = B'$ for some $B$.
It follows directly from the definition of the polar map that  for $B\subseteq M$,
  \begin{equation}
  \label{eq:concept_iso_intclose}
B' =  \bigcap_{m\in B}m'.
  \end{equation}

  We again use the fact that by the definition of $E_c(G,M,I)$ that if $e\in E_c(G,M,I)$ then $e = m'$ for some $m\in M$.
  This combined with Equation \eqref{eq:concept_iso_intclose} above gives us that the set of all $B'$ is the set $\bigcap E^t_c(G,M,I)$.
  This set is closed under intersection, and is exactly $E^{t\cap}_c(G,M,I)$.
  In other words, we have shown that $\setsys{B}_G \subseteq E^{t\cap}_c(G,M,I)$.
\end{proof}


\section{Contexts and Topology}
\label{sec:concept_cosheaves}
According to the fundamental theorem of concept lattices (Lemmas \ref{lem:fund_1}--\ref{lem:fund_2} and Corollary \ref{cor:context_concept}), a formal context and a concept lattice determine each other completely.
On the other hand, the process of collapsing, which removes duplicate rows from the incidence matrix of a context, yields an irretrievable loss of information (Theorem \ref{thm:concept_iso_intclose}).
In exactly the same way, hypergraphs induced from a context $(G,M,I)$ using $HypRep$ also lose information about the identity and multiplicity of duplicated vertices (rows of $I$) and hyperedges (columns of $I$).
Since the lost information is local to the hyperedges,
we can capture this information by attributing it back to the duplicated hyperedges, which effectively ``undoes'' the collapsing process. 
The pertinent mathematical object for representing this attributed data is a \emph{cosheaf}.

\subsection{Cosheaf structures on set systems}
\label{sec:background_cosheaf}

Cosheaves allow one to specify data at various places within a partial order,
much like concept lattices.
In \cite{Robinson_relations}, it was recognized that the Dowker complex can be interpreted as constructing a cosheaf of abstract simplicial complexes on an abstract simplicial complex.
This Section combines these two insights to relate the cosheaf structure and concept lattice for a given context.

\begin{definition}\cite[Def. 12]{Robinson_relations}
    \label{def:coshvasc}
    The class of cosheaves of abstract simplicial complexes on abstract simplicial complexes is the class of objects of the category $\cat{CoShvAsc}$.  
    That is, each object of $\cat{CoShvAsc}$ is a cosheaf whose base space is an abstract simplicial complex, and whose spaces of local cosections are also abstract simplicial complexes.
    
    The morphisms of $\cat{CoShvAsc}$ are cosheaf morphisms subject to the constraint that both the base space and fiber maps are simplicial maps.
\end{definition}

In \cite[Thm. 4]{Robinson_relations}, it was shown that there is a functor $CoShvRep:\cat{Rel} \to \cat{CoShvAsc}$.  
When restricted to \textbf{total contexts}, those whose characteristic matrices have no zero rows and no zero columns, this becomes a faithful functor.
Using the isomorphism $\cat{ConLat}\to\cat{Ctx}$ and Proposition \ref{prop:ctx_sub_rel}, this means that there is a functor $\cat{ConLat}\to\cat{CoShvAsc}$ that transforms concept lattices into cosheaves.
This section proves in Theorem \ref{thm:concept_dowker} that this functor is injective on objects. 
Specifically, the cosheaves created by two distinct concept lattices are distinct themselves.

\begin{definition}
  \label{def:cosheaf}
  A \textbf{cosheaf of sets} $\cshf{C}$ \textbf{on a partial order} $(P,\leq)$ consists of the following specifications: 
  \begin{itemize}
  \item For each set $x \in P$ there is a set $\cshf{C}(x)$, called a \textbf{costalk} at x.
  \item For each $x \leq y \in P$,
    a function $(\cshf{C}(x \leq y)) : \cshf{C}(y) \to \cshf{C}(x)$,
    called the \textbf{extension} along $x \leq y$ such that whenever $x\leq y \leq z \in P$, $\cshf{C}(x\leq z) = ((\cshf{C}(x \leq y)) \circ  ((\cshf{C}(y \leq z))$.
  \end{itemize}
\end{definition}

The usual definition of a \textbf{cosheaf on a topological space} requires more axioms than appear in Definition \ref{def:cosheaf}.
In particular, a \textbf{cogluing} axiom is essential.  
The cosheaves used in this paper are always written over the Alexandrov topological space generated by the upward closed sets in the poset.
As such, what appears in Definition \ref{def:cosheaf} is the \emph{minimal specification} for a cosheaf on the Alexandrov topological space.

\begin{lemma}\cite[Cor. 2.5.21]{Curry}
  A cosheaf on a poset, as in Definition \ref{def:cosheaf}, uniquely specifies a cosheaf on the \textbf{Alexandrov topological space},
  for which the open sets are unions of upward closed sets in the poset.
\end{lemma}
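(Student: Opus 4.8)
The goal is to prove that a cosheaf on a poset, as specified by the minimal data in Definition~\ref{def:cosheaf}, extends uniquely to a genuine cosheaf on the Alexandrov topological space whose open sets are the upward-closed sets of the poset. This is attributed to \cite[Cor.~2.5.21]{Curry}, so the plan is essentially to unwind why the Alexandrov topology makes the cogluing axiom automatic.

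First I would recall the structure of the Alexandrov topology $\tau$ on $(P,\leq)$: its open sets are exactly the upward-closed subsets $U \subseteq P$, and crucially every point $x \in P$ has a \emph{minimal} open neighborhood, namely the principal up-set ${\up}x = \{y \in P : y \geq x\}$. These principal up-sets form a basis, and every open set is a union of them. The key structural fact is that open sets are closed under arbitrary intersections (not just finite ones), which is the defining feature distinguishing Alexandrov spaces from general topological spaces.

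Next I would describe how the costalk data $\shf{C}(x)$ and the extension maps $\shf{C}(x \leq y)\colon \shf{C}(y) \to \shf{C}(x)$ assemble into a precosheaf on $\tau$. The natural move is to define the value on an open set $U$ as a colimit: $\widehat{\shf{C}}(U) := \varinjlim_{x \in U} \shf{C}(x)$, where the diagram is indexed by the poset $U$ with the extension maps (reversed, since cosheaves are covariant on the poset but the extensions run downward). For a principal up-set ${\up}x$, the element $x$ is the minimum, so the colimit collapses to $\shf{C}(x)$ itself, recovering the costalk data on basis elements. The inclusion $U \subseteq U'$ of open sets then induces the required corestriction/extension map on colimits by functoriality.

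The main obstacle, and the heart of the verification, is the cogluing axiom: one must check that for an open cover $U = \bigcup_i U_i$, the diagram built from $\widehat{\shf{C}}(U_i)$ and the pairwise intersections $\widehat{\shf{C}}(U_i \cap U_j)$ has $\widehat{\shf{C}}(U)$ as its colimit. The reason this works in the Alexandrov setting is precisely that arbitrary intersections of opens remain open and that each point has a smallest neighborhood: the cover by principal up-sets ${\up}x$ for $x \in U$ is cofinal, intersections ${\up}x \cap {\up}y$ are again unions of principal up-sets, and so the cogluing colimit reduces to the very colimit defining $\widehat{\shf{C}}(U)$. The final step is to argue uniqueness: since the principal up-sets form a basis and cosheaf values are determined by the cogluing axiom over covers, any cosheaf on $\tau$ agreeing with $\shf{C}$ on principal up-sets must agree everywhere, so the extension is unique. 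I would lean on \cite[Cor.~2.5.21]{Curry} to supply the categorical details of these colimit computations rather than reconstructing them in full.
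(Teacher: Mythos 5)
Your proposal is correct, and it reconstructs exactly the argument the paper relies on: the paper gives no proof of its own here, deferring entirely to the citation of Curry, and your construction (extend the costalk data by colimits over the poset of points of an open set, observe that principal up-sets $\up x$ have $x$ as minimum so the colimit collapses to $\shf{C}(x)$, verify cogluing using minimal open neighborhoods, and deduce uniqueness from determination on a basis) is precisely the standard left-Kan-extension argument given in that reference. No gaps beyond the categorical bookkeeping you explicitly delegate to Curry, which is the same delegation the paper makes.
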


Since the set of open sets in a topological space ordered by inclusion is a distributive lattice via Birkhoff's representation theorem (see e.g. \cite[pp.\ 252, 353]{StR97} or \cite{priestley_1970}), we have the following.
\begin{corollary} \cite{Robinson_relations}
  \label{cor:base_space_alexandrov}
  Given a cosheaf on a poset, the diagram of the corresponding cosheaf on the Alexandrov topological space is that of a distributive lattice.
  As such, the diagram has a unique top and bottom element.  
  This defines a covariant functor $Base : \cat{CoShvAsc} \to \cat{Asc}$.
\end{corollary}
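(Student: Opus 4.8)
The plan is to combine Birkhoff's representation theorem (cited immediately above the corollary) with elementary properties of Alexandrov topologies, and then to verify functoriality of $Base$ directly. The whole argument is short because the hard topological input is already supplied by the preceding lemma.

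First I would recall that, by the preceding lemma, the cosheaf on the poset extends uniquely to a cosheaf on the Alexandrov space whose open sets are exactly the upward-closed sets together with their unions. The indexing structure of this extended cosheaf is therefore the collection of open sets ordered by inclusion. For any topological space this collection is a distributive lattice: the join is union and the meet is intersection, and distributivity is inherited from the ambient power set, where it holds automatically; Birkhoff's representation theorem makes this identification explicit in the Alexandrov case, which establishes the first assertion. For the top and bottom, I would note that the whole space $X$ and the empty set $\emptyset$ are always open, so they are respectively the maximum and minimum of the lattice of opens, and since a lattice admits at most one maximum and one minimum, these two elements are unique.

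It then remains to define $Base$ and check the functor axioms. On objects, $Base$ sends a cosheaf of abstract simplicial complexes to its base space, which is an abstract simplicial complex by Definition \ref{def:coshvasc}. On morphisms, $Base$ returns the base-space component of a $\cat{CoShvAsc}$ morphism, which is required to be a simplicial map, again by Definition \ref{def:coshvasc}. I would verify that the identity cosheaf morphism has the identity simplicial map as its base component, and that, since both cosheaf morphisms and simplicial maps compose through composition of their underlying vertex functions, $Base$ preserves composition in the same order; hence $Base : \cat{CoShvAsc} \to \cat{Asc}$ is covariant.

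The step requiring the most care is keeping the two ordered structures distinct. The face poset of the base abstract simplicial complex is in general \emph{not} a lattice, whereas the lattice of Alexandrov open sets (the up-sets) always is distributive. I would therefore emphasize that the claim ``the diagram is a distributive lattice'' refers to the open-set indexing of the extended cosheaf, while the $Base$ functor forgets this lattice and retains only the underlying abstract simplicial complex; conflating the two is the only real pitfall in an otherwise routine verification.
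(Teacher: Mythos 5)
Your proposal is correct and takes essentially the same approach as the paper: the paper justifies this corollary exactly by combining the preceding lemma (unique extension of a cosheaf on a poset to the Alexandrov space) with the observation, via Birkhoff's representation theorem, that the open sets of a topological space ordered by inclusion form a distributive lattice, leaving the functoriality of $Base$ as a routine check on base spaces and base-map components of morphisms. Your closing caution about not conflating the face poset (generally not a lattice) with the lattice of Alexandrov open sets is consistent with the paper's own later discussion rather than a departure from it.
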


Notationally, $\cshf{C}$ is used throughout the paper to define a general cosheaf on a poset,
whereas $\cshf{R}^0$ and $\cshf{R}$ are used to refer to the specific cosheaves defined in Definition \ref{def:dowker_cosheaf}.

\begin{definition}\cite{Robinson_relations}
  \label{def:dowker_cosheaf}
  Define a \textbf{cosheaf representation of a context} $(G,M,I)$ by the cosheaf $\cshf{R} = CoShvRep(G,M,I)$ on an abstract simplicial complex, defined by the following recipe.
  \begin{description}
  \item[Costalks:] If $\sigma$ is a simplex of the Dowker complex $(D \circ HypRep)(G,M,I)$ then 
    \begin{equation*}
      \cshf{R}(\sigma) := D(\sigma', \sigma, (I^T|_{\sigma,\sigma'})). 
    \end{equation*}
    (Recall that the prime operator was defined in Equation \eqref{eq:prime_def}).
    
  \item[Extensions:] If $\sigma \subseteq \tau$ are two simplices of the Dowker complex $D(G,M,I)$,
    then the extension $\cshf{R}(\sigma \subseteq \tau): \cshf{R}(\tau) \to \cshf{R}(\sigma)$ is the simplicial map along the inclusion $\tau' \hookrightarrow \sigma'$.
  \end{description}
  For brevity, we will often refer to $\cshf{R}$ as the \textbf{Dowker cosheaf}.

  The cosheaf arising from restricting costalks of $\cshf{R}$ to contain only the vertices will be denoted $\cshf{R}^0$.  Specifically,
  \begin{equation*}
  \cshf{R}^0(\sigma) = \sigma'.
  \end{equation*}
\end{definition}

  Figure \ref{fig:running_cosheaf} shows the Dowker cosheaf for the same context as in the running example.
  Notice that the arrows point downward.
  Since it is the diagram of a cosheaf, the corestriction maps are contravariant to the face inclusions.
  It is also worth noting that all the corestriction maps shown are inclusions. 

  \rem{

  Figure \ref{fig:running_dual_cosheaf} shows the Dowker cosheaf associated to the dual context.
  Notice how the Galois pairs of base and costalk simplices that appear in both Figures \ref{fig:running_cosheaf} and \ref{fig:running_dual_cosheaf} correspond exactly to the concepts in the concept lattice.

}

\begin{figure}[h]
  \begin{center}
\includegraphics[height=2in]{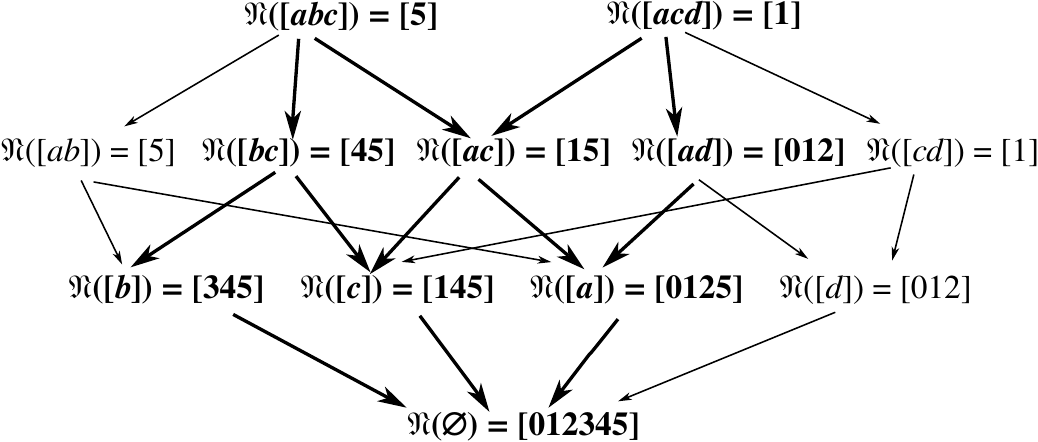}
    \caption{The Dowker cosheaf $\cshf{R}(G,M,I)$ for the context defined in Example \ref{eg:running}.  Concepts are marked in {\bf bold}. }
    \label{fig:running_cosheaf} 
  \end{center}
\end{figure}

Figure \ref{figures/cosheaf} shows the example using our Galois notation for simplicity. Comparing with Figure \ref{figures/H_final_CL2}, we can see the inclusion of the new Galois pairs $\gal{ab}{5}, \gal{cd}{1}$ and $\gal{d}{012}$, reflecting members of the Dowker cosheaf not part of the concept lattice.
In Figure \ref{fig:running_cosheaf}, these are the only non-bold cells, that is, the only cells that are not concepts.

\mypng{.4}{figures/cosheaf}{Cosheaf in Galois notation.}

\rem{

\begin{figure}[h]
  \begin{center}
    \includegraphics[width=6in]{figures/dl_example_dual_cosheaf.pdf}
    \caption{This shows the Dowker cosheaf of the dual context $\cshf{R}(M,G,I^T)$ defined in Example \ref{eg:running}. Concepts are marked in {\bf bold}.}
    \label{fig:running_dual_cosheaf}
  \end{center}
\end{figure}

}

\subsection{Concept Lattices and Cosheaves}

This section elucidates the relationship between the concept lattice and the Dowker cosheaf.
Although we generally prefer to work with the Dowker cosheaf on the face poset of the Dowker complex to reduce notational clutter,
this clouds its connection to the concept lattice.
Specifically, the concept lattice is a labeled \emph{lattice}, but the face poset of the Dowker complex may not be a lattice.
To ensure the correspondence between the two objects,
we must work over the Alexandrov topology for the face poset.
This succeeds because of Corollary \ref{cor:base_space_alexandrov}, namely that the base space of the Dowker cosheaf on the Alexandrov space corresponds to a distributive lattice.

\begin{proposition}
  \label{prop:cl_embeds_dowker}
  There is an order embedding from $\setsys{B_G}(G,M,I)$ into the topped face poset $A$ of $D(G,M,I)$.
\end{proposition}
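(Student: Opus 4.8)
The plan is to exhibit an explicit map and verify that it both preserves and reflects order. Recall that $\setsys{B}_G = \{X \subseteq G : X'' = X\}$ is ordered by inclusion, and that the simplices of $D(G,M,I) = (DH \circ HypRep)(G,M,I)$ are exactly those $\sigma \subseteq G$ contained in some hyperedge $\{m\}'$; equivalently, $\sigma$ is a face precisely when $\sigma' \neq \emptyset$, since $\sigma \subseteq \{m\}'$ holds if and only if $m \in \sigma'$. The topped face poset $A$ is this collection of faces, ordered by inclusion, together with a top element $\top_A$ (adjoined if the face poset does not already possess one).

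First I would pin down which extents are already faces. If $X \in \setsys{B}_G$ satisfies $X' = \emptyset$, then $X = X'' = \emptyset' = G$; hence the only extent that can fail to be a simplex is the top extent $G$, and it does so precisely when $G' = \emptyset$. Consequently every proper extent $X \subsetneq G$ has $X' \neq \emptyset$ and is therefore a face of $D(G,M,I)$. This is exactly the place where the qualifier \emph{topped} earns its keep: the top of the extent lattice may share no common attribute and thus may have no corresponding simplex, a situation absorbed by the adjoined top $\top_A$.

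Then I would define $\func{\phi}{\setsys{B}_G}{A}$ by $\phi(X) = X$ for $X \subsetneq G$ (legitimate by the previous step) and $\phi(G) = \top_A$. Injectivity is immediate, as distinct proper extents are distinct simplices and $\top_A$ lies above all of them. For the order-embedding property I would check that $X \subseteq Y$ holds if and only if $\phi(X) \le_A \phi(Y)$, by cases: when $X$ and $Y$ are both proper, both sides reduce to set inclusion among faces; when $Y = G$, both sides hold automatically (since $G$ is the top of $\setsys{B}_G$ and $\top_A$ the top of $A$); and when $X = G \neq Y$, both sides fail (because $G \not\subseteq Y$ for $Y \subsetneq G$, while $\top_A \not\le_A \phi(Y)$ for $\phi(Y)\neq \top_A$).

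The main obstacle is conceptual rather than computational: one must correctly isolate the single point of mismatch between the extent lattice and the face poset, namely the possibly-absent top simplex, and confirm that \emph{no} proper extent is ever missing from the Dowker complex. Once the implication ``$X' = \emptyset \Rightarrow X = G$'' is in hand, the case analysis is routine. As a cross-check, the same embedding can be obtained more abstractly by combining Theorem \ref{thm:concept_iso_intclose} (which identifies $\setsys{B}_G$ with the topped intersection complex) with the inclusion $E^\cap \sub E^\sub$ of the intersection complex into the Dowker complex; but the direct construction above makes the order-reflection, and the role of topping, most transparent.
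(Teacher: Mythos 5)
Your proof is correct and takes essentially the same approach as the paper: both use the inclusion map from $\setsys{B}_G$ into the topped face poset, resting on the characterization that $\sigma$ is a face of $D(G,M,I)$ precisely when $\sigma' \neq \emptyset$. Your version is in fact slightly more careful, since you explicitly verify the one point the paper leaves implicit---that an extent with empty intent must equal $G$ (as $X = X'' = \emptyset' = G$), so every proper extent really is a simplex and only the top can need the adjoined element.
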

Recall that the topped face poset contains all of the simplices of $D(G,M,I)$ as well as the set of all vertices, which happens to be $G$.
\begin{proof}
  The elements of $\setsys{B}_G$ are of the form $A \subseteq G$ such that $A = A''$.
  
  The elements of $A$ are defined as $\sigma \in D(G,M,I)$ along with the whole set.
  Recall $\sigma \in D(G,M,I)$ if $\sigma' \neq \emptyset$.
  So $f: \setsys{B}_G \to A$ where $f$ is the inclusion map defines an order embedding since the order on both sets is the subset order.
\end{proof}

Notice that since the poset $A$ is a subposet of the open sets of the Alexandrov topology for $D(G,M,I)$,
Proposition \ref{prop:cl_embeds_dowker} can be interpreted as relating the concept lattice to the Dowker cosheaf of $(G,M,I)$.
This intuitive idea goes a bit further.

\begin{proposition}
  \label{prop:dowker_concepts_1}
  The set of unique costalks of the Dowker cosheaf $\cshf{R}$ for a context $(G,M,I)$ with the inclusion partial order forms a lattice.
  This partial order is isomorphic to $\setsys{B}_M$.
\end{proposition}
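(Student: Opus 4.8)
The plan is to compute the costalks explicitly, show each is a full simplex determined solely by an intent of the context, and then transport the lattice structure of the intents across this correspondence.

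First I would unwind Definition \ref{def:dowker_cosheaf}. Fix a simplex $\sigma$ of $D(G,M,I)$; its costalk is $\cshf{R}(\sigma) = D(\sigma',\sigma, I^T|_{\sigma,\sigma'})$, a Dowker complex whose vertex set is $\sigma'$. The key observation is that every $g \in \sigma$ is $I$-related to every $m \in \sigma'$, which is immediate from the definition $\sigma' = \{m : gIm \text{ for all } g \in \sigma\}$. Consequently, in the restricted transposed context each hyperedge produced by $HypRep$ is all of $\sigma'$, so taking the downward closure $DH$ yields the full simplex $\cshf{R}(\sigma) = \pwr(\sigma')$. In particular the costalk depends on $\sigma$ only through the intent $\sigma'$, and $\sigma'$ is recoverable as the vertex set of the costalk. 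I expect this computation to be the conceptual heart of the argument; everything after it is bookkeeping.

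Next I would set up the order isomorphism. Define $\Phi$ sending each costalk $\cshf{R}(\sigma)$ to $\sigma' \in \setsys{B}_M$. This is well defined because the full simplex $\pwr(\sigma')$ determines $\sigma'$ as its vertex set, and injective because distinct intents give full simplices on distinct vertex sets. Since $\pwr(\sigma_1') \subseteq \pwr(\sigma_2')$ as subcomplexes if and only if $\sigma_1' \subseteq \sigma_2'$, the map $\Phi$ is an order embedding of the inclusion order on costalks into $(\setsys{B}_M, \subseteq)$. For the target I would note that $\sigma'$ is always an intent (as $(\sigma')'' = \sigma'$), and conversely every intent $B$ is realized as $\sigma'$ for $\sigma = B'$, which is a genuine simplex precisely when $B \neq \emptyset$, since then $\sigma' = B'' = B \neq \emptyset$.

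Finally I would invoke that $\setsys{B}_M$ is closed under intersection (the corollary to Proposition \ref{prop:context_complete_lattice}), so by Proposition \ref{prop:intersection_closure_operator} the intents form a topped $\bigcap$-structure and hence a complete lattice under inclusion; equivalently $(\setsys{B}_M,\subseteq)$ is the order dual of the concept lattice of Proposition \ref{prop:context_complete_lattice}. Transporting this structure across $\Phi$ exhibits the poset of unique costalks as a lattice isomorphic to $\setsys{B}_M$. The main obstacle I anticipate is the bookkeeping of the extremal intents: the empty intent (present exactly when the top concept's intent $G'$ is empty) is never realized as a $\sigma'$, so naively one recovers only the nonempty part of $\setsys{B}_M$, and simply deleting a bottom element can destroy meets. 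I would resolve this by working over the Alexandrov completion, whose base space supplies a unique top and bottom (Corollary \ref{cor:base_space_alexandrov}), thereby restoring the extremal costalks needed to match the full intent lattice $\setsys{B}_M$.
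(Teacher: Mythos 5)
Your proposal is correct and takes essentially the same route as the paper's proof: both identify each costalk $\cshf{R}(\sigma)$ as the minimal simplicial complex having $\sigma'$ as a face, so that inclusion of costalks mirrors inclusion of the intents $\sigma'$, and then identify the set of unique $\sigma'$ with $\setsys{B}_M$. If anything, you are more careful than the paper, whose proof cites Proposition \ref{prop:concept_inc_posrep} (an inclusion $\setsys{B}_M \subseteq PosRep(M,G,I^T)$, not an equality) to claim the unique $\sigma'$ are exactly $\setsys{B}_M$, glossing over the extremal intents that you correctly flag and repair via the Alexandrov topology (Corollary \ref{cor:base_space_alexandrov}), which is also how the paper's surrounding discussion handles the issue. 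One small slip in your accounting: $\sigma = B'$ is a simplex only when $B' \neq \emptyset$ as well, so besides the empty intent, the intent $M$ itself is also unrealized whenever $M' = \emptyset$ (both failures occur in the running example, where neither $\gal{abcd}{}$ nor $\gal{}{012345}$ labels a costalk); since your Alexandrov completion supplies both a missing top and a missing bottom, your fix covers this dual case as well, but it deserves explicit mention.
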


So while the lattice structure of $\setsys{B}_M$ can be recovered from the Dowker cosheaf, labeling the lattice with the actual concepts themselves allows complete recovery of the Dowker cosheaf.

\begin{proof}
  Recall for $\sigma \in D(G,M,I)$, we defined $\cshf{R}(\sigma) = D(\sigma', \sigma, (I^T|_{\sigma,\sigma'}))$.
  $D(\sigma', \sigma, (I^T|_{\sigma,\sigma'}))$ is just the minimal abstract simplicial complex such that $\sigma'$ is a face.
  So the set of unique costalks of $\cshf{R}$ is the set of abstract simplical complexes that are minimal for each $\sigma'$ for all $\sigma \in D(G,M,I)$.
  
  The face relation provides us with a partial order on these abstract simplicial complexes which is an order isomorphism to the set of unique $\sigma'$.
  Using Proposition \ref{prop:concept_inc_posrep}, the set of unique $\sigma'$ is exactly the set $\setsys{B}_M$.
  Therefore, there is naturally a lattice isomorphism from the set of unique costalks to $\setsys{B}_M$.
\end{proof}

\begin{corollary}
  Proposition \ref{prop:dowker_concepts_1} can be easily dualized to work on the Dowker cosheaf for the dual context.
\end{corollary}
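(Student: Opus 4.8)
The plan is to apply Proposition~\ref{prop:dowker_concepts_1} \emph{verbatim} to the dual context $(M,G,I^T)$ in place of $(G,M,I)$, and then to re-express its conclusion in the notation of the original context. Here $(M,G,I^T)$ is the context whose objects are $M$, whose attributes are $G$, and whose incidence satisfies $m\,I^T\,g$ if and only if $g\,I\,m$; this is again a bona fide formal context, so Proposition~\ref{prop:dowker_concepts_1} applies to it without modification and asserts that the unique costalks of $\cshf{R}(M,G,I^T)$, ordered by inclusion, form a lattice isomorphic to the lattice of closed attribute-sets of the dual context.

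The first step is to pin down how the two polar operators interchange. Writing $\dagger$ for the prime operator of $(M,G,I^T)$, for any $\tau \sub M$ we have
\[
  \tau^\dagger = \{\, g \in G : m\,I^T\,g \text{ for all } m \in \tau \,\} = \{\, g \in G : g\,I\,m \text{ for all } m \in \tau \,\} = \tau',
\]
where $\tau'$ is the original prime operator of Equation~\eqref{eq:prime_def}. Thus the polar map of the transpose is exactly the original polar map with its arguments swapped, and consequently the double-prime closure of the dual context, restricted to subsets of $G$, coincides with the original extent closure $A \mapsto A''$. In particular the closed sets of the dual context that live in $G$ are precisely the extents $\setsys{B}_G$ of $(G,M,I)$.

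The second step is to carry the costalk recipe of Definition~\ref{def:dowker_cosheaf} through this substitution. For a simplex $\tau$ of $D(M,G,I^T)$ (equivalently $\tau \sub M$ with $\tau^\dagger \neq \emptyset$), the costalk $\cshf{R}(M,G,I^T)(\tau)$ is the minimal abstract simplicial complex having $\tau^\dagger = \tau'$ as a face, so distinct costalks correspond bijectively and order-preservingly to distinct values of $\tau'$. Running the same identification used in the proof of Proposition~\ref{prop:dowker_concepts_1}, now on the dual, these values range over exactly the closed subsets of $G$, namely $\setsys{B}_G$. Composing these bijections yields the dual statement: the unique costalks of $\cshf{R}(M,G,I^T)$, under inclusion, form a lattice isomorphic to $\setsys{B}_G$.

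The only genuine content lies in the first step---checking that the prime operator of $I^T$ is the arguments-swapped prime operator of $I$, so that the two closure operators agree---and this is pure bookkeeping with the definition of $I^T$. Everything else is an instance of the already-established proposition, which is precisely why the result is a corollary: the Dowker cosheaf construction is symmetric in its two factors up to transposition, so dualizing is a matter of relabeling $G \leftrightarrow M$ and $\setsys{B}_M \leftrightarrow \setsys{B}_G$ rather than of any new argument.
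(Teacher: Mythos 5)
Your proof is correct and is precisely the dualization the paper has in mind: the paper states this corollary without any written proof, taking it as evident that one applies Proposition \ref{prop:dowker_concepts_1} verbatim to the transpose context $(M,G,I^T)$, checks that its prime operator agrees with that of $(G,M,I)$ (so its closed attribute-sets are exactly $\setsys{B}_G$), and concludes that the unique costalks of $\cshf{R}(M,G,I^T)$ under inclusion form a lattice isomorphic to $\setsys{B}_G$. Your write-up simply makes this bookkeeping explicit, which is exactly what the paper means by ``easily dualized.''
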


\begin{proposition} (Compare with \cite[Props. 4, 5]{Robinson_relations})
  There is a covariant functor $PosRep: \cat{Cxt} \to \cat{Pos}$, which can be called the \textbf{poset representation of a context} that takes each $(G,M,I)$ to a collection $PosRep(G,M,I)$ of subsets of $\pwr(G)$,
  for which $A\in PosRep(G,M,I)$ if there is a $m \in M$ such that $gIm$ for every $g \in A$.
  The elements of $Posrep(G,M,I)$ are ordered by subset inclusion.
  This poset is isomorphic to the face poset on the Dowker complex of $(G,M,I)$.
\end{proposition}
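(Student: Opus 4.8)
The plan is to first pin down $PosRep(G,M,I)$ explicitly, then observe that the claimed isomorphism with the Dowker face poset is essentially the identity map, and finally verify functoriality. Unwinding the definition, $A \in PosRep(G,M,I)$ holds exactly when there is an $m \in M$ with $gIm$ for all $g \in A$, i.e. $A \sub \{m\}' = m'$ for some $m$, equivalently $A' \neq \emptyset$. Hence
\[ PosRep(G,M,I) = \{\, A \sub G : A' \neq \emptyset \,\}, \]
a family of subsets of $G$, and this is immediately a poset under the restriction of the inclusion order on $\pwr(G)$.

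For the isomorphism I would unwind $HypRep$ and $DH$. By Definition \ref{def:hyprep}, $HypRep(G,M,I)$ has vertex set $G$ with hyperedges the extents $m' = \{m\}'$, and by Definition \ref{def:dowker_complex} the complex $D(G,M,I)$ is the subset-closure of these edges. Therefore a nonempty $\sigma \sub G$ is a face of $D(G,M,I)$ if and only if $\sigma \sub m'$ for some $m \in M$, if and only if $m \in \sigma'$ for some $m$, if and only if $\sigma' \neq \emptyset$. Thus the nonempty faces of $D(G,M,I)$ are precisely the nonempty members of $PosRep(G,M,I)$; since both collections carry the subset order, the identity map is an order isomorphism. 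The only genuinely delicate point here is the empty-set bookkeeping: because $\emptyset' = M$, the element $\emptyset$ lies in $PosRep(G,M,I)$ exactly when the empty simplex is admitted into the face poset, and the two conventions can be matched on the nose. I expect this reconciliation to be the main (if minor) obstacle; everything else is a direct unwinding.

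Finally, for functoriality, given a $\cat{Ctx}$-morphism $(f,g):(G_1,M_1,I_1)\to(G_2,M_2,I_2)$ I would set $PosRep(f,g)(A) := f(A)$, the image of $A$ under $f$. To see this lands in the target poset, suppose $A' \neq \emptyset$ and choose $m \in A'$; then $a I_1 m$ for every $a \in A$, so Proposition \ref{prop:ctx_sub_rel} yields $f(a) I_2 g(m)$ for every $a \in A$, whence $g(m) \in f(A)'$ and $f(A)' \neq \emptyset$, so $f(A) \in PosRep(G_2,M_2,I_2)$. Order preservation is immediate, since $A \sub B$ forces $f(A) \sub f(B)$, and because composition in $\cat{Ctx}$ is componentwise composition of the object and attribute functions, $PosRep$ respects composition and identities. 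This establishes $PosRep : \cat{Ctx} \to \cat{Pos}$ as a covariant functor whose value on each context is order-isomorphic to the face poset of its Dowker complex.
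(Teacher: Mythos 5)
Your proof is correct, but it takes a genuinely different and more self-contained route than the paper. The paper disposes of this proposition in a single line, asserting that it ``follows immediately'' from Theorem \ref{thm:concept_iso_intclose} and Proposition \ref{prop:intersection_lattice}; that is, it leans on the already-established identification of the intersection closure of $HypRep(G,M,I)$ with the extent lattice $\setsys{B}_G$ and on the lattice structure of topped intersection complexes. You instead unwind the definitions directly: you characterize $PosRep(G,M,I)$ as $\{A \sub G : A' \neq \emptyset\}$, observe that the nonempty such $A$ are exactly the simplices of $D(G,M,I)$ (since $\sigma \sub m'$ for some $m$ iff $\sigma' \neq \emptyset$), so the identity map is the order isomorphism, and then verify functoriality explicitly by setting $PosRep(f,g)(A) = f(A)$ and invoking Proposition \ref{prop:ctx_sub_rel}. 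Your route buys two things the paper's citation does not: first, it actually proves the functoriality claim, which is part of the statement but is not addressed by the two purely object-level results the paper points to; second, it exhibits the isomorphism concretely as the identity on underlying sets rather than mediating it through the concept lattice, which also makes clear that no lattice-theoretic machinery is needed at all. The empty-set bookkeeping you flag is indeed the only point of friction --- $\emptyset \in PosRep(G,M,I)$ whenever $M \neq \emptyset$, while the paper's abstract simplicial complexes admit only nonempty faces --- and this minor mismatch resides in the statement itself, independent of which proof one adopts.
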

\begin{proof}
The statement follows immediately from Theorem \ref{thm:concept_iso_intclose} and Proposition \ref{prop:intersection_lattice}.
\end{proof}

It is of note that the condition for $A \subseteq G$ to be in $PosRep(G,M,I)$ is equivalent to saying $A'$ is non empty.
This is weaker than the condition for $A \subseteq G$ to be in $\setsys{B}_G$ which is $A = A''$.
This is shown in Proposition \ref{prop:concept_inc_posrep} below.

\begin{proposition}
  \label{prop:concept_inc_posrep}
  Given a context $(G,M,I)$, the concept lattice lies within the image of its poset representation.
  Briefly, $\setsys{B}_G \subseteq PosRep(G,M,I) \cup \{G,\emptyset\} \subseteq \pwr(G)$.
  Moreover, the dual statement, $\setsys{B}_M \subseteq PosRep(M,G,I^T) \subseteq \pwr(M)$ is also true.
\end{proposition}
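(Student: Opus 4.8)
The plan is to reduce everything to a single observation about the polar map, one already flagged in the text immediately preceding the statement: for any $A \sub G$, membership $A \in PosRep(G,M,I)$ is equivalent to $A' \neq \emptyset$. Unwinding the defining condition of $PosRep$, $A \in PosRep(G,M,I)$ means there exists $m \in M$ with $gIm$ for every $g \in A$, and such an $m$ is precisely a witness that $A' = \{m \in M : gIm \text{ for all } g \in A\}$ is nonempty. So the proposition becomes a statement about exactly when an extent has nonempty polar image, and the proof is a two-line case split driven by this equivalence.

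First I would prove the primal inclusion $\setsys{B}_G \sub PosRep(G,M,I) \cup \{G,\emptyset\}$. Take any extent $A \in \setsys{B}_G$, so that $A'' = A$. If $A' \neq \emptyset$, then by the observation above $A \in PosRep(G,M,I)$. Otherwise $A' = \emptyset$, and then $A = A'' = \emptyset'$; using the vacuous-quantifier convention in Equation \eqref{eq:prime_def}, $\emptyset' = \{g \in G : gIm \text{ for all } m \in \emptyset\} = G$, so $A = G \in \{G,\emptyset\}$. In either case $A$ lands in the claimed union, and the rightmost inclusion $PosRep(G,M,I)\cup\{G,\emptyset\} \sub \pwr(G)$ is immediate since every set in sight is a subset of $G$.

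The dual statement follows by running the identical argument on the transposed context $(M,G,I^T)$, whose polar map is just the original prime restricted to subsets of $M$, so that $B \in PosRep(M,G,I^T)$ iff $B' \neq \emptyset$. The one delicate step—and what I expect to be the main obstacle—is the behavior at the extremal element. For an intent $B \in \setsys{B}_M$ with $B' = \emptyset$ the same computation gives $B = B'' = \emptyset' = M$, and one must check that this top element is accounted for correctly; this is exactly the role played by the correction term $\{G,\emptyset\}$ in the primal statement, and it is also the source of the apparent asymmetry between the primal and dual formulations. I would therefore verify that the corresponding edge case is benign under the standing nondegeneracy/totality assumptions on the contexts in play (where some object is incident to all attributes, forcing $M' \neq \emptyset$ and hence $M \in PosRep(M,G,I^T)$). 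Apart from this boundary bookkeeping, the argument is entirely routine and rests solely on the single equivalence above.
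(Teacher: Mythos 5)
Your proof of the primal inclusion is correct, and it follows the same basic reduction as the paper's: both arguments hinge on comparing the closure condition $A''=A$ against membership in $PosRep(G,M,I)$, which you correctly identify as $A' \neq \emptyset$. The difference is one of rigor, and yours is the more careful of the two. The paper's proof misquotes the definition of $PosRep$ (it writes ``for every $g \in A$ there exists $m \in M$ with $gIm$'' instead of ``there exists $m \in M$ with $gIm$ for every $g \in A$''), asserts that $A''=A$ is ``strictly stronger'' than that condition, and concludes $\setsys{B}_G \sub PosRep(G,M,I)$ outright, never invoking the correction term $\{G,\emptyset\}$ that appears in the statement. That assertion fails exactly at the top element: if $G' = \emptyset$ then $G = G'' \in \setsys{B}_G$ while $G \notin PosRep(G,M,I)$. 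Your case split on $A' = \emptyset$ versus $A' \neq \emptyset$, together with the computation $A' = \emptyset \Rightarrow A = A'' = \emptyset' = G$, supplies precisely the step the paper skips and is what actually justifies the statement as written.

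Your suspicion about the dual statement is also well founded, and it points to a defect in the paper rather than in your argument: as printed, $\setsys{B}_M \sub PosRep(M,G,I^T)$ lacks the correction term, and it is falsified by the paper's own running example, where $M = \{0,1,2,3,4,5\}$ is the intent of the bottom concept $\gal{}{012345}$ but no object is incident to all six attributes, so $M' = \emptyset$ and $M \notin PosRep(M,G,I^T)$. One caution about your proposed repair: in this paper ``total'' means $I$ has no zero rows and no zero columns, which does not force some object to carry all attributes; the running example is total and still violates the uncorrected dual. The honest fix is the symmetric one, namely $\setsys{B}_M \sub PosRep(M,G,I^T) \cup \{M,\emptyset\}$, which your case-split argument proves verbatim.
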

\begin{proof}
  By definition $A \subseteq G$ implies that $A \in \pwr(G)$.
  By definition for $A \subseteq G$, it follows that $A \in PosRep(G,M,I)$ if for every $g \in A$ there exists $m \in M$ such that $gIm$.
  This shows that $PosRep(G,M,I) \subseteq \pwr(G)$.
  For every $A \subseteq G$, it follows that $A \in \setsys{B}_G$ if $A'' = A$.
  (This is strictly stronger than the condition for $A$ to be in $PosRep(G,M,I)$.)
  This implies $\setsys{B}_G \subseteq PosRep(G,M,I)$. 
\end{proof}

Moreover, combining Propositions \ref{prop:cl_embeds_dowker} through \ref{prop:concept_inc_posrep} with the observation from \cite{Robinson_relations} that the Dowker cosheaf is an isomorphism invariant for contexts, we obtain the following pleasing result.

\begin{theorem}
  \label{thm:concept_dowker}
  The concept lattice is an invariant of the Dowker cosheaf of a context.  If two Dowker cosheaves for two contexts are isomorphic, then their concept lattices will be lattice isomorphic.  Conversely, the Dowker cosheaf itself can be recovered from a concept lattice.
  This establishes that the functor $\cat{ConLat} \to \cat{CoShvAsc}$ is injective on objects.
\end{theorem}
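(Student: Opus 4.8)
The plan is to prove the two asserted directions separately and then assemble them into the injectivity-on-objects claim. First I would make explicit the functor $F : \cat{ConLat} \to \cat{CoShvAsc}$ whose behavior the theorem describes. This comes for free by composing the inverse $\setsys{B}^{-1} : \cat{ConLat} \to \cat{Ctx}$ of the category isomorphism from Corollary \ref{cor:context_concept}, the inclusion $\cat{Ctx} \hookrightarrow \cat{Rel}$ justified by Proposition \ref{prop:ctx_sub_rel}, and the functor $CoShvRep : \cat{Rel} \to \cat{CoShvAsc}$ of \cite{Robinson_relations}; on objects $F$ sends a concept lattice to the Dowker cosheaf $\cshf{R}$ of its underlying context.

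For the first direction---that isomorphic Dowker cosheaves force isomorphic concept lattices---the workhorse is Proposition \ref{prop:dowker_concepts_1}, which recovers the lattice $(\setsys{B}_M,\sub)$ purely from the cosheaf as the poset of its \emph{unique} costalks under inclusion. I would argue that any isomorphism $\cshf{R}_1 \cong \cshf{R}_2$ in $\cat{CoShvAsc}$ carries costalks to costalks bijectively and respects the extension maps, which in this construction are the inclusions $\tau' \hookrightarrow \sigma'$; it therefore descends to an order isomorphism of the two posets of unique costalks, i.e.\ a lattice isomorphism $\setsys{B}_M^{(1)} \cong \setsys{B}_M^{(2)}$. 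Since the convention in \equ{conorder} gives $B_1 \supseteq B_2$ whenever $K_1 \le K_2$, the concept lattice is the order dual of $(\setsys{B}_M,\sub)$, and passing to order duals preserves lattice isomorphism, so the two concept lattices are isomorphic.

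For the converse---recovering the cosheaf from a concept lattice---I would invoke the fundamental theorem directly. An object of $\cat{ConLat}$ is the labeled tuple $(G,M,L,\gamma,\mu)$, and Lemma \ref{lem:fund_2} reconstructs the incidence via $gIm \iff \gamma(g) \le \mu(m)$, hence the context $(G,M,I)$. Applying Definition \ref{def:dowker_cosheaf} (that is, $CoShvRep$) to this context deterministically rebuilds every costalk $\cshf{R}(\sigma)=D(\sigma',\sigma,(I^T|_{\sigma,\sigma'}))$ and every extension map, so the Dowker cosheaf is recovered up to isomorphism. Assembling the pieces: by Corollary \ref{cor:context_concept} objects of $\cat{ConLat}$ correspond bijectively to contexts, the converse direction shows $F$ is well defined on objects, and the first direction shows $F(L_1)\cong F(L_2)$ implies $L_1 \cong L_2$; together these yield that $F$ is injective on isomorphism classes of objects.

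The step I expect to be the main obstacle is the careful part of the first direction: verifying that a $\cat{CoShvAsc}$-isomorphism genuinely restricts to an order isomorphism on the \emph{unique} costalks. The subtlety is that many distinct base simplices $\sigma$ may share a costalk---precisely when their intents $\sigma'$ coincide---so one must confirm that the cosheaf isomorphism respects this identification and that the inclusion order among the distinct costalks, read off from which extension maps are proper versus isomorphisms, is preserved. This is exactly where the appeal to Proposition \ref{prop:dowker_concepts_1}, together with the observation from \cite{Robinson_relations} that the Dowker cosheaf is an isomorphism invariant for total contexts, carries the real weight.
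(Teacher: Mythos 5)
Your proposal is correct and follows essentially the same route as the paper: the paper likewise builds the functor $\cat{ConLat} \to \cat{CoShvAsc}$ through the isomorphism $\cat{ConLat} \cong \cat{Ctx}$ (Corollary \ref{cor:context_concept}) followed by $CoShvRep$, recovers $\setsys{B}_M$ (hence the concept lattice, up to order duality) from the unique costalks via Proposition \ref{prop:dowker_concepts_1}, and recovers the cosheaf from the labeled data $(G,M,L,\gamma,\mu)$ via Lemma \ref{lem:fund_2}. The subtlety you flag---that a $\cat{CoShvAsc}$-isomorphism must respect the identification of coincident costalks and their inclusion order---is left equally implicit in the paper, which simply combines its Propositions with the isomorphism-invariance observation cited from \cite{Robinson_relations}.
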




\section{(Co)homology of Dowker (Co)sheaves}
\label{sec:dowker_homology}

This section characterizes the cohomology of the sheaf that is dual to the Dowker cosheaf $\cshf{R}$.
In section \ref{sec:dowker_chain_complex}, we proceed to derive a stronger context invariant using a cosheaf of chain complexes derived from $\cshf{R}$.

In this section, we will work with chain complexes of $\mathbb{R}$-vector spaces via formal sums.
Although this greatly simplifies matters,
it is not obvious whether the results generalize to chain complexes with other types of coefficients.

The reader is reminded that chain complexes have their own category.

\begin{definition}\cite{weibel}
\label{def:kom}
A \textbf{chain complex} consists of an indexed collection of vector spaces $C_k$ and linear maps $\partial_k : C_k \to C_{k-1}$ such that $\partial_{k-1} \circ \partial_k = 0$ for all integer $k$.  

A \textbf{chain map} between one chain complex $(C_\bullet,\partial_\bullet)$ to another $(C'_\bullet,\partial'_\bullet)$ consists of an indexed collection of maps $m_k: C_k \to C'_k$ such that the diagram
\begin{equation*}
\xymatrix{
\dotsb \ar[r] & C_k \ar[r]^{\partial_k} \ar[d]^{m_k} & C_{k-1} \ar[r] \ar[d]^{m_{k-1}} & \dotsb \\
\dotsb \ar[r] & C'_k \ar[r]_{\partial'_k} & C'_{k-1} \ar[r] & \dotsb
}
\end{equation*}
commutes.

The category $\cat{Kom}$ has chain complexes for its objects and chain maps for its morphisms.
\end{definition}

Separately, it is useful to have notation that generalizes intent of a concept to handle sets which are not actually concepts.

\begin{definition}
  \label{def:M_sig}
  Suppose that $(G,M,I)$ is a context, and $\sigma \subseteq G$ be an arbitrary simplex of $D(G,M,I)$.
  (Recall that this means that there is an $m \in M$ such that if $g \in \sigma$, we have $gIm$.)
  Let
  \begin{equation*}
    \widehat{M_{\sigma}} := \{m\in M : gIm\text{ if and only if }g\in\sigma\}.
  \end{equation*}
\end{definition}

Notice the difference between $\widehat{M_{\sigma}}$ and
\begin{equation*}
    \sigma' = \{m \in M : gIm \text{ for all }g\in\sigma\}.
\end{equation*}
It follows that $\widehat{M_{\sigma}} \subseteq \sigma'$, but not necessarily conversely.

\begin{corollary}
  \label{cor:M_sig_hat}
  Every element of $M$ is an element of precisely one element in $\{\widehat{M_\sigma}\}_\sigma$.
\end{corollary}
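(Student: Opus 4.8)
The plan is to recognize that the sets $\widehat{M_\sigma}$ are nothing more than the fibers of the single-attribute extent map $m \mapsto \{m\}'$, so that the claimed partition is immediate once this reformulation is in hand. The first step I would take is to observe that for a fixed $m \in M$, the defining condition $m \in \widehat{M_\sigma}$ — that $gIm$ holds if and only if $g \in \sigma$ — says precisely that $\{m\}' = \sigma$, where $\{m\}'$ denotes the extent of the singleton $\{m\} \sub M$ under the polar map of \eqref{eq:prime_def}. Thus membership of $m$ in \emph{any} $\widehat{M_\sigma}$ forces $\sigma$ to equal $\{m\}'$, and the whole corollary reduces to the triviality that $\{m\}'$ is a well-defined single set.

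For the existence half (each $m$ lies in \emph{at least} one member of the family), I would set $\sigma_m := \{m\}'$ and check directly from the definition that $m \in \widehat{M_{\sigma_m}}$. I would then confirm that $\sigma_m$ is a legitimate index, i.e.\ an actual simplex of $D(G,M,I)$. Recall that $\sigma$ is a simplex exactly when $\sigma' \neq \emptyset$. Here $(\sigma_m)' = \{m\}''$, and since $gIm$ for every $g \in \{m\}'$ we have $m \in \{m\}''$, so $(\sigma_m)' \neq \emptyset$ and $\sigma_m$ indeed indexes a member of $\{\widehat{M_\sigma}\}_\sigma$.

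For uniqueness (each $m$ lies in \emph{at most} one member), suppose $m \in \widehat{M_\sigma}$ and $m \in \widehat{M_\tau}$. By the reformulation in the first paragraph, $\sigma = \{m\}' = \tau$, hence $\sigma = \tau$; combined with existence this gives exactly one. There is no real obstacle here, so the ``hard part'' is only a bookkeeping subtlety: the degenerate case of an attribute $m$ whose extent $\{m\}'$ is empty. One must ensure the indexing set of simplices admits $\emptyset$ (equivalently, that $\emptyset' = M \neq \emptyset$ counts $\emptyset$ as a simplex) so that such $m$ still have a home; for a \textbf{total context}, which has no zero columns, this case never arises and the partition is clean. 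This corollary is then exactly what licenses treating $\{\widehat{M_\sigma}\}_\sigma$ as a disjoint decomposition of $M$ when assembling the cosheaf of chain complexes in the sequel.
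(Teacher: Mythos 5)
Your proof is correct and is precisely the argument the paper leaves implicit: the corollary is stated there without proof, the intended justification being exactly your observation that $m \in \widehat{M_\sigma}$ if and only if $\sigma = \{m\}'$, so the sets $\widehat{M_\sigma}$ are the fibers of the map $m \mapsto \{m\}'$. Your flagging of the degenerate case $\{m\}' = \emptyset$ (which genuinely requires either totality of the context or a convention admitting the empty simplex, consistent with the paper's later restriction to $\cat{Ctx}_+$) is a careful addition rather than a departure.
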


\begin{corollary}
  If $(A,B)$ is a concept of $(G,M,I)$ then $\widehat{M_A} = B$.
\end{corollary}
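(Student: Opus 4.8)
The plan is to prove the set equality $\widehat{M_A} = B$ by establishing the two inclusions separately, drawing on the defining conditions $A' = B$ and $B' = A$ of a concept together with Definition \ref{def:M_sig} and Corollary \ref{cor:M_sig_hat}. Throughout I will work with the singleton extent $\{m\}' = \{g \in G : gIm\}$, so that by Definition \ref{def:M_sig} the condition $m \in \widehat{M_A}$ is exactly $\{m\}' = A$.

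The inclusion $\widehat{M_A} \subseteq B$ is the straightforward half. Suppose $m \in \widehat{M_A}$, so $gIm$ holds precisely when $g \in A$; in particular $gIm$ for every $g \in A$, which by \eqref{eq:prime_def} is exactly the statement $m \in A' = B$. This direction uses only one implication of the biconditional in Definition \ref{def:M_sig} and none of the concept structure, mirroring the already-noted containment $\widehat{M_{\sigma}} \subseteq \sigma'$.

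The reverse inclusion $B \subseteq \widehat{M_A}$ is where the real content lies, and I expect it to be the main obstacle. Fix $m \in B = A'$; then $gIm$ for all $g \in A$, so $A \subseteq \{m\}'$. To conclude $m \in \widehat{M_A}$ I must upgrade this to $\{m\}' = A$, i.e.\ show $\{m\}' \subseteq A$. The natural tool is the concept condition $B' = A$: for any putative $g \in \{m\}' \setminus A$, I would try to produce an attribute of $B$ to which $g$ is \emph{not} related, using the closure equality $A = A'' = B'$ to block $g$ from belonging to $\{m\}'$. Equivalently, invoking the partition $M = \bigsqcup_{\sigma} \widehat{M_{\sigma}}$ from Corollary \ref{cor:M_sig_hat}, the intent decomposes as $B = A' = \bigsqcup_{\sigma \supseteq A} \widehat{M_{\sigma}}$ over realized extents $\sigma$, and the claim reduces to showing that only the term $\sigma = A$ contributes. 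The delicate point is that $m \in B$ alone merely forces $A \subseteq \{m\}'$, not equality, so the crux of any proof — and the step I would scrutinize most, including whether it needs an extra hypothesis such as attribute-clarification of the context — is the argument that no attribute of the intent $B$ can have an extent strictly larger than the extent $A$.
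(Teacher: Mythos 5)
Your proof of the inclusion $\widehat{M_A} \subseteq B$ is correct, and the step you flagged as the crux of the reverse inclusion does indeed fail --- not for lack of ingenuity, but because the statement is false as written. The tool you proposed cannot work: for $g \in \{m\}' \setminus A$, the concept condition $B' = A$ only guarantees \emph{some} attribute of $B$ not related to $g$, which blocks $g$ from lying in the intersection $\bigcap_{m' \in B} \{m'\}'$, but does nothing to expel $g$ from the single extent $\{m\}'$. The paper's own running example refutes the corollary: take the concept $\gal{b}{345}$, i.e.\ $A = \{b\}$, $B = \{3,4,5\}$. Then $\{3\}' = \{b\}$, but $\{4\}' = \{b,c\}$ and $\{5\}' = \{a,b,c\}$, so $\widehat{M_A} = \{3\} \subsetneq B$. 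Exactly as you feared, attributes in the intent can have extents strictly larger than $A$, and typically do. Your partition identity $B = A' = \bigsqcup_{\sigma \supseteq A} \widehat{M_\sigma}$ is the correct general statement; the corollary amounts to claiming that only the term $\sigma = A$ is inhabited, which fails here. Note also that the extra hypothesis you floated does not rescue equality: the columns $3,4,5$ are pairwise distinct, so this counterexample already lives in an attribute-clarified part of the context.

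There is no argument in the paper to compare against --- the corollary is asserted without proof, and as stated it is an error in the paper, not a gap in your proposal. The statement should either be weakened to the containment $\widehat{M_A} \subseteq B$ (which you proved) or replaced by the partition statement above. Fortunately, the later results that invoke $\widehat{M_\sigma}$ (Corollary \ref{cor:M_sig_hat}, Proposition \ref{prop:r0_h}, and Lemma \ref{lem:r0_h0_splitting}) use only the fact that the sets $\widehat{M_\sigma}$ partition $M$, never the asserted equality $\widehat{M_A} = B$, so the paper's main development is unaffected.
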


\subsection{Dowker sheaf cohomology}

Because it has somewhat easier algebraic structure, it is a bit easier to start by computing the cohomology of the Dowker sheaf for the context instead of the homology of the Dowker cosheaf.  We return to the cosheaf in Section \ref{sec:dowker_chain_complex}.

\begin{definition}\cite{Robinson_relations}
  The \textbf{Dowker sheaf} $\shf{S}=ShvRep(G,M,I)$ on the Dowker complex $D(G,M,I)$ for a context $(G,M,I)$ is given by specifying its stalks and restrictions.
  For each $\sigma \in D(G,M,I)$, the stalk on $\sigma$ is
  \begin{equation*}
    \shf{S}(\sigma) := \vspan \sigma',
  \end{equation*}
  where we interpret columns of $I$ as basis elements and $\vspan S$ constructs the abstract vector space spanned by a set $S$.

  Notice that basis elements in each stalk can be compared between stalks since they correspond to elements of $M$.
  Moreover, if $\sigma \subseteq \tau$ then $\tau' \subseteq \sigma'$.
  We therefore define the restriction maps $\shf{S}(\sigma \subseteq \tau)$ between simplices to be given by vector space projections using the correspondence between identical columns appearing in the costalks of $\sigma$ and $\tau$.
\end{definition}

The main result of this section, Proposition \ref{prop:r0_h} below,
is that $\shf{S}$ has a simple cohomology when $(G,M,I)$ is a \textbf{total} context,
namely one in which every $g \in G$ is $I$-related to at least one $m \in M$,
and \emph{vice versa}.
The subcategory of $\cat{Ctx}$ consisting of total contexts is denoted $\cat{Ctx}_+$.

\begin{proposition}
  \label{prop:r0_h}
  Consider the Dowker sheaf $\shf{S}$ on the Dowker complex $D(G,M,I)$ constructed as the sheaf representation for a total context $(G,M,I) \in \cat{Ctx}_+$.
  The space of global sections of $\shf{S}$ is spanned by the elements of $M$ and decomposes according to the simplices of $D(G,M,I)$,
  \begin{equation*}
    H^0(\shf{S}) \cong \smashoperator{\bigoplus_{\sigma \in D(G,M,I)}} \vspan \{\widehat{M_\sigma}\}.
  \end{equation*}
  Moreover, $H^k(\shf{S})=0$ for all $k>0$.
\end{proposition}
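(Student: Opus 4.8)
The plan is to compute the cohomology of the Dowker sheaf $\shf{S}$ directly from its description, exploiting the fact that the costalks $\sigma' \subseteq M$ behave compatibly with the face-poset structure of $D(G,M,I)$. The key structural observation is Corollary \ref{cor:M_sig_hat}: every $m \in M$ belongs to exactly one $\widehat{M_\tau}$, where $\tau$ is the (necessarily unique, since the context is total) maximal simplex on which $m$ is supported, namely $\tau = \{m\}'$. This partition of $M$ into the blocks $\{\widehat{M_\tau}\}$ should let me decompose the entire cochain complex of $\shf{S}$ as a direct sum indexed by these blocks, reducing the computation to a collection of independent, and individually trivial, subcomplexes.

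First I would write out the Dowker \emph{cochain} complex explicitly: in degree $k$, the cochains are $\bigoplus_{\dim \sigma = k} \shf{S}(\sigma) = \bigoplus_{\dim\sigma=k} \vspan \sigma'$, with coboundary built from the signed alternating sum of the restriction (projection) maps $\shf{S}(\sigma \subseteq \tau)$ along codimension-one face inclusions. Because each basis element $m \in M$ lives in stalk $\shf{S}(\sigma)$ precisely when $m \in \sigma'$, i.e.\ when $\sigma \subseteq \{m\}'$, the ``$m$-component'' of the cochain complex is exactly the simplicial cochain complex (with $\reals$ coefficients) of the full simplex $[\{m\}']$ on the vertex set $\{m\}'$. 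Since the restriction maps are the identity on a shared basis element $m$ and zero otherwise, the whole complex splits as $\bigoplus_{m \in M}$ of these per-simplex complexes, and I can group the summands by which block $\widehat{M_\tau}$ contains $m$.

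Next I would invoke the standard fact that the reduced simplicial cohomology of a single (nonempty) full simplex $[\{m\}']$ vanishes in all degrees, while its \emph{unreduced} $H^0$ is one-dimensional, generated by the constant cochain. This immediately gives $H^k = 0$ for $k > 0$ for every block, hence $H^k(\shf{S}) = 0$ for $k>0$. For degree zero, each $m$ contributes a one-dimensional global section; collecting the $m$'s according to Corollary \ref{cor:M_sig_hat} and grouping the resulting one-dimensional spaces by the simplex $\sigma$ for which $m \in \widehat{M_\sigma}$ yields exactly $H^0(\shf{S}) \cong \bigoplus_{\sigma \in D(G,M,I)} \vspan\{\widehat{M_\sigma}\}$, matching the claim. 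The totality hypothesis $(G,M,I)\in\cat{Ctx}_+$ is what guarantees $\{m\}'$ is a nonempty simplex of $D(G,M,I)$ for every $m$, so no basis element is lost and every simplex genuinely appears.

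\emph{The main obstacle} I anticipate is making the decomposition across simplices bookkeeping-rigorous: I must verify that the sheaf coboundary respects the per-$m$ splitting (which hinges on the restriction maps being basis-compatible projections, as stated in the definition of $\shf{S}$), and that the indexing simplices over which a fixed $m$ survives are precisely the faces of the full simplex on $\{m\}'$, so that each block is genuinely the acyclic cochain complex of a contractible simplex rather than something with nontrivial topology. A secondary subtlety is confirming that the $H^0$ generators reassemble into the $\widehat{M_\sigma}$ grading cleanly, i.e.\ that the global section attached to $m$ is naturally recorded at the unique simplex $\sigma$ with $m \in \widehat{M_\sigma}$; this is where Corollary \ref{cor:M_sig_hat} does the essential work.
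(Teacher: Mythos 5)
Your proof is correct, and it takes a genuinely different route from the paper's for the vanishing part. The paper splits the work: it computes $H^0$ by analyzing the kernel of $d^0$ directly (the coboundary identifies the duplicate copies of each $m$ across stalks, so $\ker d^0$ has dimension $\#M$; this is Lemma \ref{lem:r0_h0}), regroups the generators via Corollary \ref{cor:M_sig_hat} (Lemma \ref{lem:r0_h0_splitting}), and then proves $H^k(\shf{S})=0$ for $k>0$ by an entirely separate sheaf-theoretic argument: it checks that every restriction map from global sections to an arbitrary Alexandrov open is surjective, so $\shf{S}$ is flabby and hence acyclic (Lemmas \ref{lem:flabby} and \ref{lem:r0_hk}). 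You instead decompose the whole cochain complex as $\bigoplus_{m\in M}$ of the unreduced simplicial cochain complexes of the full simplices $[\{m\}']$ --- legitimate because the restriction maps are basis-compatible projections that never mix distinct attributes, and because every nonempty subset of $\{m\}'$ is a simplex of $D(G,M,I)$ witnessed by $m$ itself --- and then read off all degrees at once from contractibility of a simplex. Your route is more elementary and unified (no flasque machinery, and the $H^0$ generators visibly correspond to columns of $I$, as in the paper's worked example), essentially exhibiting $\shf{S}$ as a direct sum of constant sheaves supported on closed subcomplexes; the paper's route buys a stronger structural fact, flabbiness of $\shf{S}$, which is reusable beyond this one computation (it is echoed again in Lemma \ref{lem:chain_homology}). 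Note that both arguments tacitly rely on Lemma \ref{lem:cech} to know that the cellular cochain complex computes the sheaf cohomology, so you should cite it when making your coboundary description rigorous; with that, your bookkeeping concerns resolve exactly as you anticipate and there is no gap.
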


We prove Proposition \ref{prop:r0_h} step-by-step through a sequence of Lemmas.

\begin{lemma}
  \label{lem:cech}
  Suppose $\shf{F}$ is a sheaf of vector spaces on a cell complex $X$,
  and that $\shf{F}'$ is a sheaf of vector spaces on the Alexandrov space generated by the face poset of $X$.
  Assume further that $\shf{F}$ and $\shf{F}'$ agree on every cell and have the same restriction maps between cells.
  Then the cellular sheaf cohomology of $\shf{F}$ is isomorphic to the usual (\v{C}ech) sheaf cohomology of $\shf{F}'$.  
\end{lemma}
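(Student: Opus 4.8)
The plan is to exhibit an open cover of the Alexandrov space that is simultaneously a Leray (good) cover for $\shf{F}'$ and whose \v{C}ech complex is literally the cellular cochain complex of $\shf{F}$. Write $A$ for the Alexandrov space, so its open sets are the up-sets of the face poset of $X$, and for each cell $\sigma$ let $U_\sigma = \{\tau : \sigma \sub \tau\}$ be the smallest open set containing $\sigma$ (its open star). Because $U_\sigma$ is the minimal open neighborhood of $\sigma$, the stalk of any sheaf at $\sigma$ is its value on $U_\sigma$; in particular the sections functor $\Gamma(U_\sigma, -)$ coincides with the stalk functor at $\sigma$, which is exact. Hence $H^{k}(U_\sigma, \shf{F}') = 0$ for all $k > 0$, while $H^0(U_\sigma, \shf{F}') = \shf{F}'(U_\sigma) = \shf{F}(\sigma)$, the last equality using the hypothesis that $\shf{F}$ and $\shf{F}'$ agree on cells. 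Thus every minimal open set is $\shf{F}'$-acyclic.

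First I would take the cover $\mathcal{U} = \{U_v\}$ indexed by the vertices of $X$ and check it is good. Using that $X$ is simplicial (as the Dowker complexes to which this lemma is applied always are), a finite intersection $U_{v_0} \cap \cdots \cap U_{v_k}$ is the set of cells containing all of $v_0, \dots, v_k$; it is nonempty exactly when $\{v_0, \dots, v_k\}$ spans a simplex $\sigma$, in which case it equals the minimal open set $U_\sigma$. By the acyclicity just established, every nonempty finite intersection is $\shf{F}'$-acyclic, so $\mathcal{U}$ is a Leray cover and Leray's theorem gives $\check{H}^*(\mathcal{U}, \shf{F}') \cong H^*(A, \shf{F}')$, the ordinary (\v{C}ech) sheaf cohomology of $\shf{F}'$.

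Next I would identify the \v{C}ech complex $\check{C}^\bullet(\mathcal{U}, \shf{F}')$ with the cellular cochain complex $C^\bullet(\shf{F})$. A \v{C}ech $k$-cochain assigns to each $(k+1)$-fold intersection $U_{v_0} \cap \cdots \cap U_{v_k}$, i.e.\ to each $k$-simplex $\sigma = \{v_0, \dots, v_k\}$, an element of $\shf{F}'(U_\sigma) = \shf{F}(\sigma)$, so $\check{C}^k(\mathcal{U}, \shf{F}') = \prod_{\dim \sigma = k} \shf{F}(\sigma) = C^k(\shf{F})$ (the complex being finite, the product is the direct sum). Under this identification the \v{C}ech differential, an alternating sum over omitted vertices of restrictions $\shf{F}'(U_{\sigma \setminus v_i}) \to \shf{F}'(U_\sigma)$, is exactly the alternating sum over codimension-one faces of the cellular restriction maps $\shf{F}(\sigma \setminus v_i) \to \shf{F}(\sigma)$, which is the cellular coboundary. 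Matching these two differentials, signs included, is the one genuinely fiddly point, but it is just the standard comparison between the simplicial \v{C}ech and cellular sign conventions. Composing the isomorphisms yields $H^*_{\mathrm{cell}}(\shf{F}) = \check{H}^*(\mathcal{U}, \shf{F}') \cong H^*(A, \shf{F}')$, as claimed.

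The main obstacle is the good-cover step: the whole argument hinges on the $\shf{F}'$-acyclicity of finite intersections of cover elements. I reduce this to the clean observation that a minimal open set $U_\sigma$ has a least element, so that its sections functor is the exact stalk functor — this is where the Alexandrov structure does all the work and where the shared cell data of $\shf{F}$ and $\shf{F}'$ enters. The simplicial hypothesis is what guarantees that intersections of vertex stars are again minimal open sets; for a general regular cell complex one would instead pass to the order complex (the barycentric subdivision), where the star cover is again Leray, at the cost of invoking subdivision invariance of cellular cohomology.
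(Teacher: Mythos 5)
Your proposal is correct and is essentially the paper's own argument: the paper's proof is a one-line appeal to the \v{C}ech ``good cover'' theorem, and you have simply filled in the details of exactly that application --- acyclicity of minimal open sets (vertex stars and their intersections) in the Alexandrov topology, Leray's theorem, and the identification of the resulting \v{C}ech complex with the cellular cochain complex. Your closing caveat that the star-cover argument as written needs $X$ simplicial (with barycentric subdivision handling general regular cell complexes) is a fair and accurate refinement of what the paper leaves implicit, since the lemma is only ever applied to Dowker complexes.
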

\begin{proof}
  This is an easy application of the \v{C}ech ``good cover'' theorem for sheaves \cite{Bredon,Hubbard_2006}.  
\end{proof}

As a consequence of Lemma \ref{lem:cech}, we may use simplicial (cellular) sheaf cohomology to compute the sheaf cohomology of the Dowker sheaf.

\begin{lemma}
  \label{lem:r0_h0}
  Consider the Dowker sheaf $\shf{S}$ on the Dowker complex $D(G,M,I)$ for the context $(G,M,I) \in \cat{Ctx}_+$.
  The zeroth cohomology $H^0(\shf{S})$ is of dimension equal to the number of attributes $\#M$ of the context.
\end{lemma}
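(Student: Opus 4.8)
The plan is to identify $H^0(\shf{S})$ with the space of global sections of $\shf{S}$, namely families $(s_\sigma)_\sigma$ with $s_\sigma \in \shf{S}(\sigma) = \vspan \sigma'$ that are compatible with the restriction maps; by Lemma \ref{lem:cech} the cellular sheaf cohomology agrees with \v{C}ech cohomology, so computing $H^0$ cellularly is legitimate, and $H^0$ is exactly the global sections. The one piece of bookkeeping to keep straight is the restriction map: for $\sigma \sub \tau$ we have $\tau' \sub \sigma'$, and $\shf{S}(\sigma \sub \tau) : \vspan \sigma' \to \vspan \tau'$ is the projection that fixes each basis attribute lying in $\tau'$ and sends each attribute in $\sigma' \setminus \tau'$ to $0$.

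First I would exhibit $\#M$ global sections, one per attribute. For $m \in M$ define $s^{(m)}$ by setting $s^{(m)}_\sigma = m$ (the corresponding basis vector) when $m \in \sigma'$, equivalently when $\sigma \sub m'$, and $s^{(m)}_\sigma = 0$ otherwise. Compatibility is a short case analysis on whether $m$ lies in $\tau'$, in $\sigma' \setminus \tau'$, or in neither: in each case the projection $\shf{S}(\sigma \sub \tau)$ carries $s^{(m)}_\sigma$ to $s^{(m)}_\tau$. Since $(G,M,I)$ is total, every $m$ has $m' \neq \emptyset$, so $m$ appears in the stalk of at least one vertex; this makes $\{s^{(m)}\}_{m\in M}$ linearly independent, because in any stalk $\vspan \sigma'$ the sections $s^{(m)}$ with $m \in \sigma'$ restrict to the distinct basis vectors of $\sigma'$.

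The heart of the argument, and the step I expect to require the most care, is to show these sections span $H^0(\shf{S})$. Given an arbitrary global section $s$, fix $m \in M$ and consider the attribute-$m$ component $s_\sigma^{(m)} \in \reals$ of $s_\sigma$, which is defined precisely for those $\sigma$ with $\sigma \sub m'$. These are exactly the faces of the simplex $m'$, which is itself a simplex of $D(G,M,I)$ since $m \in (m')'$, and is nonempty by totality. Because each restriction map is a projection preserving any attribute still present in the target costalk, the relation $\sigma \sub m'$ forces $s_\sigma^{(m)} = s_{m'}^{(m)}$ for every such $\sigma$; that is, the $m$-component is constant across the face poset of $m'$, which has $m'$ as a top element. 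Denote this common value $c_m$.

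Finally I would assemble these constants: for every simplex $\sigma$, $s_\sigma = \sum_{m \in \sigma'} s_\sigma^{(m)}\, m = \sum_{m \in \sigma'} c_m\, m = \left(\sum_{m\in M} c_m s^{(m)}\right)_\sigma$, whence $s = \sum_{m\in M} c_m s^{(m)}$. Thus $\{s^{(m)}\}_{m\in M}$ is a basis of $H^0(\shf{S})$ and $\dim H^0(\shf{S}) = \#M$. The only genuinely delicate points are the careful handling of the projection restriction maps and the use of totality, which guarantees that each $m'$ is a nonempty simplex and thereby secures both the linear independence of the $s^{(m)}$ and the well-definedness of the constants $c_m$.
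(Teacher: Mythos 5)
Your proof is correct. It rests on the same underlying insight as the paper's --- a degree-zero class amounts to exactly one free scalar per attribute --- but the execution is genuinely different. The paper works entirely at the cochain level: it writes out $C^0(\shf{S}) = \bigoplus_{g\in G} \vspan [g]'$ and $C^1(\shf{S})$, uses the explicit coboundary formula to see that $d^0(a)=0$ forces the values at adjacent vertices to agree on shared attributes, and then counts $\dim\ker d^0$ by inclusion--exclusion, sweeping edge by edge (a step it leaves as ``repeating this process over all other edges completes the argument''). You instead work with global sections over the whole face poset: you exhibit an explicit family $\{s^{(m)}\}_{m\in M}$, verify compatibility by a case analysis on the projection maps, get independence from totality (each $m'$ is nonempty, so $s^{(m)}$ is nonzero in some vertex stalk), and prove spanning by observing that the $m$-component of any section is constant on the faces of the simplex $m'$, which serves as a top element anchoring the constant $c_m$. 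Your route buys completeness where the paper's edge sweep is sketched, and it produces an explicit natural basis --- indeed, the generators of $H^0$ listed in the paper's Example \ref{eg:dowker_r0_hk_example} are precisely your sections $s^{(m)}$, each supported on the faces of $m'$. What the paper's cochain formulation buys is that it is stated in the same language as the matrix computations of that example and as the flabbiness argument of Lemma \ref{lem:r0_hk}, so the later results can reuse it directly. Two small points of care: your identification of $H^0$ with compatible families over \emph{all} simplices (rather than $\ker d^0$ on the $1$-skeleton) is legitimate but deserves a sentence, since it is the standard equivalence for cellular sheaves; and your notation overloads $s^{(m)}$ (a basis section) against $s_\sigma^{(m)}$ (the $m$-component of an arbitrary section $s$), which is worth disentangling.
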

\begin{proof}
  The Dowker sheaf is constructed so that each stalk is given by
  \begin{equation*}
    \shf{S}(\sigma) := \bigoplus_{m \in \sigma'} \mathbb{R},
  \end{equation*}
  namely a vector space with dimension $\#\sigma'$, and whose basis can be taken to be $\sigma'$.
  Each restriction map $\shf{S}(\sigma \subseteq \tau)$ is simply the projection that is dual to the linear transformation induced by the inclusion $\tau' \to \sigma'$,
  which acts on the basis elements.
  That simply means that if we reinterpret $m \in \sigma'$ as a basis element of $\shf{S}(\sigma)$, then
  \begin{equation*}
    \left(\shf{S}(\sigma \subseteq \tau) \right)(m) = \begin{cases}
      m & \text{if }m \in \tau'\\
      0 &\text{otherwise.}
      \end{cases}
  \end{equation*}
  
  According to the simplicial sheaf cochain complex, 
  \begin{equation*}
    C^0(\shf{S}) = \bigoplus_{g\in G} \shf{S}([g]) = \bigoplus_{g\in G} \smashoperator[r]{\bigoplus_{m\in [g]'}} \mathbb{R},
  \end{equation*}
  and
  \begin{equation*}
    C^1(\shf{S}) = \bigoplus_{g_1\in G}\bigoplus_{g_2\in G} \shf{S}([g_1,g_2]) = \bigoplus_{g_1\in G} \bigoplus_{g_2\in G} \smashoperator[r]{\bigoplus_{m\in [g_1,g_2]'}} \mathbb{R}.
  \end{equation*}
  That is, the cochain space for the vertex $[g_1]$ contains a copy of $\mathbb{R}$ for each element of $[g_2]'$,
  and likewise for every edge $[g_1,g_2]$.
  Essentially, $C^0(\shf{S})$ carries at least one copy of each element of $M$ (assuming the context is an element of $\cat{Ctx}_+$),
  and the coboundary map $d^0 : C^0(\shf{S}) \to C^1(\shf{S})$ identifies duplicates.
  The kernel of $d^0$ is therefore indexed by the same set of elements of $Y$,
  but with the duplicates removed.
  To see this, recall that for $a \in C^0(\shf{S})$,
  $d^0(a)$ is given by the classic formula
  \begin{equation*}
    (d^0 (a))[g_1,g_2] = \shf{S}([g_2]\subseteq[g_1,g_2])(a([g_2])) - \shf{S}([g_1]\subseteq[g_1,g_2])(a([g_1])).
  \end{equation*}
  This vanishes precisely when $a([g_1])$ and $a([g_2])$ agree on all of the elements of $[g_1]' \cap [g_2]'$,
  so the kernel of $d^0$ restricted to this subspace of $C^0(\shf{S})$ is indexed by $[g_1]' \cup [g_2]'$.
  It is of dimension
  \begin{equation*}
    \#\left([g_1]' \cup [g_2]'\right) = \#[g_1]' + \#[g_2]' - \#\left([g_1]' \cap [g_2]'\right).
  \end{equation*}
  Repeating this process over all other edges completes the argument.
\end{proof}

\begin{lemma}
  \label{lem:r0_h0_splitting}
  The space of global sections of $\shf{S}=ShvRep(G,M,I)$ decomposes naturally into a direct sum of vector spaces, indexed by the simplices of $D(G,M,I)$.
  Namely, 
  \begin{equation*}
    H^0(\shf{S}) \cong \bigoplus_{\sigma \in D(G,M,I)} \vspan \{\widehat{M_\sigma}\}.
  \end{equation*}
\end{lemma}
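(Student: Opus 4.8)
The plan is to build on Lemma \ref{lem:r0_h0}, whose proof already identifies a global section of $\shf{S}$ with a consistent assignment of a single scalar to each attribute. First I would make this identification fully explicit by constructing a linear isomorphism $\Phi : H^0(\shf{S}) \to \vspan M$. Recall that $H^0(\shf{S}) = \ker(d^0)$ sits inside $C^0(\shf{S}) = \bigoplus_{g \in G} \vspan [g]'$, so a global section $a$ is precisely a choice of vectors $a([g]) \in \vspan [g]'$ satisfying the cocycle condition on every edge. Writing $a([g]) = \sum_{m \in [g]'} c_{g,m}\, m$ in the basis of columns, the edge condition computed in Lemma \ref{lem:r0_h0} forces $c_{g_1,m} = c_{g_2,m}$ whenever $g_1 I m$ and $g_2 I m$; note such a pair automatically spans an edge, since then $m \in [g_1]' \cap [g_2]' = [g_1,g_2]'$, so $[g_1,g_2]$ is a simplex of $D(G,M,I)$. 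Hence for each attribute $m$ the coefficient $c_{g,m}$ is independent of the vertex $g$ chosen from among those with $gIm$; because the context is total this set is nonempty, so a well-defined scalar $c_m$ exists, and I would set $\Phi(a) := \sum_{m \in M} c_m\, m$.

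Next I would verify that $\Phi$ is an isomorphism. Injectivity is immediate, since the vertex values $a([g]) = \sum_{m \in [g]'} c_m\, m$ are recovered from the $c_m$ and an element of $H^0$ is nothing more than its tuple of vertex values. For surjectivity, given any $\sum_m c_m\, m \in \vspan M$, I would define $a([g]) := \sum_{m \in [g]'} c_m\, m$; the only constraints imposed by $d^0$ are the equalities $c_{g_1,m}=c_{g_2,m}$, which now hold by construction, so $a \in \ker d^0$. This reproves $\dim H^0(\shf{S}) = \#M$ and upgrades it to a canonical identification $H^0(\shf{S}) \cong \vspan M$.

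Finally I would reindex the basis $M$ by simplices. Each attribute $m$ determines the simplex $\sigma_m := \{m\}' = \{g \in G : gIm\}$, which lies in $D(G,M,I)$ because $m \in (\sigma_m)'$ shows $(\sigma_m)'$ is nonempty, and by the definition of $\widehat{M_\sigma}$ one has $m \in \widehat{M_{\sigma_m}}$. Corollary \ref{cor:M_sig_hat} guarantees that the family $\{\widehat{M_\sigma}\}_{\sigma \in D(G,M,I)}$ partitions $M$, so the basis splits as
\[ \vspan M \;=\; \bigoplus_{\sigma \in D(G,M,I)} \vspan\{\widehat{M_\sigma}\}, \]
with any simplex having $\widehat{M_\sigma} = \emptyset$ contributing the zero space. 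Composing this with $\Phi$ yields the claimed decomposition, and it is natural precisely because the assignment $m \mapsto \sigma_m$ is canonical, determined only by the incidence relation $I$.

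The main obstacle, such as it is, lies not in the combinatorial reindexing (which Corollary \ref{cor:M_sig_hat} dispatches cleanly) but in pinning down well-definedness of $c_m$: one must check that the consistency condition genuinely links \emph{all} vertices sharing a given attribute into a single value, rather than only chained pairs. This is where the remark that any two such vertices co-occur in an edge of $D(G,M,I)$ is essential, since it makes each constraint $c_{g_1,m}=c_{g_2,m}$ hold directly. Totality of the context is exactly what prevents an attribute from being orphaned with no host vertex, and thus ensures every summand $\vspan\{\widehat{M_\sigma}\}$ is correctly accounted for.
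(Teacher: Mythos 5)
Your proposal is correct and follows essentially the same route as the paper's proof: identify $H^0(\shf{S})$ with $\vspan M$ via Lemma \ref{lem:r0_h0}, then reindex the basis using the partition of $M$ given by Corollary \ref{cor:M_sig_hat}. The paper states this in two sentences; you simply make the isomorphism $\Phi$ and the well-definedness of the coefficients $c_m$ explicit, which is a useful elaboration but not a different argument.
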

We note in passing that the cardinality of $\widehat{M_\sigma}$ is called the ``differential weight'' in \cite{Robinson_relations}.
\begin{proof}
  Lemma \ref{lem:r0_h0} asserts that if $(G,M,I) \in \cat{Ctx}_+$,
  then the space of global sections of $\shf{S}$ is spanned by the elements of $M$.
  Corollary \ref{cor:M_sig_hat} asserts that each element of $M$ identifies exactly one nonempty simplex of $D(G,M,I)$, so the result follows.
\end{proof}

The algebraic structure of the cochain complex of $\shf{S}$ is rather limited,
in that it is \textbf{acyclic}: $H^k(\shf{S}) = 0$ for $k>0$.
Acyclicity suggests that there is a more complete algebraic representation of the data in $(G,M,I)$ than the cochain complex,
a fact which is borne out in Section \ref{sec:dowker_chain_complex}.

\begin{lemma} (standard, for instance, \cite[Thm. II.3.5]{Iversen})
  \label{lem:flabby}
  Suppose that $\shf{F}$ is a sheaf of vector spaces on a topological space and
  that every restriction map from the space of global sections $\shf{F}(U \subseteq X):\shf{F}(X) \to \shf{F}(U)$ is surjective.
  Then $\shf{F}$ is acyclic; the cohomology of $\shf{F}$ satisfies $H^k(\shf{F}) = 0$ for all $k>0$.
\end{lemma}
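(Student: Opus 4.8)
The plan is to recognize the hypothesis as exactly the definition of a \emph{flasque} (flabby) sheaf and then run the standard dimension-shifting argument of homological algebra, which is the content of the cited \cite{Iversen}. First I would record the elementary observation that the stated condition already gives the full flasque property: if $\shf{F}(X) \then \shf{F}(U)$ is surjective for every open $U$, then for any nested pair $U \sub V$ the composite $\shf{F}(X) \then \shf{F}(V) \then \shf{F}(U)$ is surjective, so each intermediate restriction $\shf{F}(V) \then \shf{F}(U)$ is surjective as well. This ``restriction-between-opens'' form is what the argument below actually uses.

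The argument rests on three auxiliary facts. (i) Every sheaf of $\reals$-vector spaces embeds into a flasque sheaf; one may take Godement's sheaf of (not necessarily continuous) sections, or simply an injective object, since the category of sheaves of $\reals$-vector spaces has enough injectives and injective sheaves are flasque. (ii) If $0 \then \shf{F}' \then \shf{F} \then \shf{G} \then 0$ is a short exact sequence of sheaves with $\shf{F}'$ flasque, then the induced sequence of global sections $0 \then \shf{F}'(X) \then \shf{F}(X) \then \shf{G}(X) \then 0$ is still exact. (iii) In such a sequence, if both $\shf{F}'$ and $\shf{F}$ are flasque, then $\shf{G}$ is flasque too.

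With these in hand I would argue by induction on $k$. Embed the flasque sheaf $\shf{F}$ into an injective (hence flasque) sheaf $\shf{I}$, giving $0 \then \shf{F} \then \shf{I} \then \shf{Q} \then 0$; fact (iii) makes $\shf{Q}$ flasque. The associated long exact sequence in cohomology, together with $H^k(\shf{I}) = 0$ for $k > 0$, yields $H^{k+1}(\shf{F}) \cong H^k(\shf{Q})$ for every $k \ge 1$. For the base case, the same long exact sequence gives $H^1(\shf{F}) \cong \mathrm{coker}(H^0(\shf{I}) \then H^0(\shf{Q}))$, and fact (ii) says that map is surjective, so $H^1(\shf{F}) = 0$. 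Since $\shf{Q}$ is again flasque, the isomorphism $H^{k+1}(\shf{F}) \cong H^k(\shf{Q})$ lets the inductive hypothesis propagate the vanishing upward, giving $H^k(\shf{F}) = 0$ for all $k > 0$.

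The hard part will be fact (ii): surjectivity of global sections when the kernel is flasque. I would prove it by a Zorn's-lemma argument. Given $s \in \shf{G}(X)$, consider the partially ordered set of lifts $(U, t)$ with $t \in \shf{F}(U)$ mapping to $s|_U$, ordered by extension; stalkwise surjectivity of $\shf{F} \then \shf{G}$ supplies local lifts and hence nonempty chains with upper bounds, so a maximal element $(U, t)$ exists. If $U \neq X$, pick a point outside $U$ and a lift $t'$ on a neighborhood $V$; on $U \cap V$ the difference $t - t'$ is a section of $\shf{F}'$, and flasqueness of $\shf{F}'$ is precisely what lets one correct $t'$ so that it agrees with $t$ on the overlap, producing a strictly larger lift and contradicting maximality. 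Everything else is formal, and because the statement is classical I would in the end be content to invoke \cite{Iversen} rather than reproduce this Zorn argument in full.
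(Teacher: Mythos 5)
Your proposal is correct: the paper offers no proof of its own for this lemma, citing it as the classical flasque-acyclicity theorem (Iversen, Thm.\ II.3.5), and your argument is exactly the standard proof of that cited result --- the observation that surjectivity of $\shf{F}(X) \to \shf{F}(U)$ for all open $U$ forces surjectivity of every intermediate restriction $\shf{F}(V) \to \shf{F}(U)$, followed by the Zorn's-lemma lifting argument for exactness of global sections over a flasque kernel, and dimension shifting against an injective (hence flasque) embedding. Nothing further is needed.
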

Such a sheaf is usually called \textbf{flasque} or \textbf{flabby}. 

\begin{lemma}
  \label{lem:r0_hk}
  If $(G,M,I) \in \cat{Ctx}_+$, the Dowker sheaf $\shf{S}= ShvRep(G,M,I)$ is acylic, in that $H^k(\shf{S}) = 0$ for $k>0$.
\end{lemma}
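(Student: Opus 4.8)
The plan is to invoke the flabbiness criterion of Lemma \ref{lem:flabby}: it suffices to show that the Dowker sheaf $\shf{S}$ is flabby, i.e.\ that for every open set $U$ in the Alexandrov topology of $D(G,M,I)$ the restriction $\shf{S}(D(G,M,I)) \to \shf{S}(U)$ is surjective. Recall the open sets are exactly the upward-closed subsets of the face poset, and that each stalk splits as $\shf{S}(\sigma) = \vspan \sigma' = \bigoplus_{m \in \sigma'} \reals$ with the restriction maps acting as the coordinate projections onto $\tau' \sub \sigma'$. Consequently $\shf{S}$ decomposes as a direct sum of sheaves $\shf{S} \cong \bigoplus_{m \in M} \shf{S}_m$, where $\shf{S}_m(\sigma) = \reals$ when $m \in \sigma'$ and $\shf{S}_m(\sigma) = 0$ otherwise, and the restriction maps of $\shf{S}_m$ are identities wherever both stalks are nonzero. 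Since a direct sum of flabby sheaves is flabby and cohomology commutes with finite direct sums, I would prove the result one attribute at a time, by showing each $\shf{S}_m$ is flabby.

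The key observation is to identify the support of $\shf{S}_m$. Since $m \in \sigma'$ exactly when every $g \in \sigma$ satisfies $gIm$, we have $m \in \sigma'$ iff $\sigma \sub m'$, where $m' = \{g \in G : gIm\}$ is the extent of $m$. Because $m'$ is precisely the hyperedge $e_m$ of $HypRep(G,M,I)$, it is itself a simplex of $D(G,M,I)$, and every $\sigma \sub m'$ is one of its faces. Hence the set $D_m := \{\sigma : \shf{S}_m(\sigma) = \reals\} = \{\sigma : \sigma \sub m'\}$ is the (downward-closed) face poset of a single simplex, and it has $m'$ as its maximum element. On $D_m$ all restriction maps of $\shf{S}_m$ are identities, so $\shf{S}_m$ is, coordinatewise, the constant sheaf $\reals$ on a poset with a top element.

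Flabbiness of $\shf{S}_m$ then follows from a short extension argument. Given a section $t$ over an open (upward-closed) set $U$, consider its $\shf{S}_m$-component. If $U \cap D_m = \emptyset$ the component is trivial and extends by zero. Otherwise any $\sigma \in U \cap D_m$ satisfies $\sigma \le m'$, and since $U$ is upward closed this forces $m' \in U$; as the restriction maps up to the top $m'$ are identities, the value of $t$ at every simplex of $U \cap D_m$ equals its value $c_m$ at $m'$. I would then define the global extension to take the constant value $c_m$ on all of $D_m$, which is automatically compatible with every restriction map and restricts to $t$ on $U$. This exhibits the restriction $\shf{S}_m(D(G,M,I)) \to \shf{S}_m(U)$ as surjective, so $\shf{S}_m$---and therefore $\shf{S}$---is flabby, and Lemma \ref{lem:flabby} yields $H^k(\shf{S}) = 0$ for all $k > 0$.

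The step I expect to be the main obstacle is bookkeeping the two dual conventions at play: the face poset with its simplex inclusions versus the Alexandrov opens as upward-closed sets, and correspondingly the downward-closed support $D_m$ of each coordinate sitting inside an upward-closed $U$. The crux that makes everything collapse is the existence of the maximum $m'$ of $D_m$ (equivalently, that the support of each coordinate sheaf is a contractible simplex); verifying that $m'$ is genuinely a simplex of $D(G,M,I)$ and that any open $U$ meeting $D_m$ must contain this top element is where care is needed, though no serious computation is involved.
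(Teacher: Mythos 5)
Your proof is correct, and it rests on the same pillar as the paper's: both establish that $\shf{S}$ is flabby and then invoke Lemma \ref{lem:flabby}. Where you differ is in how flabbiness is verified. The paper does it at the level of section spaces: it computes $\shf{S}(U) \cong \vspan\bigl(\bigcup_{j} \sigma_j'\bigr)$ for an open $U$ by the identification argument of Lemma \ref{lem:r0_h0}, then cites the splitting of global sections (Lemma \ref{lem:r0_h0_splitting} and Corollary \ref{cor:M_sig_hat}) and the fact that restrictions act by projection to conclude surjectivity. You instead decompose the sheaf itself, $\shf{S} \cong \bigoplus_{m \in M} \shf{S}_m$, into attribute-indexed summands, each a constant sheaf $\reals$ supported on the (downward-closed) face poset of the single simplex $m'$, and prove each summand flabby by an explicit extension: any open set meeting the support must contain its top element $m'$, so a section's $m$-component is a single constant which extends globally. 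Your route is more self-contained (it does not lean on the earlier cohomology computations, only on the definition of $\shf{S}$) and slightly more general, since an empty $m'$ just makes $\shf{S}_m$ the zero sheaf, so totality is not actually needed for acyclicity; the paper's route is shorter because it recycles work already done for $H^0$. One small gap to patch: Lemma \ref{lem:flabby} concerns sheaf cohomology on the topological space, while the conclusion $H^k(\shf{S})=0$ is computed cellularly, so you should also cite Lemma \ref{lem:cech} to bridge the two, as the paper does.
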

\begin{proof}
  \emph{Every} restriction map of $\shf{S}$---along every inclusion of open sets, not just between two stalks---is surjective.
  This means that $\shf{S}$ is flabby, so the result will follow from Lemmas \ref{lem:cech} and \ref{lem:flabby}.  

  To establish the claim, suppose that $U$ is an open set in the Alexandrov topology of $D=D(G,M,I)$.
  We must show that $\shf{S}(U \subseteq D) : \shf{S}(D) \to \shf{S}(U)$ is surjective.
  By Lemma \ref{lem:r0_h0}, we already know $\shf{S}(D)$ is spanned by the elements of $M$.
  By definition, an open set $U$ is defined as a union of simplices and all their cofaces.
  Explicitly, there is a set of simplices $\{\sigma_j\}_{j \in J} \subseteq D(G,M,I)$ such that
  \begin{equation*}
    U = \bigcup_{j \in J} \{ \tau \in D(G,M,I) : \sigma_j \subseteq \tau \}.
  \end{equation*}
  Observe that each stalk $\shf{S}(\sigma_j) = \vspan \sigma_j'$.
  
  By the same reasoning as in the proof of Lemma \ref{lem:r0_h0}, where one identifies common elements of $M$,
  \begin{equation*}
    \shf{S}(U) \cong \vspan \left(\bigcup_{j \in J} \sigma_j'\right).
  \end{equation*}
  Namely, $\shf{S}(U)$ is spanned by all of the elements of $M$ that are involved in any coface of a simplex in $U$.
  According to Lemma \ref{lem:r0_h0_splitting},
  $\shf{S}(D)$ decomposes into a direct sum, in which each simplex $\sigma_j$ selects a factor corresponding to the simplices involved.
  Since each element of $M$ appears exactly once in $\shf{S}(D)$ by Corollary \ref{cor:M_sig_hat} and restrictions of $\shf{S}$ act by projection,
  it follows that the restriction map is surjective.
\end{proof}
\begin{example}
\label{eg:dowker_r0_hk_example}
Let us continue Example \ref{eg:running} from previous sections.  Recall that the example uses the following incidence matrix,
\begin{equation*}
  I=\begin{pmatrix}
  1& 1& 1& 0& 0& 1\\
  0& 0& 0& 1& 1& 1\\
  0 &1& 0& 0& 1 &1\\
  1 &1& 1& 0 &0 &0\\
\end{pmatrix},
\end{equation*}
in which we label the rows with objects $G = \{a,b,c,d\}$ and the columns with attributes $M=\{0,1,2,3,4,5\}$.

The resulting Dowker sheaf diagram is of the form shown in Figure \ref{fig:dowker_r0_hk_example}.

\begin{figure}[h]
    \centering
    \includegraphics[width=6in]{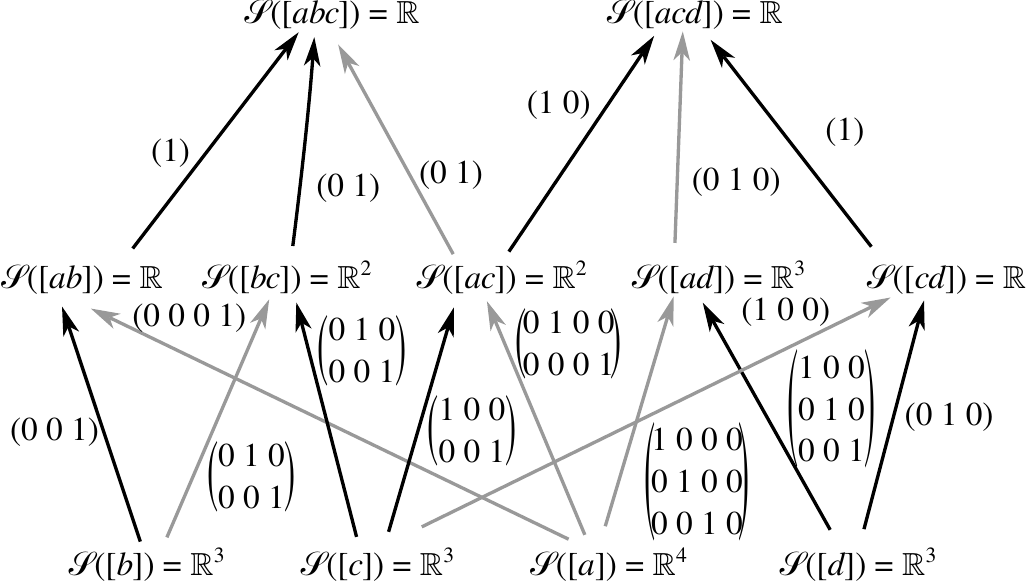}
    \caption{Dowker sheaf representation in Example \ref{eg:dowker_r0_hk_example}; shaded arrows indicate negative signs in the coboundary matrix.}
    \label{fig:dowker_r0_hk_example}
\end{figure}


The cochain complex for the Dowker sheaf representation $\shf{S} = ShvRep(G,M,I)$ is given by a pair of matrices.
In these matrices, the rows and columns are labeled with the basis elements of the stalks of $\shf{S}$.  Concretely, these basis elements correspond to elements of the attribute set $M$ in the context $(G,M,I)$.  The same attribute can appear in multiple stalks of $\shf{S}$, so some columns in the $d^k$ matrices may correspond to the same attribute.  

Given these remarks, we scan left to right across the $0$-dimensional cells Figure \ref{fig:running_cosheaf}, reading off the costalks as Galois pairs in order: 
\begin{equation*}
    b:345,\, c:145,\, a:0125,\, d:012.
\end{equation*}
With this ordering, the the columns of $d^0$ are $\{3,4,5,1,4,5,0,1,2,5,0,1,2\}$, in which the first three columns are associated with the stalk $\shf{S}([b]) = \vspan\{3,4,5\}$, the next three columns are associated with the stalk $\shf{S}([c])=\vspan\{1,4,5\}$, etc.  

On the other hand, the rows are obtained from the $1$-dimensional cells in Figure \ref{fig:running_cosheaf}, namely
\begin{equation*}
ab:5,\, bc:45,\, ac:15,\, ad:012,\, cd:1.
\end{equation*}
The rows of $d^0$ are the same as the columns of $d^1$ are therefore $\{5,4,5,1,5,0,1,2,1\}$.

Given the above setup, the cochain complex coboundary matrices are
\begin{equation*}
  d^0=\left(\begin{array} {ccc|ccc|cccc|ccc} 
0&0&1& &&& 0&0&0&-1& &&\\
\hline
0&-1&0& 0&1&0& &&&& &&\\
0&0&-1& 0&0&1& &&&& &&\\
\hline
&&& 1&0&0& 0&-1&0&0& &&\\
&&& 0&0&1& 0&0&0&-1& &&\\
\hline
&&& &&& -1&0&0&0& 1&0&0\\
&&& &&& 0&-1&0&0& 0&1&0\\
&&& &&& 0&0&-1&0& 0&0&1\\
\hline
&&& -1&0&0& &&&& 0&1&0\\
  \end{array}\right),
\end{equation*}
and
\begin{equation*}
  d^1=\left(\begin{array}{c|cc|cc|ccc|c} 
  1 & 0&1 & 0&-1 & & & &  \\
  \hline
  &  &  & 1 & 0 & 0 & -1 & 0 & 1
  \end{array}\right),
\end{equation*}
where blank blocks correspond to blocks of zeros.
It is easy to verify that $d^1 \circ d^0 = 0$.

After a straightforward, but tedious, calculation of cohomology, we obtain Betti numbers $\dim H^0 = 6$, $\dim H^k = 0$ for $k>0$ in accordance with Proposition \ref{prop:r0_h}.

The generators of $H^0$ are
\begin{equation*}
\begin{pmatrix}
1\\ 0\\ 0\\\hline 0\\ 0\\ 0\\\hline 0\\ 0\\ 0\\ 0\\\hline 0\\ 0\\ 0
\end{pmatrix},
\begin{pmatrix}
0\\ 1\\ 0\\\hline 0\\ 1\\ 0\\\hline 0\\ 0\\ 0\\ 0\\\hline 0\\ 0\\ 0
\end{pmatrix},
\begin{pmatrix}
0\\ 0\\ 1\\\hline 0\\ 0\\ 1\\\hline 0\\ 0\\ 0\\ 1\\\hline 0\\ 0\\ 0
\end{pmatrix},
\begin{pmatrix}
0\\ 0\\ 0\\\hline 0\\ 0\\ 0\\\hline 1\\ 0\\ 0\\ 0\\\hline 1\\ 0\\ 0
\end{pmatrix},
\begin{pmatrix}
0\\ 0\\ 0\\\hline 1\\ 0\\ 0\\\hline 0\\ 1\\ 0\\ 0\\\hline 0\\ 1\\ 0
\end{pmatrix},
\begin{pmatrix}
0\\ 0\\ 0\\\hline 0\\ 0\\ 0\\\hline 0\\ 0\\ 1\\ 0\\\hline 0\\ 0\\ 1
\end{pmatrix}.
\end{equation*}

The generators correspond to columns of our original incidence matrix.
They correspond to
 \begin{enumerate}
 \item  Column 3 (on vertex b),
 \item  Column 4 (on vertices b and c),
 \item  Column 5 (on vertices a, b, and c),
 \item  Column 0 (on vertices a and d),
 \item  Column 1 (on vertices a, c, and d), and
 \item  Column 2 (on vertices a and d).
 \end{enumerate}
\end{example}

\subsection{Representation as a cosheaf of chain complexes}
\label{sec:dowker_chain_complex}

Consider the covariant functor $C^\Delta : \cat{Asc} \to \cat{Kom}$ that constructs the simplicial chain complex from an abstract simplicial complex.
This functor is famous for being the point of departure for simplicial homology theory \cite[Ch. 2]{Hatcher_2002}.

Since the space of local cosections of of $\cshf{R} := CoShvRep(G,M,I)$ are themselves abstract simplicial complexes, we can apply the functor $C^\Delta$ to each space of local cosections.
In this section, we show that this yields a homological invariant for contexts, which is by extension a concept lattice isomorphism invariant as well.  Along the way, we will use one final novel category $\cat{CoShvKom}$, defined below.

\begin{definition}
    \label{def:coshvkom}
    The objects of $\cat{CoShvKom}$ consist of all cosheaves on abstract simplicial complexes of chain complexes.  That is, if $\cshf{C}$ is an object in $\cat{CoShvKom}$, its base space is an abstract simplicial complex, and each space of local cosections is a chain complex.  In short, the global cosections are objects of $\cat{Kom}$.  
    
    The morphisms of $\cat{CoShvKom}$ consist of cosheaf morphisms whose base space maps are simplicial maps, and whose fiber maps are chain maps.
\end{definition}

Although $C^\Delta$ does not preserve colimits in general, it does for the extension maps of the Dowker cosheaf representation.  This follows from a computational lemma.

\begin{lemma}
  Suppose that $W$, $X$, and $Y$ are abstract simplicial complexes such that there are injective simplicial maps
  \begin{equation*}
    \xymatrix{
      X & W \ar[r]^g \ar[l]_f & Y
      }
  \end{equation*}
  such that the simplicial colimit of this diagram is the abstract simplicial complex $Z$.  Then the colimit of the diagram of chain complexes
  \begin{equation*}
    \xymatrix{
      C^\Delta(X) & C^\Delta(W) \ar[r]^{g_\bullet} \ar[l]_{f_\bullet} & C^\Delta(Y)
      }
  \end{equation*}
  (where $f_\bullet$ and $g_\bullet$ are the induced maps) is $C^\Delta(Z)$.
\end{lemma}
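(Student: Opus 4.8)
The plan is to reduce everything to a degreewise statement about vector spaces and then invoke that the free-vector-space functor preserves colimits. Pushouts (and all colimits) in $\cat{Kom}$ of chain complexes over a field are computed degreewise, since the category is cocomplete with colimits formed levelwise; so the colimit $P_\bullet$ of $C^\Delta(X) \leftarrow C^\Delta(W) \to C^\Delta(Y)$ has, in each degree $k$, the vector-space pushout $P_k = C^\Delta_k(X) \oplus_{C^\Delta_k(W)} C^\Delta_k(Y)$, with differential uniquely induced by the universal property. First I would record that $C^\Delta_k$ factors as $\mathbb{R}[-] \circ \mathrm{Simp}_k$, where $\mathrm{Simp}_k(\cdot)$ is the set of $k$-simplices and $\mathbb{R}[-]$ is the free $\mathbb{R}$-vector-space functor. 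Because $\mathbb{R}[-]$ is a left adjoint it preserves pushouts, so $P_k \cong \mathbb{R}\big[\mathrm{Simp}_k(X) \sqcup_{\mathrm{Simp}_k(W)} \mathrm{Simp}_k(Y)\big]$.

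The heart of the argument is therefore the combinatorial claim that, in each degree, the set of $k$-simplices of the simplicial pushout $Z$ is exactly the set pushout $\mathrm{Simp}_k(X) \sqcup_{\mathrm{Simp}_k(W)} \mathrm{Simp}_k(Y)$. To see this I would first describe $Z$ explicitly: since $f$ and $g$ are injective, the vertex set $V_Z$ is the pushout $V_X \sqcup_{V_W} V_Y$ in $\cat{Set}$ with injective structure maps $i_X, i_Y$ (pushouts of monomorphisms in $\cat{Set}$ are monomorphisms), the simplices of $Z$ are precisely the images $i_X(\sigma)$ for $\sigma \in X$ and $i_Y(\rho)$ for $\rho \in Y$, and $Z$ is readily checked to be downward closed and to satisfy the universal property in $\cat{Asc}$. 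Injectivity guarantees that $i_X, i_Y$ send $k$-simplices to $k$-simplices without collapsing dimension, and that the only identifications between an $X$-simplex and a $Y$-simplex are those forced through a common simplex of $W$. Consequently the assignments $\sigma \mapsto i_X(\sigma)$ and $\rho \mapsto i_Y(\rho)$ realize $\mathrm{Simp}_k(Z)$ as the desired set pushout.

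With the degreewise identification in hand, I would finish by checking compatibility of the differentials. The inclusions $X \hookrightarrow Z$ and $Y \hookrightarrow Z$ induce chain maps $C^\Delta(X) \to C^\Delta(Z)$ and $C^\Delta(Y) \to C^\Delta(Z)$ agreeing on $C^\Delta(W)$ (because $C^\Delta$ is a functor and the simplicial square commutes), so the universal property of $P_\bullet$ produces a canonical chain map $P_\bullet \to C^\Delta(Z)$. In each degree this is exactly the isomorphism $\mathbb{R}[\text{set pushout}] \xrightarrow{\cong} \mathbb{R}[\mathrm{Simp}_k(Z)]$ from the previous paragraph; since a degreewise isomorphism is an isomorphism of chain complexes, $P_\bullet \cong C^\Delta(Z)$, as required. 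Orientations cause no trouble provided one fixes a single total order on $V_Z$ restricting to compatible orders on the vertices of $X$, $Y$, and $W$, which makes $f_\bullet$, $g_\bullet$, and the inclusions into $C^\Delta(Z)$ sign-preserving on basis simplices.

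The main obstacle I anticipate is precisely the combinatorial claim, and specifically ruling out \emph{coincidental} identifications: two simplices, one of $X$ and one of $Y$, whose vertex images happen to coincide in $V_Z$ without arising from a shared simplex of $W$, would be merged in the ASC $Z$ (which cannot carry two distinct simplices on the same vertex set) while remaining distinct in the set pushout, breaking the degreewise isomorphism. The argument must therefore use that $f$ and $g$ present $W$ as the full overlap of $X$ and $Y$ inside $Z$ — equivalently that the span is also a pullback — which holds for the subcomplex inclusions $\tau' \hookrightarrow \sigma'$ governing the extension maps of the Dowker cosheaf, and is the property on which the clean preservation of the colimit ultimately rests.
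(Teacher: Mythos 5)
Your proof takes a genuinely different route from the paper's. The paper glues first and linearizes second: injectivity makes $W$ a closed subcomplex of $X$ and of $Y$, the Mayer--Vietoris short exact sequence $0 \to C^\Delta(W) \to C^\Delta(X)\oplus C^\Delta(Y)\to C^\Delta(Z)\to 0$ is invoked, and the splitting of short exact sequences of chain complexes of vector spaces is used to count basis elements and conclude. You linearize first: colimits in $\cat{Kom}$ are computed degreewise, each degree of $C^\Delta$ is the free vector space on the set of $k$-simplices (functorial here because all maps in play are injective, so no simplex collapses to zero), and the free functor preserves pushouts, so everything reduces to the combinatorial claim that the $k$-simplices of $Z$ form the set pushout of those of $X$ and $Y$ over those of $W$. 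Your reduction is cleaner in one respect: exactness of the Mayer--Vietoris sequence at its middle and right terms is \emph{already} the statement that $C^\Delta(Z)$ is the cokernel of $(f_\bullet,-g_\bullet)$, i.e.\ the pushout, so the paper's splitting-and-counting step re-derives what the exact sequence asserts, while your version isolates exactly where the simplicial combinatorics enter.

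The obstacle you flag in your last paragraph is not merely a loose end of your write-up: it is a genuine gap in the lemma as stated, and the paper's proof has it too. Injectivity alone is insufficient. Take $W$ to be two isolated vertices $a$ and $b$ (no edge), and $X=Y=[a,b]$ with $f,g$ the vertex inclusions; these are injective simplicial maps. The pushout of this span in $\cat{Asc}$ is the single edge $Z=[a,b]$, since an abstract simplicial complex cannot carry two distinct edges on the same vertex pair (the universal property is easy to check). But the pushout of chain complexes has a two-dimensional degree-one part (one edge generator from $X$, one from $Y$), while $C^\Delta_1(Z)$ is one-dimensional; the two sides even have different homology (that of a circle versus that of a point). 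The paper's proof fails exactly where you predicted: the assertions that $Z=(X\sqcup Y)/W$ ``is simply the adjunction space'' and ``contains exactly one copy of each simplex of $W$'' tacitly assume that the images of $X$ and $Y$ in $Z$ meet in precisely the image of $W$ --- your pullback/full-overlap condition --- and Mayer--Vietoris is valid only under that assumption. In the example, the $\cat{Asc}$ pushout is \emph{not} the topological adjunction space (which would be a circle), and the sequence is not exact.

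Your closing diagnosis also points to the right repair. A clean, non-circular hypothesis is that $f$ and $g$ are isomorphisms onto \emph{full} (induced) subcomplexes: every simplex of $X$ (resp.\ $Y$) all of whose vertices lie in the image of $W$ is itself the image of a $W$-simplex. Since vertices of $X$ and $Y$ become identified in $Z$ only through $W$ (pushouts of injections of sets identify nothing else), fullness rules out coincidental identifications of simplices, and then both your degreewise argument and the paper's Mayer--Vietoris argument go through. This hypothesis does hold where the lemma is actually used: the costalks of the Dowker cosheaf are full simplices $[\sigma']$ and the extension maps are the inclusions $[\tau']\hookrightarrow[\sigma']$, which are full embeddings (any subset of $\sigma'$ contained in $\tau'$ is already a face of $[\tau']$). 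So the downstream corollary about $\cshf{R}_\bullet$ survives, but the lemma itself --- in both your version and the paper's --- needs the strengthened hypothesis you identified.
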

\begin{proof}
  The hypothesis on injectivity ensures that $W$ is a closed subspace of both $X$ and $Y$.
  Therefore, $X \sqcup Y$ is an abstract simplicial complex that contains exactly two copies of each simplex of $W$.
  Moreover, the simplicial colimit $Z = (X \sqcup Y)/W$ is simply the adjunction space formed by gluing $X$ and $Y$ together along $W$.
  It therefore contains exactly one copy of each simplex of $W$.

  Because $C^\Delta$ is left exact, the injectivity of $f$ and $g$ also implies that the induced maps $f_\bullet$ and $g_\bullet$ are also injective.
  The reader will then immediately recognize the Mayer-Vietoris short exact sequence applies\footnote{But not the Mayer-Vietoris long exact sequence for homology, though!}, namely,
  \begin{equation*}
    \xymatrix{
      0 \ar[r] & C^\Delta(W) \ar[rr]^-{(f_\bullet,-g_\bullet)} && C^\Delta(X) \oplus C^\Delta(Y) \ar[r] & C^\Delta(Z) \ar[r] & 0
      }
  \end{equation*}
  Since we are working with chain complexes of vector spaces, this short exact sequence always splits.  That is,
  \begin{equation*}
    C^\Delta(X) \oplus C^\Delta(Y) \cong C^\Delta(W) \oplus C^\Delta(Z).
  \end{equation*}
  Notice that the left side contains basis elements corresponding to two copies of each simplex in $W$.
  Therefore, $C^\Delta(Z)$ contains basis elements for \emph{exactly one copy} of each simplex of $W$, since the other copies are all in $C^\Delta(W)$.
  We therefore conclude that the colimit of the chain complex diagram is $C^\Delta(Z)$.
\end{proof}

\begin{corollary}
  Applying $C^\Delta$ to each space of local cosections of $\cshf{R} = CoShvRep(G,M,I)$ yields a well-defined cosheaf
  $\cshf{R}_\bullet := C^\Delta \circ CoShvRep(G,M,I)$ of chain complexes on the Dowker complex $D(G,M,I)$.
\end{corollary}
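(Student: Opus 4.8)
The plan is to verify the two layers of the cosheaf definition in turn: first the minimal poset-level specification of Definition \ref{def:cosheaf}, and then the cogluing that promotes it to a genuine cosheaf on the Alexandrov space, with the preceding Lemma supplying the one nontrivial ingredient.

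First I would dispatch the poset-level data by pure functoriality of $C^\Delta : \cat{Asc} \to \cat{Kom}$. Each costalk $\cshf{R}(\sigma)$ is an abstract simplicial complex, so $C^\Delta(\cshf{R}(\sigma))$ is an object of $\cat{Kom}$, i.e.\ a chain complex. Each extension $\cshf{R}(\sigma \subseteq \tau) : \cshf{R}(\tau) \to \cshf{R}(\sigma)$ is a simplicial map, so $C^\Delta(\cshf{R}(\sigma \subseteq \tau))$ is a chain map. Since $C^\Delta$ respects composition, the cosheaf composition law $\cshf{R}_\bullet(\sigma \subseteq \rho) = \cshf{R}_\bullet(\sigma \subseteq \tau) \circ \cshf{R}_\bullet(\tau \subseteq \rho)$ for $\sigma \subseteq \tau \subseteq \rho$ transports directly from the corresponding law for $\cshf{R}$. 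Thus $\cshf{R}_\bullet = C^\Delta \circ \cshf{R}$ is a cosheaf on the face poset valued in $\cat{Kom}$, and by the cited Curry result (Cor.\ 2.5.21) it determines a cosheaf on the Alexandrov space.

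The real content, and the reason the word ``well-defined'' is doing work, is that this Alexandrov cosheaf must compute correctly: its value on a general open set is a colimit of costalks, and this colimit must coincide with applying $C^\Delta$ to the corresponding colimit of $\cshf{R}$. Here I would use the key structural fact that every extension map of $\cshf{R}$ is the simplicial map induced by the vertex inclusion $\tau' \hookrightarrow \sigma'$ (valid because $\sigma \subseteq \tau$ forces $\tau' \subseteq \sigma'$), and is therefore \emph{injective}. Consequently the cogluing colimits of $\cshf{R}$ over the finite face poset decompose into iterated pushouts of spans of injective simplicial maps, which is exactly the configuration governed by the preceding Lemma: it states that $C^\Delta$ sends such a pushout of abstract simplicial complexes to the corresponding pushout of chain complexes. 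Applying the Lemma at each pushout in the decomposition shows $C^\Delta$ commutes with the cogluing, so $\cshf{R}_\bullet(U) \cong C^\Delta(\cshf{R}(U))$ on every open $U$ and the cogluing axiom holds in $\cat{Kom}$.

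The hard part is precisely this colimit-preservation, which fails for $C^\Delta$ in general; but the preceding Lemma resolves it for the injective spans that are the only ones occurring in the Dowker cosheaf, so the corollary reduces to assembling that Lemma with functoriality. The only remaining care is to observe that, because the context is finite, all colimits involved are finite and break into the binary injective-span pushouts the Lemma covers, allowing a straightforward induction over the open sets of the Alexandrov topology.
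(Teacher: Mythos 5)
Your proposal is correct and follows essentially the same route as the paper, which states this corollary as an immediate consequence of the preceding Lemma: functoriality of $C^\Delta$ handles the poset-level data, and the Lemma on pushouts of injective simplicial spans supplies the colimit preservation needed for cogluing, since the extension maps of $\cshf{R}$ are the inclusions induced by $\tau' \hookrightarrow \sigma'$. You simply spell out the induction over the finite Alexandrov open sets that the paper leaves implicit.
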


From the previous sections, this means we can simply compose functors to transform a concept lattice into a context and then into this new cosheaf.

\begin{theorem}
  \label{thm:concept_iso_cosheaf_of}
  The Dowker cosheaf $\cshf{R}_\bullet = C^\Delta \circ CoShvRep(G,M,I)$ of chain complexes is a concept lattice isomorphism invariant.
  Namely, it is a functor $\cat{ConLat} \to \cat{CoShvKom}$
\end{theorem}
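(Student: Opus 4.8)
The plan is to realize $\cshf{R}_\bullet$ as a composite of functors that are already in hand, so that functoriality follows formally, and then to isolate the one genuinely new verification: that applying $C^\Delta$ to each space of local cosections is functorial on the morphisms of $\cat{CoShvAsc}$. By Corollary \ref{cor:context_concept} we have $\cat{ConLat} \cong \cat{Ctx}$, by Proposition \ref{prop:ctx_sub_rel} the category $\cat{Ctx}$ sits inside $\cat{Rel}$, and by \cite{Robinson_relations} the assignment $CoShvRep : \cat{Rel} \to \cat{CoShvAsc}$ is a functor; composing these three gives a functor $\cat{ConLat} \to \cat{CoShvAsc}$ sending a concept lattice to its Dowker cosheaf $\cshf{R}$. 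The object half of the theorem is then precisely the content of the preceding Corollary: fiberwise application of $C^\Delta$ turns $\cshf{R}$ into a well-defined cosheaf $\cshf{R}_\bullet$ of chain complexes on $D(G,M,I)$, i.e.\ an object of $\cat{CoShvKom}$.

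First I would define the action on morphisms. A morphism of $\cat{ConLat}$ is by definition a $\cat{Ctx}$-morphism, hence a $\cat{Rel}$-morphism, which $CoShvRep$ carries to a morphism in $\cat{CoShvAsc}$; such a morphism consists of a simplicial base-space map together with a simplicial fiber map on each space of local cosections, compatible with the cosheaf extension maps. I would keep the base-space map unchanged---it is already simplicial, as $\cat{CoShvKom}$ requires---and apply $C^\Delta$ to every fiber map. Since $C^\Delta : \cat{Asc} \to \cat{Kom}$ is a covariant functor, each simplicial fiber map becomes a chain map, exactly the condition in Definition \ref{def:coshvkom}. The compatibility between fiber maps and extension maps in a $\cat{CoShvAsc}$-morphism is recorded as a family of commuting squares in $\cat{Asc}$; because $C^\Delta$ preserves commuting diagrams, these become commuting squares in $\cat{Kom}$, so the transformed fiber maps remain compatible with the extension maps $C^\Delta(\cshf{R}(\sigma \subseteq \tau))$ of $\cshf{R}_\bullet$. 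Hence the image is a legitimate $\cat{CoShvKom}$-morphism. Preservation of identities and composition is then immediate, since every functor in the chain respects them and $C^\Delta(g_\bullet \circ f_\bullet) = C^\Delta(g_\bullet) \circ C^\Delta(f_\bullet)$. Finally, because functors preserve isomorphisms, isomorphic concept lattices produce isomorphic cosheaves of chain complexes, which is the asserted invariance.

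I expect the only real obstacle to be bookkeeping around the extension maps: one must confirm that the fiberwise $C^\Delta$ is compatible with the cosheaf's \emph{gluing} data, and not merely with the underlying face-poset diagram. This is exactly what the preceding colimit-preservation Lemma guarantees, and once it is invoked the remainder of the argument rests on nothing more than the functoriality of $C^\Delta$ and of the composite $CoShvRep$.
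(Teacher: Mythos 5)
Your proposal is correct and follows essentially the same route as the paper, which proves the theorem simply by composing the functors $\cat{ConLat} \cong \cat{Ctx} \hookrightarrow \cat{Rel} \xrightarrow{CoShvRep} \cat{CoShvAsc}$ with the fiberwise application of $C^\Delta$, justified by the preceding colimit-preservation lemma and its corollary. In fact your write-up is more explicit than the paper's one-sentence argument, since you spell out the action on morphisms (base map unchanged, $C^\Delta$ applied to fiber maps, compatibility squares preserved), which the paper leaves implicit.
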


The remainder of this section explores properties of this new cosheaf of chain complexes.

The cosheaf chain complex for $\cshf{R}_\bullet = C^\Delta \circ CoShvRep(G,M,I)$ is a double complex in $\cat{Vec}$, that is, it is a chain complex of chain complexes.
It has the structure of a grid of vector spaces
\begin{equation*}
  \xymatrix{
                  & \vdots \ar[d] & \vdots \ar[d] & \vdots \ar[d] \\
    \dotsb \ar[r] & \bigoplus_{\sigma \in \left(D(G,M,I)\right)^2} \left(\cshf{R}(\sigma)\right)_2 \ar[r] \ar[d] & \bigoplus_{\sigma \in \left(D(G,M,I)\right)^2} \left(\cshf{R}(\sigma)\right)_1 \ar[r] \ar[d] & \bigoplus_{\sigma \in \left(D(G,M,I)\right)^2} \left(\cshf{R}(\sigma)\right)_0 \ar[d] \\
    \dotsb \ar[r] & \bigoplus_{\sigma \in \left(D(G,M,I)\right)^1} \left(\cshf{R}(\sigma)\right)_2 \ar[r] \ar[d] & \bigoplus_{\sigma \in \left(D(G,M,I)\right)^1} \left(\cshf{R}(\sigma)\right)_1 \ar[r] \ar[d] & \bigoplus_{\sigma \in \left(D(G,M,I)\right)^1} \left(\cshf{R}(\sigma)\right)_0 \ar[d] \\
    \dotsb \ar[r] & \bigoplus_{\sigma \in \left(D(G,M,I)\right)^0} \left(\cshf{R}(\sigma)\right)_2 \ar[r] & \bigoplus_{\sigma \in \left(D(G,M,I)\right)^0} \left(\cshf{R}(\sigma)\right)_1 \ar[r]  & \bigoplus_{\sigma \in \left(D(G,M,I)\right)^0} \left(\cshf{R}(\sigma)\right)_0 \\
    }
\end{equation*}
in which the rows are indexed by the dimension of the simplices of $D(G,M,I)$,
and the columns are indexed by the chain complexes within each costalk of $\cshf{R}_\bullet$,
which themselves are indexed by the dimensions of $D(M,G,I^T)$.

\begin{corollary}
  The last column of the above diagram is isomorphic to the cosheaf chain complex of $\cshf{R}^0$,
  and hence (according to duality and Proposition \ref{prop:r0_h}) has nontrivial homology only in index $0$, where it takes dimension $\#M$.
\end{corollary}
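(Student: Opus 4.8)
The plan is to recognize the rightmost column, together with its downward (cosheaf) differentials, as literally the cellular cosheaf chain complex of $\cshf{R}^0$, and then to compute its homology by dualizing the Dowker sheaf calculation already carried out in Proposition \ref{prop:r0_h}.

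First I would pin down the entries. For each simplex $\sigma \in D(G,M,I)$ the costalk $\cshf{R}(\sigma) = D(\sigma',\sigma,(I^T|_{\sigma,\sigma'}))$ is an abstract simplicial complex whose vertex set is exactly $\sigma'$. Hence its degree-zero simplicial chain group is $(\cshf{R}(\sigma))_0 = \vspan \sigma'$, which is precisely $C^\Delta$ applied to the $0$-dimensional complex $\cshf{R}^0(\sigma) = \sigma'$. Moreover, the vertical maps of the double complex are the degree-zero parts of the induced chain maps $C^\Delta(\cshf{R}(\sigma \subseteq \tau))$; since a simplicial map acts on vertices, this degree-zero part is the induced map of the vertex inclusion $\tau' \hookrightarrow \sigma'$, which is exactly the extension map of $\cshf{R}^0$ after applying $C^\Delta$. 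Therefore the last column, read vertically, is the cosheaf chain complex of $\cshf{R}^0$, proving the first assertion.

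It remains to compute the homology of this complex, and here I would use the duality between $\cshf{R}^0$ and the Dowker sheaf $\shf{S}$. The two share the same fibers, $\cshf{R}^0(\sigma) = \vspan \sigma' = \shf{S}(\sigma)$; the extension maps of $\cshf{R}^0$ are the inclusions $\vspan \tau' \hookrightarrow \vspan \sigma'$ for $\sigma \subseteq \tau$, while the restriction maps of $\shf{S}$ are the projections $\vspan \sigma' \to \vspan \tau'$, and these are mutually transpose linear maps. Consequently the cellular cosheaf chain complex of $\cshf{R}^0$ is, degreewise, the linear dual (transpose, up to the usual orientation signs) of the cellular sheaf cochain complex of $\shf{S}$. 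For complexes of finite-dimensional vector spaces the rank of a map equals the rank of its transpose, so $\dim H_p(\cshf{R}^0) = \dim H^p(\shf{S})$ for every $p$. Restricting to a total context $(G,M,I)\in\cat{Ctx}_+$ and invoking Proposition \ref{prop:r0_h}, which gives $\dim H^0(\shf{S}) = \#M$ and $H^k(\shf{S}) = 0$ for $k>0$, we conclude that the last column has homology of dimension $\#M$ in degree $0$ and vanishes otherwise.

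The main obstacle I anticipate is the bookkeeping in the second step: verifying carefully that the degree-zero induced maps of the extension simplicial maps coincide with the $\cshf{R}^0$ extensions, and that the orientation and sign conventions of the cellular cosheaf differential are precisely the transposes of the cellular sheaf coboundary, so that the transpose identification of complexes (and hence the equality of Betti numbers) is genuinely valid rather than merely plausible. A secondary point worth stating explicitly is that the homology statement is only claimed for total contexts, since Proposition \ref{prop:r0_h} is established under that hypothesis.
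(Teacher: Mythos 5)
Your proposal is correct and follows exactly the route the paper intends: the paper leaves this corollary unproved, citing only ``duality and Proposition \ref{prop:r0_h}'', and your two steps---identifying the rightmost column degreewise with $C^\Delta$ of $\cshf{R}^0$ via $(\cshf{R}(\sigma))_0 = \vspan\sigma'$, then transposing to the Dowker sheaf cochain complex and matching Betti numbers---are precisely the argument the paper sketches in the surrounding text about arrows reversing under matrix transposition. Your explicit remark that the dimension count requires the total-context hypothesis of Proposition \ref{prop:r0_h} is a fair and worthwhile precision that the paper's statement omits.
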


Because we work over vector spaces, vector space duality ensures that we can obtain a sheaf of cochain complexes on the (same) Dowker complex as well.  
In short, the structure of the diagram remains the same, but all the arrows reverse due to matrix transposition.
This explains why $H^\bullet(\shf{S})$ is lossy.
It only considers the last column of the grid diagram.

\begin{lemma}
\label{lem:chain_homology}
  Each column of the above diagram is a chain complex which has nontrivial homology only in index $0$.
\end{lemma}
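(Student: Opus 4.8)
The plan is to fix a column index $k$, recognize the $k$th column as the cellular cosheaf chain complex of the cosheaf $\sigma \mapsto C^\Delta_k(\cshf{R}(\sigma))$ on $D(G,M,I)$, and then split this complex as a direct sum indexed by the $k$-simplices of the dual Dowker complex, each summand being the simplicial chain complex of a contractible full simplex.

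First I would record the shape of the costalks. By (the proof of) Proposition \ref{prop:dowker_concepts_1}, for each simplex $\sigma$ of $D(G,M,I)$ the costalk $\cshf{R}(\sigma) = D(\sigma',\sigma,(I^T|_{\sigma,\sigma'}))$ is the full simplex $[\sigma']$ on the attribute set $\sigma'$, so its degree-$k$ chains $C^\Delta_k([\sigma'])$ are freely spanned by the $(k+1)$-element subsets $\alpha \sub \sigma'$. Because the extension $\cshf{R}(\sigma \subseteq \tau)$ is the simplicial map induced by the inclusion $\tau' \hookrightarrow \sigma'$, applying the covariant functor $C^\Delta$ yields the basis-preserving inclusion $C^\Delta_k([\tau']) \hookrightarrow C^\Delta_k([\sigma'])$ carrying a subset $\alpha \sub \tau'$ to the same subset $\alpha \sub \sigma'$.

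Next I would pass to a bi-indexed basis. A basis vector of the $k$th column in row $n$ is a pair $(\sigma,\alpha)$ with $\sigma$ an $n$-simplex of $D(G,M,I)$ and $\alpha$ a $(k+1)$-subset of $\sigma'$; the condition $\alpha \sub \sigma'$ is equivalent to $\sigma \sub \alpha'$. The vertical (cosheaf) differential sends $(\sigma,\alpha) \mapsto \sum_{\tau} \pm(\tau,\alpha)$, where $\tau$ runs over the codimension-one faces of $\sigma$ with the usual simplicial sign; crucially, by the previous paragraph the label $\alpha$ is left untouched, and $\alpha \sub \sigma' \sub \tau'$ guarantees each $(\tau,\alpha)$ is again a valid basis vector. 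Hence the whole column splits as $\bigoplus_\alpha K^\alpha_\bullet$, the sum taken over all $(k+1)$-subsets $\alpha \sub M$ with $\alpha' \neq \emptyset$ (equivalently, the $k$-simplices of $D(M,G,I^T)$), where $K^\alpha_\bullet$ is spanned by those $\sigma$ with $\emptyset \neq \sigma \sub \alpha'$ and carries exactly the object-side simplicial boundary. Thus $K^\alpha_\bullet$ is precisely the (unaugmented) simplicial chain complex of the full simplex $[\alpha']$ on the objects of $\alpha'$.

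Finally I would invoke contractibility: a full simplex is contractible, so $H_0(K^\alpha_\bullet) \cong \reals$ and $H_j(K^\alpha_\bullet) = 0$ for $j>0$. Summing over $\alpha$ shows the $k$th column has homology concentrated in index $0$, of dimension equal to the number of $k$-simplices of $D(M,G,I^T)$, which specializes at $k=0$ to the preceding corollary. The one step demanding care---and the main obstacle---is verifying that the vertical differential is exactly the cellular cosheaf boundary and that it preserves the attribute label $\alpha$; this is where the explicit description of the extensions as inclusions $\tau' \hookrightarrow \sigma'$ and the functoriality of $C^\Delta$ on those inclusions must be used. Once the label is seen to be preserved, the decomposition and contractibility argument are routine, and they in fact expose the object--attribute symmetry underlying Dowker's theorem.
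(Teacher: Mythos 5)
Your proof is correct, but it takes a genuinely different route from the paper's. The paper dualizes each column using vector-space duality and argues that the dualized maps are surjective ``by essentially the same reasoning as in Lemma \ref{lem:r0_hk}''; the column then computes the cohomology of a flabby sheaf, and acyclicity follows from the machinery of Lemmas \ref{lem:cech} and \ref{lem:flabby}. You instead stay on the cosheaf side and split the $k$th column explicitly: since the extensions of $\cshf{R}$ are the inclusions $\tau' \hookrightarrow \sigma'$ and $C^\Delta$ turns these into label-preserving inclusions of chain groups, the column decomposes as $\bigoplus_\alpha K^\alpha_\bullet$ over the $k$-simplices $\alpha$ of $D(M,G,I^T)$, where each $K^\alpha_\bullet$ is the chain complex of the full---hence contractible---simplex $[\alpha']$ (using that any nonempty $\sigma \sub \alpha'$ is automatically a simplex of $D(G,M,I)$, witnessed by any $m \in \alpha$). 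Both arguments pivot on the same combinatorial fact, namely that the vertical differential never disturbs the attribute label $\alpha$; this is what the paper's last sentence (``each such simplex is also a simplex of $(\cshf{R}(\tau))_k$ for every face $\tau \sub \sigma$'') is gesturing at. But they package that fact differently: your decomposition is more elementary, using no sheaf-cohomology machinery, and it yields strictly more information, since it computes $H_0$ of the $k$th column as the span of the $k$-simplices of $D(M,G,I^T)$---exactly what the subsequent proposition on the zeroth cosheaf homology requires---whereas the paper's route is shorter on the page because it reuses the flabbiness lemmas already established for the Dowker sheaf.
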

\begin{proof}
  Dualize the vertical maps using vector space duality.
  These are each surjective by essentially the same reasoning in as in Lemma \ref{lem:r0_hk}.
  That is, for $\sigma \in D(G,M,I)$,
  each generator of each space $\left(\cshf{R}(\sigma)\right)_k$ corresponds to a simplex of $D(M,G,I^T)$ of dimension $k$.

  More explicitly, surjectivity is a consequence of the fact that the costalks of $\cshf{R}$ are individual simplices of the dual Dowker complex, since for a context $(G,M,I)$, the costalks are precisely the concepts of $\setsys{B}_M$.
As a result, each costalk of $\cshf{R}_\bullet$ is an acyclic chain complex; it can only have nontrivial homology in index $0$.
  If $k>0$, then each such simplex is also a simplex of $\left(\cshf{R}(\tau)\right)_k$ for every face $\tau \subseteq \sigma$.  
\end{proof}

The reader is cautioned that row-wise homology may differ from column-wise homology!

\begin{proposition}
  The zeroth cosheaf homology of the cosheaf of chain complexes 
  \begin{equation*}
  \cshf{R}_\bullet = C^\Delta \circ CoShvRep(G,M,I)
  \end{equation*}
  is the simplicial chain complex of the Dowker complex $D(M,G,I^T)$.  
\end{proposition}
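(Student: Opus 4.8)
The plan is to compute the zeroth cosheaf homology directly in each internal chain degree and to recognize the result, together with its induced differential, as the simplicial chain complex of the dual Dowker complex $D(M,G,I^T)$. The key structural fact I would record first is the one already exploited in the proof of Proposition~\ref{prop:dowker_concepts_1}: for each simplex $\sigma \in D(G,M,I)$ the costalk $\cshf{R}(\sigma) = D(\sigma',\sigma,(I^T|_{\sigma,\sigma'}))$ is the full simplex $[\sigma']$ on vertex set $\sigma'$, since every attribute in $\sigma'$ is $I$-related to every object of $\sigma$. Applying $C^\Delta$ then gives $(\cshf{R}(\sigma))_n = \vspan\{S \sub \sigma' : |S| = n+1\}$, and in each degree the extension map $\cshf{R}(\sigma\subseteq\tau)$ becomes the inclusion of $(n+1)$-subsets induced by $\tau' \hookrightarrow \sigma'$.

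Next I would compute $H_0$ of the cosheaf chain complex in a fixed internal degree $n$, i.e.\ the cokernel of $\partial_1 : \bigoplus_{\dim\sigma=1}(\cshf{R}(\sigma))_n \to \bigoplus_{\dim\sigma=0}(\cshf{R}(\sigma))_n$. A basis element of the degree-$0$ term is a pair $(g,S)$ with $g \in G$ and $S$ an $(n+1)$-subset of $\{g\}'$; because the extensions are inclusions, $\partial_1$ imposes the relation $(g_1,S)\sim(g_2,S)$ exactly when $S\subseteq\{g_1,g_2\}'$. For a fixed $(n+1)$-subset $S$ the objects $g$ with $S\subseteq\{g\}'$ are precisely those of $S'$, and any two of them are directly related since $S \subseteq \{g_1,g_2\}'$ holds iff $g_1,g_2 \in S'$. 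Hence all representatives of $S$ collapse to a single class whenever $S'\neq\emptyset$, and $S$ contributes nothing otherwise. Since an $(n+1)$-subset $S\subseteq M$ with $S'\neq\emptyset$ is exactly an $n$-simplex of $D(M,G,I^T)$, I obtain a vector space isomorphism $H_0\big((\cshf{R}_\bullet)_n\big)\cong C^\Delta_n(D(M,G,I^T))$; at $n=0$ this recovers dimension $\#M$, consistent with the preceding Corollary.

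Finally I would verify that this degree-wise isomorphism is a chain isomorphism for the induced horizontal differential. Since the horizontal maps of the double complex commute with its vertical (cosheaf) maps, the internal simplicial boundary of each full simplex $[\sigma']$ descends to a well-defined map on $H_0$; under the identification above it sends the class of $S$ to the alternating sum of the classes of its $n$-subsets, each of which again has nonempty polar because $T\subset S$ forces $T'\supseteq S'\neq\emptyset$. This is precisely the simplicial boundary operator of $D(M,G,I^T)$, so the induced differential matches and the isomorphism is one of chain complexes.

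I expect the main obstacle to lie in the middle step: verifying carefully that the cosheaf coequalizer collapses all representatives $(g,S)$ with $g\in S'$ to a single class (the connectivity argument) and, more delicately, that the induced horizontal differential carries the correct signs so that it is literally the simplicial boundary of $D(M,G,I^T)$ and not merely equal to it up to sign. Lemma~\ref{lem:chain_homology} plays a supporting role, guaranteeing that the higher cosheaf homologies vanish, so that passing to $H_0$ loses no information and the displayed grid genuinely collapses to this single chain complex.
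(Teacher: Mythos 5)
Your proposal is correct, but it proves the statement by a genuinely different route than the paper does. The paper's own proof is essentially two lines of category theory: the zeroth cosheaf homology of a cosheaf is its space of global cosections (the colimit over the base), the preceding lemma shows (via the Mayer--Vietoris splitting argument) that $C^\Delta$ preserves exactly the colimits appearing in the Dowker cosheaf, and the space of global cosections of $CoShvRep(G,M,I)$ is already known to be $D(M,G,I^T)$; composing these facts gives the result, with the chain-level identification dismissed as ``a little bookkeeping.'' You instead compute the coequalizer by hand in each internal degree: you identify the degree-zero generators as pairs $(g,S)$ with $S \subseteq \{g\}'$, observe that the relations imposed by $\partial_1$ identify $(g_1,S)$ with $(g_2,S)$ precisely when $g_1,g_2 \in S'$ (and that the relation graph on $S'$ is in fact complete, since $S \subseteq \{g_1,g_2\}'$ is automatic and $S \neq \emptyset$ guarantees $\{g_1,g_2\}$ is a simplex of the base), so each $S$ with $S' \neq \emptyset$ contributes exactly one class, giving $H_0\bigl((\cshf{R}_\bullet)_n\bigr) \cong C^\Delta_n(D(M,G,I^T))$. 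Your explicit check that the induced horizontal differential descends to the simplicial boundary --- including the sign issue, which is resolved because every extension map is an inclusion of subsets of $M$, so a single global ordering of $M$ makes all internal boundaries consistent --- is precisely the ``bookkeeping'' the paper omits, and is arguably the most valuable part of your writeup. What each approach buys: yours is self-contained (it needs neither the colimit-preservation lemma nor the external identification of global cosections from the cited work) and produces the isomorphism concretely at the chain level; the paper's is shorter and exposes the conceptual mechanism, namely that $H_0$ is a colimit and $C^\Delta$ commutes with the relevant colimits. One small caveat: your closing appeal to Lemma~\ref{lem:chain_homology} (vanishing of higher cosheaf homology) is not needed for the proposition itself, which concerns only $H_0$; it is fine as a remark but should not be presented as load-bearing.
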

\begin{proof}
  Notice that the object of interest is the last row of column-wise homology of the grid diagram.
  Since $C^\Delta \circ CoShvRep(G,M,I)$ preserves colimits,
  and the space of global cosections (i.e.\ the colimit) of $CoShvRep(G,M,I)$ is $D(M,G,I^T)$, the result follows easily from a little bookkeeping.
\end{proof}

\begin{corollary}
\label{cor:cosheaf_of_generalizes}
  The homology of the zeroth cosheaf homology of the cosheaf of chain complexes $\cshf{R}_\bullet = C^\Delta \circ CoShvRep(G,M,I)$ is the simplicial homology of the Dowker complex $D(M,G,I^T)$.
\end{corollary}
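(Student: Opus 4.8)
The plan is to obtain this statement as a formal consequence of the immediately preceding Proposition together with the definition of simplicial homology; as a corollary, it carries essentially no content beyond bookkeeping. The preceding Proposition has already identified the zeroth cosheaf homology of $\cshf{R}_\bullet = C^\Delta \circ CoShvRep(G,M,I)$ with the simplicial chain complex $C^\Delta(D(M,G,I^T))$. The essential point I would emphasize is that this identification is an isomorphism \emph{of chain complexes} in $\cat{Kom}$, and not merely a degreewise matching of vector spaces: after collapsing the vertical (cosheaf) direction of the grid diagram via $H_0^{\mathrm{cosheaf}}$, the surviving horizontal differentials must be shown to coincide with the simplicial boundary operators of $D(M,G,I^T)$. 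Granting this, the corollary is immediate, since by definition the homology of the simplicial chain complex $C^\Delta(X)$ of any abstract simplicial complex $X$ is the simplicial homology $H_\bullet(X)$.

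Concretely, I would proceed in two short steps. First, invoke the preceding Proposition to write $H_0^{\mathrm{cosheaf}}(\cshf{R}_\bullet) \cong C^\Delta(D(M,G,I^T))$ as objects of $\cat{Kom}$, i.e.\ as chain complexes with compatible differentials. Second, apply $H_\bullet(-)$ to both sides: the right-hand side is by definition the simplicial homology $H_\bullet(D(M,G,I^T))$, and homology is a functor on $\cat{Kom}$, so an isomorphism of chain complexes induces an isomorphism on homology. Composing these two steps yields that the homology of the zeroth cosheaf homology is $H_\bullet(D(M,G,I^T))$, as claimed.

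The only point requiring genuine care — and the closest thing to an obstacle — is confirming that the chain isomorphism furnished by the preceding Proposition respects differentials rather than matching terms degreewise. This reduces to checking that, under $H_0^{\mathrm{cosheaf}}$, the maps induced between adjacent columns of the grid diagram are exactly the simplicial face maps of $D(M,G,I^T)$. Because $C^\Delta \circ CoShvRep(G,M,I)$ preserves the relevant pushout colimits (established in the computational Lemma above), and the space of global cosections of $CoShvRep(G,M,I)$ is $D(M,G,I^T)$, this compatibility is already built into the construction used to prove the preceding Proposition. Once it is recorded, the corollary follows with no further computation, and in particular recovers Dowker's classical homological symmetry as the degenerate case in which the costalks carry no higher-dimensional structure.
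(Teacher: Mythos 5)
Your proposal is correct and follows the paper's route exactly: the paper treats this corollary as an immediate consequence of the preceding Proposition, which already identifies the zeroth cosheaf homology with the simplicial chain complex $C^\Delta(D(M,G,I^T))$ as an object of $\cat{Kom}$ (differentials included, via the colimit-preservation lemma), so applying the homology functor yields the claim. Your added emphasis that the identification must respect differentials is precisely the ``little bookkeeping'' the paper's Proposition proof alludes to, not a departure from it.
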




\section*{Acknowledgements}

Information release PNNL-SA-210092.

Pacific Northwest National Laboratory is a multiprogram national laboratory operated for the US Department of Energy (DOE) by Battelle Memorial Institute under Contract No. DE-AC05-76RL01830. 

Robinson was partially supported by the Defense Advanced Research Projects Agency (DARPA) SafeDocs program under contract HR001119C0072.
Any opinions, findings and conclusions or recommendations expressed in this material are those of the authors and do not necessarily reflect the views of DARPA.

Thanks to Emilie Purvine at PNNL for an excellent peer review.

\bibliographystyle{plain}

\bibliography{./reference}

\end{document}